\newtheoremstyle{theorem}{5pt}{5pt}{\itshape}{}{\bfseries}{.}{.5em}{}
\theoremstyle{theorem}  
\newtheorem{theorem}{Theorem}           
\newtheorem{lemma}[theorem]{Lemma}
\newtheorem{corollary}[theorem]{Corollary}
\newtheorem{remark}[theorem]{Remark}
\titlespacing*{\section}{0pt}{3.5ex plus 1ex minus .2ex}{2.3ex plus .2ex}
\titlespacing*{\subsection}{0pt}{3.5ex plus 1ex minus .2ex}{2.3ex plus .2ex}
\def\Ddots{\mathinner{\mkern1mu\raise\p@
\vbox{\kern7\p@\hbox{.}}\mkern2mu
\raise4\p@\hbox{.}\mkern2mu\raise7\p@\hbox{.}\mkern1mu}}
\begin{document}  

\title{On Signs of Fourier Coefficients of Hecke-Maass Cusp Forms on $\mathrm{GL}_3$}
\author{Jesse J\"a\"asaari}
\address[]{School of Mathematical Sciences, Queen Mary University of London, London E1 4NS, UK}
\email{j.jaasaari@qmul.ac.uk}
\date{}
\maketitle

\begin{abstract}
We consider sign changes of Fourier coefficients of Hecke-Maass cusp forms for the group $\mathrm{SL}_3(\mathbb Z)$. When the underlying form is self-dual, we show that there are $\gg_\varepsilon X^{5/6-\varepsilon}$ sign changes among the coefficients $\{A(m,1)\}_{m\leq X}$ and that there is a positive proportion of sign changes for many self-dual forms. Similar result concerning the positive proportion of sign changes also hold for the real-valued coefficients $A(m,m)$ for generic $\mathrm{GL}_3$ cusp forms, a result which is based on a new effective Sato-Tate type theorem for a family of $\mathrm{GL}_3$ cusp forms we establish. In addition, non-vanishing of the Fourier coefficients is studied under the Ramanujan-Petersson conjecture.  
\end{abstract}

\section{Introduction}

Signs of Fourier coefficients of automorphic forms have been connected to classical problems in number theory and their study has attracted a lot of interest recently. For example, in \cite{Kowalski-Lau-Soundararajan-Wu} the authors related the question of finding the first sign change among the Fourier coefficients of a classical modular form to the question of finding the least quadratic non-residue modulo $d$, which has historically been a central problem in analytic number theory. More precisely, the connection is the following. Automorphic forms can be thought of as higher rank analogues of Dirichlet characters and it is natural to study whether properties of lower rank objects generalise to the higher rank setting. The problem of finding the least quadratic non-residue modulo $d$ concerns finding the least natural number $m$ so that $\chi_d(m)=-1$, where $\chi_d$ is the real Dirichlet character modulo $d$. It is not obvious how to formulate an analogue for this in the higher rank setting. Namely, the real Dirichlet character $\chi_d$ attains only two non-zero values ($+1$ and $-1$) and these (non-zero) values are distributed similarly as are independent random variables taking values $\pm 1$ with equal probability. Contrast this to Hecke eigenvalues (which match to certain Fourier coefficients when the underlying form is properly normalised) of Maass cusp forms for the group $\mathrm{SL}_n(\mathbb Z)$, which take more than two non-zero values and are (conjecturally in the higher rank setting) distributed according to the (generalised) Sato-Tate measure. Despite these differences, the authors were able to retain the analogy when considering signs of the Hecke eigenvalues instead of their values. In particular, the analogue for finding the least quadratic non-residue is to find the first negative Hecke eigenvalue. 

In addition to the work discussed above, the signs have been studied from many different points of view for both classical modular forms and their generalisations (see e.g. \cite{Kohnen-Lau-Shparlinski, Matomaki, Lau-Wu, Lau-Royer-Wu, Lester-Radziwill}). Besides finding the first negative Fourier coefficient, one can estimate the number of forms (in some family) for which the signs of the Fourier coefficients at primes lying in certain region are specified. Yet another interesting question is whether a primitive cusp form is determined uniquely by the sequence of signs of its Fourier coefficients at primes. Finally, one may ask how many sign changes does the sequence of Fourier coefficients have up to some point? For more complete survey on these topics and related results, see \cite{Lau-Liu-Wu2}. The last question is the one we study in the present article. We do not consider the situation where for instance $\lambda(m)>0$ and $\lambda(m+1)=0$ to be a sign change, because the sign of zero is undefined. Thus, by number of sign changes of a possibly vanishing sequence $\{\lambda(m)\}_m$ we mean the number of sign changes of $\lambda(m)$ on the subset of $m$ for which $\lambda(m)\neq 0$. 

For classical Hecke cusp forms (rank two case) Matom\"aki and Radziwi{\l\l} \cite{Matomaki-Radziwill1} have shown that there is a positive proportion of sign changes among the Fourier coefficients. Related to this, our aim is to establish a non-trivial lower bound for the number of sign changes for higher rank cusp forms. Specifically we will consider Hecke-Maass cusp forms for the group $\mathrm{SL}_3(\mathbb Z)$ (rank three case). Such forms admit a Fourier expansion \cite[Equation 6.2.1]{Goldfeld} with Fourier coefficients $A(m,n)$ indexed by pairs of integers. The most interesting among these coefficients are those of the form $A(m,1)$ as these are the Hecke eigenvalues when the underlying form is normalised so that $A(1,1)=1$. There is a subtle point that the coefficients $A(m,n)$ are not generally real-valued. Fortunately they are so in special instances, and thus the sign change problem is well-defined when this happens. What is relevant for the present paper is that the coefficients $A(m,1)$ are real-valued when the underlying form is self-dual and that the coefficients $A(m,m)$ are real-valued for any Hecke-Maass cusp form.

Signs of the Fourier coefficients of these higher rank Hecke-Maass cusp forms have been studied in recent years. In the general $\mathrm{GL}_n$-setting (where the Fourier coefficients are parametri-
sed by $(n-1)$-tuples of integers) finding the first negative Fourier coefficient for self-dual forms was considered by Liu, Qu, and Wu \cite{Liu-Qu-Wu}. For the group $\mathrm{SL}_3(\mathbb Z)$, Steiger \cite{Steiger} considered signs of the real part of $A(p,1)$ and proved (with his vertical Sato-Tate law) that there is a positive density of Hecke-Maass cusp forms $\phi$ such that the real part of\footnote{Sometimes we emphasise that $A(m,n)$ is a Fourier coefficient of the form $\phi$ by writing $A_\phi(m,n)$.} $A_\phi(p,1)$ is positive for all primes $p$ in some finite set. After that Xiao and Xu \cite{Xiao-Xu} considered the number of Hecke-Maass cusp forms $\phi$ such that $A_{\phi}(p,p)$ has a given sign for any prime $p$ lying in certain region. The latter result has since been generalised by Lau, Ng, Royer, and Wang \cite{Lau-Ng-Royer-Wang} for the group $\mathrm{SL}_n(\mathbb Z)$ with $n\geq 4$. In addition, the problem of finding Fourier coefficients of the same sign from short intervals was considered by Liu and Wu \cite{Liu-Wu} in the higher rank setting. 

In this article we are studying sign changes among the Fourier coefficients of Hecke-Maass cusp forms of the form $A(m,1)$ and $A(m,m)$. For self-dual forms it was first noticed by Qu \cite{Qu} that the sequence $\{A(m,1)\}_m$ has infinitely many sign changes. However, we are interested in quantitative results. Towards this the most optimal conjecture is that the sequence has a positive proportion of sign changes, i.e. that the number of sign changes of the sequence of Hecke eigenvalues $\{A(m,1)\}_{m\leq X}$ is of the order of magnitude $\#\{m\leq X\,:\,A(m,1)\neq 0\},$ where $\#\mathcal X$ denotes the cardinality of a set $\mathcal X$. Our goal in this paper is to give evidence towards such result by proving the following. Let $L(s,\phi)$ be the Godement-Jacquet $L$-function attached to the $\mathrm{GL}_3$ form $\phi$. Let $\theta\geq 0$ be such that $L(1/2+it,\phi)\ll_\varepsilon (3+|t|)^{\theta+\varepsilon}$ for every $\varepsilon>0$. Unconditionally we know that $\theta\leq 3/5$ when $\phi$ is self-dual \cite{ref2}. 

\begin{theorem}\label{merkkivaihtelu}
Assume that the underlying Hecke-Maass cusp form is self-dual.

(i) Suppose that $\theta>1/2$. Then the number of sign changes of the sequence of Hecke eigenvalues $\{A(m,1)\}_{m\leq X}$ is $\gg_\varepsilon X^{1/2\theta-\varepsilon}$ for any $\varepsilon>0$. In particular, for $\theta=3/5$ the sequence has $\gg_\varepsilon X^{5/6-\varepsilon}$ sign changes.

(ii) Suppose that $0\leq\theta\leq 1/2$. Then the sequence of Hecke eigenvalues $\{A(m,1)\}_{m\leq X}$ has $\gg_\varepsilon X^{1-\varepsilon}$ sign changes for any $\varepsilon>0$. 
\end{theorem}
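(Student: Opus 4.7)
The plan combines three ingredients: an upper bound on partial sums of $A(m,1)$ coming from the subconvexity hypothesis, a matching lower bound on the $L^{1}$-sum of $|A(m,1)|$, and a partition of the summation range organised by sign.

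First I would establish
$$\sup_{Y \leq X} \Bigl|\sum_{m \leq Y} A(m,1)\Bigr| \ll_\varepsilon X^{1-1/(2\theta)+\varepsilon}$$
in case (i) (and the stronger bound $\ll_\varepsilon X^{\varepsilon}$ in case (ii)). The starting point is Perron's formula applied to $S(Y) := \sum_{m \leq Y} A(m,1)$; shifting the contour of integration onto the critical line $\operatorname{Re}(s) = 1/2$ brings the subconvexity bound $|L(1/2+it,\phi)| \ll_\varepsilon (1+|t|)^{\theta+\varepsilon}$ directly into play. Since a direct shift only produces the inferior exponent $(1+2\theta)/(2(1+\theta))$, the sharper exponent $1-1/(2\theta)$ calls for a more delicate treatment, for which I would apply the $\mathrm{GL}_3$ Voronoi summation formula to recast the partial sum as a dual exponential sum in the Hecke eigenvalues, and then estimate this exponential sum using the subconvexity bound applied to a suitable family of twisted $L$-functions.

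Next I would establish the lower bound $\sum_{m \leq X} |A(m,1)| \gg_\varepsilon X^{1-\varepsilon}$ via the standard H\"older inequality
$$\sum_{m \leq X} |A(m,1)|^2 \leq \Bigl(\sum_{m \leq X} |A(m,1)|\Bigr)^{2/3} \Bigl(\sum_{m \leq X} |A(m,1)|^4\Bigr)^{1/3},$$
combined with the Rankin-Selberg asymptotic $\sum_{m \leq X} |A(m,1)|^2 \sim c_\phi X$ and a fourth-moment estimate $\sum_{m \leq X} |A(m,1)|^4 \ll_\varepsilon X^{1+\varepsilon}$, which for self-dual $\phi$ is standard (deducible from analytic properties of the relevant Rankin-Selberg convolution $L$-functions).

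Finally, writing $N$ for the number of sign changes of $\{A(m,1)\}_{m \leq X}$, I would partition the integers in $[1,X]$ into $N+1$ maximal intervals $I_0,\ldots,I_N$ on which $A(m,1)$ preserves a constant non-zero sign (the zero values are ignored, per the paper's convention). Since $|\sum_{m \in I_j} A(m,1)| = \sum_{m \in I_j} |A(m,1)|$ on each such $I_j$, summing over $j$ gives
$$\sum_{m \leq X} |A(m,1)| = \sum_{j=0}^{N} \Bigl|\sum_{m \in I_j} A(m,1)\Bigr| \leq 2(N+1) \sup_{Y \leq X} |S(Y)|.$$
Combining this with the upper bound on $\sup_Y |S(Y)|$ and the lower bound on $\sum |A(m,1)|$ from the previous two steps yields $N \gg_\varepsilon X^{1/(2\theta)-\varepsilon}$ in case (i) and $N \gg_\varepsilon X^{1-\varepsilon}$ in case (ii). The main obstacle is the first step: without the Voronoi-plus-subconvexity treatment the crude Perron shift would only produce $N \gg X^{1/(2(1+\theta))-\varepsilon}$, i.e.\ $X^{5/16}$ rather than $X^{5/6}$ in the regime $\theta=3/5$, so the whole argument hinges on the sharper partial-sum bound.
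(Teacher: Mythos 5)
Your step 1, the claimed pointwise bound
$$\sup_{Y \leq X} \Bigl|\sum_{m \leq Y} A(m,1)\Bigr| \ll_\varepsilon X^{1-1/(2\theta)+\varepsilon},$$
is the linchpin of the whole argument, and it is almost certainly false rather than merely hard to prove. For $\theta = 3/5$ this asserts $\sum_{m\leq Y}A(m,1)\ll Y^{1/6+\varepsilon}$ uniformly in $Y\leq X$, and in case (ii) it would assert $\sum_{m\leq Y}A(m,1)\ll Y^{\varepsilon}$. These are far below what either the direct Perron shift (which, as you correctly compute, gives only $Y^{(1+2\theta)/(2(1+\theta))}$, i.e. $Y^{11/16}$ for $\theta=3/5$) or the $\mathrm{GL}_3$ Voronoi summation formula can deliver: the standard Voronoi-based estimate for a sharp-cutoff $\mathrm{GL}_3$ sum sits around $Y^{3/4+\varepsilon}$, and there is no known device (and no plausible one even under Lindel\"of/Ramanujan) that would drive the exponent down to $1/6$, let alone to $\varepsilon$. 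Heuristically the partial sum should have genuine fluctuations of size at least $Y^{1/3}$ or so, analogous to the $\Omega(X^{1/4})$ result for $\mathrm{GL}_2$, so a bound $\ll Y^{1/6+\varepsilon}$ cannot hold for every $Y$. Once the first step fails, the counting identity in your step 3, which bounds $\sum_{m\leq X}|A(m,1)|$ by $2(N+1)\sup_Y|S(Y)|$, gives nothing beyond $N\gg X^{5/16-\varepsilon}$, i.e. the exponent you yourself flagged as inferior.

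The paper's proof is designed precisely to sidestep this obstruction, in two ways that your write-up does not touch. First, it never asks for a pointwise bound on $S(Y)$: it bounds the second moment $\int_X^{2X}|\sum_{x\leq m\leq x+H}\cdots|^2\,\mathrm{d}x$ over short intervals $[x,x+H]$, which only needs the signed sum to be small for \emph{almost all} $x\sim X$ and is therefore compatible with the occasional large fluctuation of $S(Y)$. Second, and crucially for obtaining $H\approx X^{1-1/(2\theta)}$ rather than $H\approx X^{1/(1+\theta)}$, the paper introduces a bilinear decomposition $mk$ with $m\sim M=X^\delta$ and $(m,k)=1$, and runs a Hal\'asz-type dichotomy on the short Dirichlet polynomial $M(s)=\sum_{m\sim M}A(m,1)m^{-s}$ inside the mean-value estimate. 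The paper explicitly remarks that without this bilinear structure the same method would only yield $\gg X^{2/3-\varepsilon}$ sign changes. Both of these ideas — mean-square rather than pointwise, and the bilinear/Hal\'asz structure — are missing from your proposal, and without them the exponent $1/(2\theta)$ is out of reach. Your lower bound via H\"older plus the Rankin--Selberg second moment and Jiang--L\"u's fourth-moment estimate, and your sign-change partition into maximal constant-sign blocks, are both fine; it is only the upper-bound step that collapses.
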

\noindent The first part improves a result of Lau, Liu, and Wu \cite{Lau-Liu-Wu} who obtained $\gg X^{1/3}$ sign changes (which in turn improved an earlier result of Meher and Murty \cite{Meher-Murty}). We also note that more generally sign changes of the sequence $\{\Re A(m,1)\}_{m\leq X}$ have been considered by Hulse, Kuan, Lowry-Duda, and Walker \cite{Hulse-Kuan-Lowry-Duda-Walker} who obtained $\gg_\varepsilon X^{2/23-\varepsilon}$ sign changes in this case.

%The method of \cite{Lau-Liu-Wu} does not use any information about the Ramanujan-Petersson conjecture. This also improve the result of Meher and Murty \cite{Meher-Murty}, which was conditional under the Ramanujan-Petersson conjecture.  

%Methods used to prove the previous result can be used to derive the following result for the real part of $A(m,1)$.
%\begin{theorem}
%(i) The number of sign changes of the sequence $\{\Re A(m,1)\}_{m\leq x}$ is $\gg_\varepsilon x^{2/3-\vartheta-\varepsilon}$ for any $%\varepsilon>0$.

%(ii) Suppose that the generalised Lindel\"of hypothesis for $\mathrm{GL}_3$ $L$-functions holds in the $t$-aspect. Then the sequence $\{\Re A(m,1)\}_{m\leq x}$ has $\gg_\varepsilon x^{1-\vartheta-\varepsilon}$ sign changes for any $\varepsilon>0$. 
%\end{theorem}
%Notice that these bounds are trivial with the current best value of $\vartheta=5/14$, but are non-trivial when $\vartheta$ is small enough. In particular, under the Ramanujan-Petersson conjecture the bounds in Theorem 3. match those bounds of Theorem \ref{merkkivaihtelu}. 

The proof of Theorem \ref{merkkivaihtelu} follows a different path compared to the method used in \cite{Lau-Liu-Wu}. Our strategy is to show that short interval $[x,x+H]$ contains a sign change for many $x\sim X$ (here and subsequently $x\sim X$ means $X\leq x\leq 2X)$ with $H$ as small as possible whereas Lau, Liu, and Wu proceeded by using a Voronoi type series approximation for the weighted mean of the Fourier coefficients together with the method of Heath-Brown and Tsang \cite{Heath-Brown-Tsang}. For finding sign changes in short intervals, we follow an approach of Matom\"aki and Radziwi\l\l\,\cite{Matomaki-Radziwill1}. That is, we will compare the quantities 
\begin{align*}
S_1(x;H):=\left|\sum_{\substack{x\leq mk\leq x+H\\
m\sim M\\
(k,m)=1}}A(mk,1)\right|\quad\text{and}\quad S_2(x;H):=\sum_{\substack{x\leq mk\leq x+H\\
m\sim M\\
(k,m)=1}}|A(mk,1)|,
\end{align*}
where $M<H$ is a small power of $X$. Obviously the value of the former sum is at most the value of the latter sum and they are equal precisely when there is no sign change in the interval $[x,x+H]$. Our aim is now to show that for some small $H$ we have $S_1(x;H)<S_2(x;H)$ for $\gg_\varepsilon X^{1-\varepsilon}$ of $x\sim X$ and so there is a sign change in the interval $[x,x+H]$ for such $x\sim X$. This clearly implies that there are at least $\gg_\varepsilon X^{1-\varepsilon}H^{-1}$ sign changes for any $\varepsilon>0$. Unconditionally the smallest value of $H$ we can take turns out to be $\approx X^{1/6}$ (more generally, for $\theta>1/2$ the optimal choice is $H\approx X^{1-1/2\theta}$ and for $0\leq\theta\leq 1/2$ we can choose $H$ to be an arbitrarily small power of $X$), which gives the desired result. We emphasise that the bilinear structure in the sums $S_1(x;H)$ and $S_2(x;H)$ leads to an improved exponent. Indeed, unconditionally our argument would only yield $\gg_\varepsilon X^{2/3-\varepsilon}$ sign changes for the Hecke eigenvalues when comparing the quantities
\begin{align*}
\left|\sum_{x\leq m\leq x+H}A(m,1)\right|\qquad\text{and}\qquad \sum_{x\leq m\leq x+H}\left|A(m,1)\right|.
\end{align*}
This matches the amount of sign changes which follows from the convexity bound of $L(s,\phi)$. 

There are some differences compared to the treatment in \cite{Matomaki-Radziwill1} in which the authors reduce bounding an analogue of $S_1(x;H)$ for almost all $x\sim X$ to a shifted convolution problem involving the Fourier coefficients of the underlying form. This also allows them to work with very small $H$ and in addition to mollify, which saves some powers of logarithm, thus giving the optimal result. This strategy is problematic in the higher rank setting due to lack of progress towards the corresponding shifted convolution problem. Instead, we establish a non-trivial upper bound for $S_1(x;H)$ for almost all $x\sim X$ by using complex analytic techniques in the spirit of \cite{Matomaki-Radziwill2}. This is the main reason why we need to take $H$ to be fairly large.     

We will also show that one gets a positive proportion of sign changes for many $\mathrm{GL}_3$ forms. Let $\mathcal H_T$ denote the set of self-dual Hecke-Maass cusp forms $\phi$ for the group $\mathrm{SL}_3(\mathbb Z)$ with\footnote{Here $\nu_\phi=(\nu_1,\nu_2)\in\mathbb C^2$ is the spectral parameter of $\phi$ and $\|\nu_\phi\|:=\sqrt{|2\nu_1+\nu_2|^2+|\nu_2-\nu_1|^2+|\nu_1+2\nu_2|^2}$.} $\|\nu_\phi\|\leq T$. For self-dual forms we show the following result stating that the sequence $\{A_\phi(m,1)\}_m$ has a positive proportion of sign changes for at least $(1-\varepsilon)\#\mathcal H_T$ forms in $\mathcal H_T$ for any fixed $\varepsilon>0$ as $T\longrightarrow\infty$. This follows from combining a result of Matom\"aki and Radziwi{\l\l}\, \cite[Corollary 3]{Matomaki-Radziwill2} together with an effective Sato-Tate result of Lau and Wang \cite{Lau-Wang} (which is a real-analytic analogue for the result of Murty and Sinha \cite{Murty-Sinha}) after reducing to the $\mathrm{GL}_2$-case.

\begin{theorem}\label{Hecke-aa}
Let $\varepsilon>0$ be fixed. Then there exists a subset $\mathcal S_T\subset\mathcal H_T$ with at least $(1-\varepsilon)\#\mathcal H_T$ elements such that for any $\phi\in\mathcal S_T$ the sequence $\{A_\phi(m,1)\}_m$ has a positive proportion of sign changes as $T\longrightarrow\infty$. 
\end{theorem}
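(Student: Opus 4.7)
The plan is to follow the reduction indicated by the author. By Gelbart--Jacquet combined with Ramakrishnan's descent, every self-dual $\phi\in\mathcal H_T$ can be written as the symmetric square lift $\mathrm{Sym}^2 f$ of a Hecke-Maass cusp form $f$ for $\mathrm{SL}_2(\mathbb Z)$. A short Satake computation gives
\[
A_\phi(p,1) = \lambda_f(p)^2 - 1
\]
at every prime, extended multiplicatively to $A_\phi(m,1) = \lambda_{\mathrm{Sym}^2 f}(m)$ for all $m\geq 1$. The correspondence of spectral parameters identifies $\mathcal H_T$ with a family $\mathcal F_T$ of $\mathrm{SL}_2(\mathbb Z)$ Maass forms whose size matches $\#\mathcal H_T$ by the respective Weyl laws, so counting exceptional subsets can be done interchangeably in either family.

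Next, I would observe that $A_\phi(p,1)=\lambda_f(p)^2-1$ is positive precisely when $\lambda_f(p)\notin(-1,1)$ and negative precisely when $\lambda_f(p)\in(-1,1)$; both of the regions $(-1,1)$ and $[-2,-1]\cup[1,2]$ carry positive mass under the Sato--Tate measure on $[-2,2]$. The real-analytic effective Sato--Tate theorem of Lau and Wang \cite{Lau-Wang} then supplies the following: for all but at most $\varepsilon\cdot\#\mathcal F_T$ forms $f\in\mathcal F_T$, the primes $p$ with $\lambda_f(p)$ in each of the two regions form sets of positive lower natural density, quantified uniformly over the good $f$. Pulling this back through $\mathrm{Sym}^2$ produces a subset $\mathcal S_T\subset\mathcal H_T$ with $\#\mathcal S_T\geq(1-\varepsilon)\#\mathcal H_T$ such that for each $\phi\in\mathcal S_T$ the primes with $A_\phi(p,1)>0$ and those with $A_\phi(p,1)<0$ each form a set of positive density.

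Finally, I would apply Corollary 3 of \cite{Matomaki-Radziwill2}, whose content is that a real-valued multiplicative function $g$ satisfying mild size bounds ($|g(m)|\ll_\varepsilon m^\varepsilon$ and $\sum_{m\leq X}g(m)^2\gg X$, both of which hold for $A_\phi(m,1)$ via the Rankin--Selberg bound for $L(s,\phi\otimes\overline\phi)$) has a positive proportion of sign changes provided that both $\{p\,:\,g(p)>0\}$ and $\{p\,:\,g(p)<0\}$ have positive lower density. Applying this to every $\phi\in\mathcal S_T$ yields the theorem. The main obstacle is purely quantitative: one must check that Lau--Wang's effective equidistribution is strong enough to deliver positive-density sign partitions for all but an $\varepsilon$-proportion of $\mathcal H_T$, with $\varepsilon$ that can be driven to $0$ as $T\to\infty$, in a form compatible with the precise hypotheses of \cite{Matomaki-Radziwill2}. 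Matching these two effective statements is where the real work lies, while the overall structure is a clean reduction-plus-black-boxes argument.
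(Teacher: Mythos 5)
Your overall architecture matches the paper's: reduce to $\mathrm{GL}_2$ via Ramakrishnan's descent, invoke the identity $A_\phi(p,1)=\lambda_f(p)^2-1$, feed an effective Sato--Tate input (Lau--Wang) into a sign-change criterion of Matom\"aki--Radziwi\l\l. However, the specific criterion you invoke, and the Sato--Tate input you would need to feed into it, diverge from the paper's in a way that leaves a genuine gap.

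The paper applies Lemma \ref{positprop} (\cite[Corollary 6]{Matomaki-Radziwill2}), which is an if-and-only-if statement: a real multiplicative $f$ has a positive proportion of sign changes iff $f(n)<0$ for \emph{some} $n$ and $f(n)\neq 0$ for a positive proportion of $n$. The first condition is supplied by Theorem \ref{merkkivaihtelu}(i) -- a single negative value is enough -- while the second is reduced via Serre's criterion (\cite[Th\'eor\`eme 14]{Serre}) to showing $\sum_{p:\,A_\phi(p,1)=0}1/p<\infty$. Crucially, this last finiteness is verified for all but $\varepsilon\#\mathcal H_T$ forms by \emph{averaging the Lau--Wang bound over the family}: one bounds
\begin{align*}
\sum_{\phi\in\mathcal H_T}\sum_{\substack{p\leq X\\ A_\phi(p,1)=0}}\frac1p\ll\#\mathcal H_T
\end{align*}
using the estimate $\#\{g:\,\sqrt{1-p^{-\delta}}<|\lambda_g(p)|<\sqrt{1+p^{-\delta}}\}\ll\#X_T(p^{-\delta}+\log p/\log T)$, and then applies Markov's inequality. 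Your proposal instead asserts that Lau--Wang ``supplies'' positive lower density, \emph{for each of all but $\varepsilon\#\mathcal F_T$ forms}, of primes with $\lambda_f(p)$ in $(-1,1)$ and in its complement. But Lau--Wang is a \emph{vertical} equidistribution statement (fixed $p$, average over forms); turning it into a \emph{horizontal} statement (fixed form, density over $p$) for all but an $\varepsilon$-fraction of forms is exactly the averaging argument you flag as ``the main obstacle'' and do not carry out. Moreover, your route demands strictly more than the paper's: positive lower density of primes with $A_\phi(p,1)<0$ is a much stronger input than the existence of a single negative coefficient, and the paper only proves such a positive-density statement in Theorems \ref{non-vanishing-theorem}--\ref{positive-negative}, \emph{conditionally} on the Ramanujan--Petersson conjecture and via the large sieve (Lemma \ref{self-dual-large-sieve}), not in the proof of Theorem \ref{Hecke-aa}. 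Finally, positive density of $\{p:\,A_\phi(p,1)>0\}$ and $\{p:\,A_\phi(p,1)<0\}$ does not by itself imply $\sum_{p:\,A_\phi(p,1)=0}1/p<\infty$ (the zero set of primes could still have a divergent reciprocal sum), so if the Matom\"aki--Radziwi\l\l\ corollary you quote requires non-vanishing on a positive proportion of integers, you have not verified that hypothesis either. In short: same reduction and ingredients, but the criterion you would apply needs a stronger, unverified Sato--Tate input, and the decisive average-over-the-family step is acknowledged but omitted.
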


\noindent On the other hand, we will also consider sign changes among the Fourier coefficients of $\mathrm{GL}_3$ forms, which are not necessarily lifts of lower rank forms (like the self-dual forms are). In this respect a good analogue for the $\mathrm{GL}_2$ Fourier coefficients are the multiplicative coefficients $A(m,m)$, which are also known to be always real-valued. It follows from the Hecke relations that $A(p,p)=|A(p,1)|^2-1$ at primes $p$, so these coefficients are also closely related to the Hecke eigenvalues.

For these coefficients we have an analogous result to Theorem \ref{Hecke-aa}, but the proof requires more work as we cannot reduce to the lower rank case due to the fact that there is no relation between generic $\mathrm{GL}_3$ cusp forms and $\mathrm{GL}_2$ cusp forms. Instead, the proof is based on a weaker variant of the Lau-Wang result \cite{Lau-Wang} in the $\mathrm{GL}_3$-setting we establish, see Theorem \ref{Sato-Tate} below. To state the result concerning the positive proportion of sign changes we need some notation. We write $\mathfrak a_\mathbb C^*$ for the dual of the Lie algebra $\mathfrak a_\mathbb C$ of the subgroup of diagonal matrices inside $\mathrm{SL}_3(\mathbb C)$ and $W$ for the Weyl group of $\mathrm{SL}_3(\mathbb C)$ acting on $\mathbb C^3$ by permuting the elements. Let $\nu_0\in i\mathfrak a_\mathbb C^*$ be fixed, $\eta>0$ be very small but fixed, and set\footnote{Here we identify $\nu=(\nu_1,\nu_2)\in\mathbb C^2$ with $(2\nu_1+\nu_2,\nu_2-\nu_1,-\nu_1-2\nu_2)\in\mathbb C^3$.}
\begin{align*}
\widetilde{\mathcal H}_T:=\left\{\phi\,\,\text{Hecke-Maass cusp form for }\mathrm{SL}_3(\mathbb Z)\,\text{so that }\|w.\nu_\phi-T\nu_0\|\leq T^{1-\eta}\,\text{for some }w\in W\right\}.
\end{align*}
Then we have the following.
\begin{theorem}\label{aa}  
Let $\varepsilon>0$ be fixed. Then there exists a subset $\widetilde{\mathcal S}_T\subset\widetilde{\mathcal H}_T$ with at least $(1-\varepsilon)\#\widetilde{\mathcal H}_T$ elements such that for any $\phi\in\widetilde{\mathcal S}_T$ the sequence $\{A_\phi(m,m)\}_m$ has a positive proportion of sign changes as $T\longrightarrow\infty$. 
\end{theorem}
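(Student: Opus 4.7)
The plan is to reduce Theorem \ref{aa} to an application of the multiplicative sign-change criterion of Matomaki and Radziwill \cite[Corollary 3]{Matomaki-Radziwill2}, in the same spirit as the proof of Theorem \ref{Hecke-aa}, but using the effective Sato-Tate statement for the $\mathrm{GL}_3$-family $\widetilde{\mathcal H}_T$ (Theorem \ref{Sato-Tate}) in place of the Lau-Wang result, since here there is no $\mathrm{GL}_2$-reduction available. The Matomaki-Radziwill criterion asserts that a real-valued multiplicative function $f$ of polynomial growth has a positive proportion of sign changes provided a positive proportion of primes $p$ satisfy $f(p)>0$ and a positive proportion satisfy $f(p)<0$ (together with a mild non-degeneracy condition). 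I would apply this to $f(n):=A_\phi(n,n)$: multiplicativity follows from the Hecke relation $A_\phi(m_1m_2,n_1n_2)=A_\phi(m_1,n_1)A_\phi(m_2,n_2)$ for coprime $m_1n_1$ and $m_2n_2$, reality is already noted, and polynomial growth is standard. It thus suffices to establish the sign densities at primes for all but at most $\varepsilon\#\widetilde{\mathcal H}_T$ forms.

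As a preliminary, I would compute the corresponding densities under the generalised Sato-Tate measure. Using the Hecke identity $A_\phi(p,p)=|A_\phi(p,1)|^2-1$, the events $\{A_\phi(p,p)>0\}$ and $\{A_\phi(p,p)<0\}$ correspond to the regions in the Sato-Tate space of $\mathrm{SU}(3)$-conjugacy classes where $|\mathrm{tr}(g)|^2>1$ and $|\mathrm{tr}(g)|^2<1$ respectively. A direct integration against the Plancherel-Sato-Tate measure shows that both regions have strictly positive measure, so the Sato-Tate densities of $\{A_\phi(p,p)>0\}$ and $\{A_\phi(p,p)<0\}$ are positive absolute constants $\delta_+$ and $\delta_-$.

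The next step is to transfer these Sato-Tate probabilities to actual prime counts for individual forms via Theorem \ref{Sato-Tate}. That theorem says that, in a suitable range of primes and for at least $(1-\varepsilon)\#\widetilde{\mathcal H}_T$ forms $\phi$, the empirical distribution of $\{A_\phi(p,p)\}_{p\le x}$ approximates the Sato-Tate measure quantitatively. Evaluating this against smooth approximants to the indicators $\mathbf 1_{(0,\infty)}$ and $\mathbf 1_{(-\infty,0)}$ yields, for each such good $\phi$, bounds $\#\{p\le x:\pm A_\phi(p,p)>0\}\gg_{\delta_\pm}\pi(x)$. These are exactly the hypotheses required to feed into \cite[Corollary 3]{Matomaki-Radziwill2}, and the set of good forms serves as $\widetilde{\mathcal S}_T$.

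The main obstacle is to match the quantitative strength of Theorem \ref{Sato-Tate} to the smoothness required when approximating the half-line indicators: one must choose a smoothing scale $\eta>0$ small enough that the boundary zone $\{||A_\phi(p,1)|^2-1|<\eta\}$ captures only a small fraction of Sato-Tate mass (this is where the zero set $\{A_\phi(p,p)=0\}$ being measure-zero is used), while simultaneously keeping the relevant Sobolev norms of the smoothed test functions moderate enough that the error term delivered by Theorem \ref{Sato-Tate} still decays as $T\longrightarrow\infty$. A secondary but routine point is to verify the mild non-vanishing hypothesis at small primes in \cite[Corollary 3]{Matomaki-Radziwill2}; any form violating it can be absorbed into the negligible bad set of size $\varepsilon\#\widetilde{\mathcal H}_T$.
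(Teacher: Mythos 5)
Your overall strategy — feed a Sato-Tate input for the family $\widetilde{\mathcal H}_T$ into a Matom\"aki--Radziwi\l\l{} sign-change criterion for multiplicative functions — is the same as the paper's, but the details diverge in a way that leaves a real gap.

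First, you misstate the criterion. The result used (Lemma \ref{positprop}, i.e.\ Corollary 6 of Matom\"aki--Radziwi\l\l{}) requires only that the real multiplicative function $f$ take a negative value at \emph{one} integer and that $f(n)\neq 0$ for a \emph{positive proportion} of integers $n$; it does not ask for positive density of primes with $f(p)>0$ and with $f(p)<0$. The paper gets the negativity cheaply from Xiao--Xu \cite[Corollary 2]{Xiao-Xu} for almost all $\phi\in\widetilde{\mathcal H}_T$, and then the whole weight of the argument falls on the nonvanishing condition. By Serre's Th\'eor\`eme 14, positive density of nonvanishing reduces to $\sum_{p:\,A_\phi(p,p)=0}1/p<\infty$. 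That is precisely the point you dismiss as ``secondary but routine'': it is actually the crux.

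Second, your observation that the zero set $\{A_\phi(p,p)=0\}$ has Sato-Tate measure zero is not strong enough to establish $\sum_{p:\,A_\phi(p,p)=0}1/p<\infty$. Zero density of bad primes is compatible with a divergent $\sum 1/p$. What one needs, and what the paper uses, is a quantitative power saving: the $p$-adic Plancherel measure of $\{|S_{1,1}(\sigma)|\le p^{-\delta}\}$ is $\ll p^{-\delta}$. Combined with the $O((\log p/\log T)^{1/5})$ error term from Theorem \ref{Sato-Tate}, this gives
$\sum_{\phi\in\widetilde{\mathcal H}_T}\sum_{p\le X,\,A_\phi(p,p)=0}1/p \ll \#\widetilde{\mathcal H}_T$
uniformly for $\log X\ll(\log T)^{1/5}$, and then Markov's inequality produces a set of forms with $\sum 1/p$ bounded.

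Third, Theorem \ref{Sato-Tate} is a vertical (family-averaged) statement at a single prime $p$; it does not by itself say anything about the empirical distribution of $\{A_\phi(p,p)\}_{p\le x}$ for an individual form $\phi$. You assert that it gives such a horizontal approximation ``for at least $(1-\varepsilon)\#\widetilde{\mathcal H}_T$ forms''; that statement would require the additional averaging-over-primes plus Markov step, and is not a direct restatement of the theorem. As written, the ``main obstacle'' you identify (choosing a smoothing scale for half-line indicators) is not the real obstacle, and the step you wave away is where the substance lies. To repair the argument you should target the Serre condition $\sum_{p:\,A_\phi(p,p)=0}1/p<\infty$ directly via Theorem \ref{Sato-Tate} and a double-averaging/Markov argument, rather than trying to establish both sign densities at primes.
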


\noindent One can also pose a question about the number of non-zero Fourier coefficients, that is to ask how large the set $\{m\leq X:\, A(m,1)\neq 0\}$ is? Unconditionally we know that this quantity is $\gg_\varepsilon X^{1-\varepsilon}$ due to a result of Jiang and L\"u \cite{JiangLu}. Slightly better lower bounds are known for $\mathrm{GL}_2$ Hecke-Maass cusp forms by comparing the second and fourth moments of the Fourier coefficients (and for holomorphic cusp forms the optimal result is known, see \cite{Serre}). One can show that the following strengthening holds for almost all forms under the Ramanujan-Petersson conjecture. By this we mean that the set of exceptional forms has a zero density. 

\begin{theorem}\label{non-vanishing-theorem}
(i) Assume the Ramanujan-Petersson conjecture for classical Hecke-Maass cusp forms. Then 
\begin{align*}
\#\{m\leq X\,:\,A_\phi(m,1)\neq 0\}\asymp X\prod_{\substack{p\leq X\\
A_\phi(p,1)=0}}\left(1-\frac 1p\right)
\end{align*}
for almost all self-dual Hecke-Maass cusp forms $\phi\in\mathcal H_T$ as $T\longrightarrow\infty$.

(ii) Assume the Ramanujan-Petersson conjecture for $\mathrm{GL}_3$ Hecke-Maass cusp forms. Then 
\begin{align*}
\#\{m\leq X\,:\,A_\phi(m,m)\neq 0\}\asymp X\prod_{\substack{p\leq X\\
A_\phi(p,p)=0}}\left(1-\frac 1p\right)
\end{align*}
for almost all Hecke-Maass cusp forms $\phi\in\widetilde{\mathcal H}_T$ as $T\longrightarrow\infty$.
\end{theorem}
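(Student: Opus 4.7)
The plan is to combine multiplicativity of the Fourier coefficients with a standard sieve argument, using the effective vertical Sato--Tate theorems as the only ``almost all'' input.

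For part (i), self-duality of $\phi\in\mathcal H_T$ forces $\phi=\mathrm{sym}^2 f$ for a Hecke-Maass cusp form $f$ on $\mathrm{GL}_2(\mathbb Z)$; under Ramanujan-Petersson for $f$ the Satake parameters are $e^{\pm i\theta_{f,p}}$, and $A_\phi(p,1)=\lambda_f(p)^2-1=4\cos^2\theta_{f,p}-1$ vanishes precisely for $\theta_{f,p}\in\{\pi/3,2\pi/3\}$. Calling these primes bad and denoting the set by $\mathcal B_\phi$, a short computation with Schur polynomials at the cube roots of unity yields $\sum_{k\ge 0}A_\phi(p^k,1)t^k=(1-t^3)^{-1}$ whenever $p\in\mathcal B_\phi$, so $A_\phi(p^k,1)\ne 0$ if and only if $3\mid k$. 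By multiplicativity of $m\mapsto A_\phi(m,1)$, the set to be counted is then exactly $\{m\le X:\,v_p(m)\in 3\mathbb Z\ \forall p\in\mathcal B_\phi\}$.

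The upper bound $\ll X\prod_{p\in\mathcal B_\phi,\,p\le X}(1-1/p)$ follows by peeling off the $\ll X^{1/3+\varepsilon}$ integers with a large cubefull factor and applying the fundamental lemma of the sieve to count $\{m\le X:(m,P_\phi)=1\}$ with $P_\phi:=\prod_{p\in\mathcal B_\phi,\,p\le X}p$; the higher-cube correction factor is a convergent Euler product. The matching lower bound comes from restricting to squarefree $m$ coprime to $P_\phi$: there $A_\phi(m,1)=\prod_{p\mid m}A_\phi(p,1)\ne 0$, and a standard sieve again gives the density $\gg X\prod_{p\mid P_\phi}(1-1/p)$. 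The ``almost all'' qualifier is invoked only to guarantee that these Mertens-type products are not anomalously small: by the effective vertical Sato-Tate theorem of Lau-Wang for $\mathrm{GL}_2$, outside at most $\varepsilon\#\mathcal H_T$ exceptional forms, the count $\#\{p\le X:\theta_{f,p}\in(\pi/3-\delta,\pi/3+\delta)\cup(2\pi/3-\delta,2\pi/3+\delta)\}$ matches its Sato-Tate prediction for every fixed $\delta>0$, which after a dyadic decomposition forces $\sum_{p\in\mathcal B_\phi}1/p$ to grow slowly enough for the sieve main term to dominate.

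Part (ii) runs in exact parallel: multiplicativity of $m\mapsto A_\phi(m,m)$ reduces matters to primes, and the Hecke identity $A_\phi(p,p)=|A_\phi(p,1)|^2-1$ recalled in the introduction shows that $A_\phi(p,p)=0$ is a codimension-one condition on the $\mathrm{GL}_3$ Satake parameters at $p$. Since $\widetilde{\mathcal H}_T$ no longer admits reduction to $\mathrm{GL}_2$, the role of Lau-Wang is played by the new effective Sato-Tate Theorem \ref{Sato-Tate}, used to exclude the exceptional $\varepsilon$-proportion of forms with anomalously dense bad primes. The main obstacle I anticipate is uniformity: the sieve error, essentially $2^{\omega(P_\phi)}$, must not overwhelm the main term $X\prod_{p\in\mathcal B_\phi}(1-1/p)$, and the cubefull (resp.\ squarefull) correction must be absorbed. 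Both reduce to quantitative control of $\#(\mathcal B_\phi\cap[1,X])$, and that is precisely what the effective Sato-Tate inputs provide, but only generically -- hence the ``almost all'' hypothesis rather than a bound uniform in $\phi$. A secondary subtlety in (ii) is that the vanishing locus $\{|A_\phi(p,1)|=1\}$ is more complicated than the two-point set $\{\pi/3,2\pi/3\}$ in (i), so one must apply the quantitative equidistribution of Theorem \ref{Sato-Tate} in thin neighbourhoods of an entire hypersurface rather than at isolated points.
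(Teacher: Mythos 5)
Your route is genuinely different from the paper's. The paper does \emph{not} run the sieve directly; instead it reduces immediately to Lemma 2.4 of Matom\"aki--Radziwi\l\l\ (cited as \cite[Lemma 2.4.]{Matomaki-Radziwill1}), which yields the asymptotic once one knows $A_\phi(p,1)<0$ (resp.\ $A_\phi(p,p)<0$) for a positive proportion of primes. The bulk of the paper's proof then establishes that positive proportion for almost all forms by purely Hecke-theoretic means: under Ramanujan--Petersson $A(p,1)\in[-1,3]$, so $\mathbf 1[A(p,1)<0]\geq \tfrac14\bigl(A(p,1)^2-3A(p,1)\bigr)$; the Hecke relation $A(p,1)^2=1+A(p,p)$ (and, in part (ii), the longer identity expanding $A(p,p)^2-8A(p,p)$) turns the error into a linear combination of Fourier coefficients; and the Elliott--Montgomery--Vaughan large sieve (Lemmas \ref{large-sieve} and \ref{self-dual-large-sieve}) then shows the error is $o(\log\log X)$ for almost every form. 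You bypass Matom\"aki--Radziwi\l\l\ entirely and attempt to re-derive the sieve step from scratch, and you replace the large sieve with the effective Sato--Tate theorems (Lau--Wang and the paper's Theorem \ref{Sato-Tate}), which the paper reserves for Theorems \ref{Hecke-aa} and \ref{aa}. Both inputs are consistent with the paper's toolbox, but your ``almost all'' input is noticeably heavier: you are invoking a refined equidistribution theorem for thin windows around a codimension-one locus, whereas the paper only needs second-moment information furnished by the large sieve.

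A concrete error in your write-up: the set $\{m\le X:A_\phi(m,1)\ne 0\}$ is \emph{not} ``exactly'' $\{m\le X:\,v_p(m)\in 3\mathbb Z\ \forall p\in\mathcal B_\phi\}$. Writing the Satake parameters of $\phi=\mathrm{sym}^2 f$ at $p$ as $\{e^{2i\theta},1,e^{-2i\theta}\}$, one computes $A_\phi(p^2,1)=2\cos(2\theta)\bigl(1+2\cos(2\theta)\bigr)$, which vanishes also at $\theta\in\{\pi/4,3\pi/4\}$, i.e.\ at certain \emph{good} primes with $A_\phi(p,1)\ne 0$. Only the inclusion $\{m:A_\phi(m,1)\ne 0\}\subseteq\{m:v_p(m)\in 3\mathbb Z\ \forall p\in\mathcal B_\phi\}$ holds. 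Fortunately your upper bound only uses this inclusion and your lower bound restricts to squarefree $m$ prime to $P_\phi$, so this particular misstatement does not propagate, but it should be corrected. More seriously, in part (ii) you have no analogue of the clean ``$3\mid k$'' criterion: you would need to know exactly for which $k$ the coefficient $A_\phi(p^k,p^k)$ vanishes when $A_\phi(p,p)=0$, and the vanishing locus of $S_{1,1}$ in $T_0$ is a genuine curve, not a finite set; this is left unaddressed. Finally, the sieve itself needs a quantitative execution (fundamental lemma with explicit level, cubefull/squarefull corrections, and uniformity of the implied constants over the good proportion of forms) which you flag as ``the main obstacle'' but do not carry out; the paper avoids this entirely by quoting Matom\"aki--Radziwi\l\l. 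So: a plausible alternative strategy, but with the key steps that the paper simply cites instead reopened and left incomplete, especially for part (ii).
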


\begin{remark}
Note that the first part is conditional on the Ramanujan-Petersson conjecture for classical Hecke-Maass cusp forms whereas the latter part is conditional on the higher rank conjecture. The reason for this is the relation connecting $A(p,1)$ to the Fourier coefficient of a classical Hecke-Maass cusp form at prime $p$ in the self-dual case and the relation connecting $A(p,p)$ to $A(p,1)$ in the general case.
\end{remark}

\noindent It follows from sieve theoretic arguments (see \cite[Lemma 3.1.]{Matomaki-Radziwill1}) that the statement of the above theorem holds for the form $\phi$ if $A_\phi(p,1)<0$ for a positive proportion of primes $p$. This is what we shall show to hold for almost all forms. Such result follow from rather simple applications of Elliott-Montgomery-Vaughan type large sieve inequalities described below together with the Hecke relations. Considerations needed for this together with the work of Matom\"aki and Radziwi{\l\l} \cite[Corollary 3]{Matomaki-Radziwill2} lead to the following result as a by-product.

\begin{theorem}\label{positive-negative}
Let $\varepsilon>0$ be fixed. Assume the Ramanujan-Petersson conjecture for classical Hecke-Maass cusp forms. Then there exists a subset $\mathcal S'_T\subset\mathcal H_T$ with at least $(1-\varepsilon)\#\mathcal H_T$ elements such that for any $\phi\in\mathcal S'_T$ asymptotically half of the non-zero coefficients $A_\phi(m,1)$ are positive and half of them are negative as $T\longrightarrow\infty$. 

Similarly, there exists a subset $\mathcal{S}''_T\subset\widetilde{\mathcal H}_T$ with at least $(1-\varepsilon)\#\widetilde{\mathcal H}_T$ elements such that for any $\phi\in\mathcal{S}''_T$ asymptotically half of the non-zero coefficients $A_\phi(m,m)$ are positive and half of them are negative as $T\longrightarrow\infty$ assuming the Ramanujan-Petersson conjecture for $\mathrm{GL}_3$ Hecke-Maass cusp forms. 
\end{theorem}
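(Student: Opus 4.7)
The proof would follow as a by-product of the machinery already built up for Theorem \ref{non-vanishing-theorem}, combined with [Matomaki-Radziwill, Corollary 3]. That corollary, in essence, asserts that a bounded real-valued multiplicative function $f$ which takes negative values at ``enough'' primes (in the weighted sense $\sum_{p\,:\,f(p)<0}1/p=\infty$) satisfies the equidistribution
\begin{align*}
\#\{n\leq X\,:\,f(n)>0\}\sim\tfrac12\#\{n\leq X\,:\,f(n)\neq 0\}\qquad(X\to\infty).
\end{align*}
Under the Ramanujan-Petersson conjecture for $\mathrm{GL}_2$ (resp. $\mathrm{GL}_3$), the sequence $n\mapsto A_\phi(n,1)$ (resp. $n\mapsto A_\phi(n,n)$) is a bounded real-valued multiplicative function, via the Hecke relations on $\mathrm{SL}_3(\mathbb Z)$ together with the assumption of self-duality in the first case. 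Hence the corollary is applicable to each $\phi$ once its prime-condition has been verified.

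The verification of the prime-condition is exactly the step needed for Theorem \ref{non-vanishing-theorem} through [Matomaki-Radziwill, Lemma 3.1]. The plan is to recall that, in proving Theorem \ref{non-vanishing-theorem}, an Elliott-Montgomery-Vaughan type large sieve in the spectral aspect was combined with Lau-Wang's effective Sato-Tate theorem \cite{Lau-Wang} for part (i) and with the new effective Sato-Tate result Theorem \ref{Sato-Tate} for part (ii) to conclude that for all but $\varepsilon\#\mathcal H_T$ (resp. $\varepsilon\#\widetilde{\mathcal H}_T$) forms $\phi$ in the family, the set
\begin{align*}
\{p\,:\,A_\phi(p,1)<0\}\qquad\text{(resp. }\{p\,:\,A_\phi(p,p)<0\}\text{)}
\end{align*}
has positive lower density among primes. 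This density equals, by Sato-Tate, the mass that the relevant Sato-Tate measure (the symmetric-square Sato-Tate measure for part (i), the $\mathrm{SU}(3)/W$ push-forward measure for part (ii)) assigns to the negative half-line under $A(p,1)$ (resp. $A(p,p)$); this mass is strictly positive and strictly less than $1$. In particular both $\sum_{p\,:\,A_\phi(p,\cdot)<0}1/p$ and $\sum_{p\,:\,A_\phi(p,\cdot)>0}1/p$ diverge, so the hypothesis of [Matomaki-Radziwill, Corollary 3] is met.

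Taking $\mathcal S'_T$ (resp. $\mathcal S''_T$) to be the almost-full subfamily produced above and invoking [Matomaki-Radziwill, Corollary 3] for each $\phi$ in it then gives the desired balance between positive and negative non-zero Fourier coefficients, proving the theorem.

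The only genuinely delicate step — establishing that the prime-condition holds for almost all $\phi$ in the spectral family — has been done already for Theorem \ref{non-vanishing-theorem}; this is where the effective Sato-Tate input is essential (Lau-Wang for the reduction to $\mathrm{GL}_2$ in part (i), and the new Theorem \ref{Sato-Tate} for the thin $\mathrm{GL}_3$ window $\widetilde{\mathcal H}_T$ of radius $T^{1-\eta}$ in part (ii), the latter being the technically harder of the two). For the present theorem, no further arithmetic input is needed beyond packaging this conclusion and citing the Matomaki-Radziwill corollary.
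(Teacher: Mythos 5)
Your high-level architecture is right: the theorem is indeed a by-product of the Matom\"aki--Radziwi\l\l\ criterion (the paper cites \cite[Lemma~2.4]{Matomaki-Radziwill1}, not Corollary~3 of \cite{Matomaki-Radziwill2}, in the actual proof) together with the prime-sum estimates established in proving Theorems \ref{Hecke-aa}, \ref{aa}, and \ref{non-vanishing-theorem}, and no new arithmetic is needed. However, there are two concrete inaccuracies that matter.

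First, the hypothesis of the criterion you are using is not only that $\sum_{p:\,f(p)<0}1/p$ diverges. One also needs control of the primes at which $f$ \emph{vanishes}: there must be a function $K(X)\to 0$ with $\sum_{p\geq X,\,f(p)=0}1/p\leq K(X)$, i.e.\ $\sum_{p:\,f(p)=0}1/p<\infty$. Your appeal to "both $\sum_{p:\,f(p)<0}1/p$ and $\sum_{p:\,f(p)>0}1/p$ diverge" does not supply this, since both sums could diverge while the vanishing set also has $\sum 1/p=\infty$. The required convergence of $\sum_{p:\,A_\phi(p,\cdot)=0}1/p$ for almost all $\phi$ is exactly what the effective Sato--Tate inputs (Lemma \ref{Lau-Wang} reduced via Ramakrishnan's bijection for part (i), Theorem \ref{Sato-Tate} for part (ii)) deliver in the proofs of Theorems \ref{Hecke-aa} and \ref{aa} -- this is where Sato--Tate actually enters, not in the negativity condition.

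Second, your assertion that the set $\{p:\,A_\phi(p,\cdot)<0\}$ has positive lower density for an individual good $\phi$, "by Sato--Tate" with the density equal to the mass of the Sato--Tate measure on the negative half-line, would require a \emph{horizontal} Sato--Tate theorem (equidistribution over primes, for a fixed form), which is not available for Maass forms and is not used in this paper. The Sato--Tate results here are all \emph{vertical} (equidistribution over the family at a fixed prime). What the proof of Theorem \ref{non-vanishing-theorem} actually establishes, using only the Hecke relations, the Ramanujan--Petersson assumption, and the Elliott--Montgomery--Vaughan-type large sieve (Lemmas \ref{large-sieve} and \ref{self-dual-large-sieve}), is the averaged lower bound
\begin{align*}
\sum_{\substack{p\leq X\\ A_\phi(p,\cdot)<0}}\frac1p\geq\Bigl(\tfrac14+o(1)\Bigr)\log\log X \quad\text{(resp.\ }\tfrac19+o(1)\text{)}
\end{align*}
for almost all forms, which yields the required $L(X)\to\infty$ without any claim of pointwise prime density. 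Once you replace the erroneous horizontal Sato--Tate appeal with this averaged estimate, and supply the zero-set condition from Theorems \ref{Hecke-aa}/\ref{aa}, the argument goes through as intended.
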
  

\noindent It is natural to wonder whether the results of the present article generalise to the higher rank setting, namely for the group $\mathrm{SL}_n(\mathbb Z)$ with $n\geq 4$. Unconditionally the method used to obtain Theorem \ref{merkkivaihtelu} does not yield any non-trivial lower bound for the number of sign changes for higher rank groups. There are two reasons for this. Firstly, the mean-value theorem for Dirichlet polynomials (Lemma \ref{MVT}) is too weak in this situation and the other reason is that in general we do not know as good lower bound for the sum of absolute values of Hecke eigenvalues as given in Lemma \ref{Jiang-Lu} in the $\mathrm{GL}_3$-setting (see Remark \ref{lower-bound}). However, under the generalised Lindel\"of hypothesis and the Ramanujan-Petersson conjecture one obtains the lower bound $\gg x/(\log x)^{1-1/n}$ for such sum when $n\geq 4$. Actually, more precise estimate follows assuming the generalised Sato-Tate conjecture;
\begin{align*}
\sum_{m\leq x}|A(m,1,...,1)|\gg\frac{x}{(\log x)^{k_n}}
\end{align*}
for some $0<k_n<1-1/n$ depending on $n$, see the introduction in \cite{Jiang-Lu-Wang}. Such results seem currently out of reach unconditionally even assuming that the underlying form is self-dual. 

Theorem \ref{Hecke-aa} in turn relies crucially on a bijection between self-dual Maass cusp forms on $\mathrm{GL}_3$ and non-dihedral Maass cusp forms on $\mathrm{GL}_2$. Similar relationship does not exist in the higher rank setting and thus an alternative method is needed in order to establish a possible generalisation.

Concerning the coefficients $A(m,m)$ one would need an effective Sato-Tate theorem generalising Theorem \ref{Sato-Tate}, which in turn is based on Lemma \ref{asymptotic} below proved by Buttcane and Zhou \cite{Buttcane-Zhou}. This lemma is thought to generalise, but it seems currently out of reach (however, for a closely related result, see \cite[Theorem 1.5.]{Matz-Templier}). The underlying feature in this result is the orthogonality of Fourier coefficients discussed for instance in \cite{Zhou}. The orthogonality relation in question is actually known for the group $\mathrm{SL}_4(\mathbb Z)$ \cite{Goldfeld-Stade-Woodbury} and therefore there might be some hope to generalise the result of Buttcane and Zhou to this setting, which in turn would imply a generalisation of Theorem \ref{aa} for the group $\mathrm{SL}_4(\mathbb Z)$ (possibly under the Ramanujan-Petersson conjecture). 

The last two theorems on non-vanishing are based on the Hecke relations and large sieve inequalities of Elliott-Montgomery-Vaughan type. As the Hecke relations are available in more general settings as is the large sieve \cite{Lau-Ng-Royer-Wang} (a variant with restriction to self-dual forms as in Lemma \ref{self-dual-large-sieve} should be possible to establish too), these results should generalise in a straightforward manner, except combinatorics of the Hecke algebra becomes more complicated. However, we do not pursue this extension here. 

\section{Notation}

The symbols $\ll$, $\gg$, $\asymp$, $O$, and $o$ are used for the usual asymptotic notation: for
complex valued functions $f$ and $g$ in some set $X$, the notation $f \ll g$ means that
$|f(x)|\leq C|g(x)|$ for all $x\in X$ for some implicit constant $C\in\mathbb R_+$. When the
implied constant depends on some parameters $\alpha,\beta,...$ we use $\ll_{\alpha,\beta},...$ instead
of mere $\ll$. The notation $g\gg f$ means $f\ll g$, and $f\asymp g$ means $f\ll g\ll f$. Moreover, $f=O(g)$ means the same as $f\ll g$. When $f$ and $g$ depend on $M$, we say that $f(M)=o(g(M))$ if $g(M)$ never vanishes and $f(M)/g(M)\longrightarrow 0$ as $M\longrightarrow\infty$.

We write $\varepsilon$ for an arbitrary small positive constant, which may differ from line to line unless specified otherwise. As usual, complex variables are written in the form $s=\sigma+it$ with $\sigma,t\in\mathbb R$. The subscript in the integral $\int_{(\sigma)}$ means that we integrate over the vertical line $\Re s=\sigma$. %We define the Mellin transform of any sufficiently well-behaving function $f:\mathbb R\longrightarrow\mathbb C$ to be
%\begin{align*}
%\widehat f(s):=\int\limits_{0}^\infty f(x)x^{s-1}\,\mathrm d x. 
%\end{align*}
We write $x\sim X$ for $x\in[X,2X]$. The characteristic function of a set $\mathcal X$ is denoted by $1_{\mathcal X}$, the cardinality is $\#\mathcal X$, and the complement is denoted by $\mathcal X^c$. Finally, let $\mu$ be the M\"obius function and $d_r$ be the $r$-fold divisor function. 
 
\section{Useful results}
We start by recalling a few facts about higher rank automorphic forms, Hecke operators, and automorphic $L$-functions following \cite{Blomer,Goldfeld}. Maass cusp forms are eigenfunctions of the commutative algebra $\mathcal D$ of invariant differential operators of $\mathrm{SL}_3(\mathbb R)$ acting on the space $L^2(\mathrm{SL}_3(\mathbb R)/\mathrm{SO}_3)$, which is generated by two elements (see \cite[Chapters 2.3 and 2.4]{Goldfeld}), the Laplacian and another operator of degree $3.$ One class of eigenfunctions of $\mathcal D$ is given by the power functions $I_{\nu_1,\nu_2}$, which is parametrised by two complex numbers $\nu_1$ and $\nu_2$ (see \cite[Chapter 5]{Goldfeld}). A Maass cusp form $\phi$ for the group $\mathrm{SL}_3(\mathbb Z)$ with spectral parameters $\nu_1,\nu_2$ is an element in $L^2(\mathrm{SL}_3(\mathbb Z)\backslash\mathrm{SL}_3(\mathbb R)/\mathrm{SO}_3)$ that is an
eigenfunction of $\mathcal D$ with the same eigenvalues as $I_{\nu_1,\nu_2}$, and vanishes along all parabolics, that is,
\begin{align*}
\int_{(\mathrm{SL}_3(\mathbb Z)\cap U)\backslash U}\phi(uz)\mathrm d u = 0
\end{align*}
for $U=\left\{\begin{pmatrix}
1 & & *\\
 & 1 & *\\
 & & 1
 \end{pmatrix}\right\}, \left\{\begin{pmatrix}
1 & * & *\\
 & 1 & \\
 & & 1
 \end{pmatrix}\right\} $,
and 
$\left\{\begin{pmatrix}
1 & * & *\\
 & 1 & *\\
 & & 1
 \end{pmatrix}\right\} $. 
%We choose an orthonormal basis $\{\phi\}= \mathcal B\subset L^2(\mathrm{SL}_3(\mathbb Z)\backslash \mathrm{SL}_3(\mathbb R)/\mathrm{SO}_3)$ of Hecke-Maass cusp forms (i.e. Maass cusp forms that are eigenfunctions of the Hecke algebra as described below).

This can be re-phrased in more representation theoretic terms. Let $\mathrm{SL}_3(\mathbb R)=NAK$ be the Iwasawa
decomposition, where $K:=\mathrm{SO}_3$, $N$ is the standard unipotent subgroup, and $A$ is the group of diagonal
matrices with determinant one and positive entries. An infinite-dimensional, irreducible, everywhere unramified cuspidal automorphic representation $\pi$ of $\mathrm{GL}_3(\mathbb A_{\mathbb Q})$ with a trivial central character is generated by a Hecke-Maass cusp form $\phi$ for the group $\mathrm{SL}_3(\mathbb Z)$ as above. This representation factorises as an infinite restricted tensor product of local representations $\pi=\otimes_{v\leq\infty}'\pi_v$. The representation $\pi_\infty$ at the archimedean place is induced from the parabolic subgroup $NA$ by the extension of a character $\chi:\, A \longrightarrow\mathbb C^\times$, $\text{diag}(x_1,x_2,x_3)\mapsto x_1^{\alpha_1}x_2^{\alpha_2}x_3^{\alpha_3}$ with $\alpha_1+\alpha_2+\alpha_3=0$. In this way we can identify the spherical cuspidal automorphic spectrum with a discrete subset of the Lie algebra $\mathfrak a_{\mathbb C}^*/ W$, where we associate to each Maass cusp form $\phi$ the linear form $\ell= (\alpha_1,\alpha_2,\alpha_3)\in\mathfrak a_{\mathbb C}^*/ W$ called the Langlands parameter.  A convenient basis for $\mathfrak a^*$ is given by the fundamental weights $\text{diag}(2/3,-1/3,-1/3)$, $\text{diag}(1/3,1/3,-2/3)$ of $\mathrm{SL}_3$. The coefficients of $\ell=(\alpha_1,\alpha_2,\alpha_3)$ with respect to this basis can be obtained by evaluating $\ell$ at the two co-roots $\text{diag}(1,-1,0)$, $\text{diag}(0,1,-1)\in\mathfrak a$ and they are given by $3\nu_1,3\nu_2$. In this case we say that $(\nu_1,\nu_2)\in\mathbb C^2$ is the type of the form or that $\nu_1$ and $\nu_2$ are the spectral parameters of the form. We then have the following relations between the Langlands parameters and the spectral parameters: $\alpha_1= 2\nu_1+\nu_2$, $\alpha_2= -\nu_1 + \nu_2$, $\alpha_3= -\nu_1-2\nu_2$. It is also convenient to define $\nu_3:=-\nu_1-\nu_2$. For any finite prime $p$, the local representation $\pi_p$ has a $\mathrm{GL}_3(\mathbb Z_p)$-fixed vector and the space of such vectors is one-dimensional. The local Hecke algebra $\mathcal H_p$ acts on this space of invariant vectors. The Satake isomorphism tells that the algebra $\mathcal H_p$ has three generators and the eigenvalues of these generators when acting on the space of $\mathrm{GL}_3(\mathbb Z_p)$-fixed vectors are called the Satake parameters of $\phi$ at $p$. They will be denoted by $\alpha_{1,p}(\phi)$, $\alpha_{2,p}(\phi)$ and $\alpha_{3,p}(\phi)$. Sometimes we will write $\alpha_\phi(p):=(\alpha_{1,p}(\phi), \alpha_{2,p}(\phi),\alpha_{3,p}(\phi))\in\mathbb C^3$.

With the above normalisation, $\phi$ is an eigenform of the Laplacian with eigenvalue
\begin{align*}
1-3\nu_1^2-3\nu_1\nu_2-3\nu_2^2 = 1-\frac12(\alpha_1^2+\alpha_2^2+\alpha_3^2).
\end{align*}
The Ramanujan-Petersson conjecture predicts that the Langlands parameters $\alpha_1,\alpha_2,\alpha_3$ of Maass cusp forms are purely imaginary (equivalently, the spectral parameters $\nu_1,\nu_2$ are purely imaginary). A Maass cusp form is called exceptional if it violates the Ramanujan-Petersson conjecture. An equivalent formulation of the Ramanujan-Petersson conjecture in terms of the Fourier coefficients is that $|A(p,1)|\leq 3$ for any prime $p$ (similarly, in the general $\mathrm{GL}_n$-setting the conjecture states $|A(p,1,...,1)|\leq n$ for all primes $p$). We shall write bounds towards the Ramanujan-Petersson conjecture as $|A(p,1)|\leq 3p^\vartheta$ for some $\vartheta\geq 0$. Currently it is only known that $\vartheta\leq 5/14$ unconditionally \cite{Kim-Sarnak}, but the Ramanujan-Petersson conjecture of course predicts that $\vartheta=0$ is admissible (for classical Maass cusp forms we currently know that $|\lambda(p)|\leq 2p^{7/64}$ unconditionally). However, for any fixed prime $p$, the Ramanujan-Petersson conjecture in $\mathrm{GL}_3$ is known to hold for almost all forms meaning that the density of the set of exceptional forms is zero as was recently shown by Lau, Ng, and Wang \cite{Lau-Ng-Wang-2020}. We formulate a version of their result, which is suitable for our needs. To state this we need some notation. Let $\Omega\subset i\mathfrak a_{\mathbb C}^*$ be a compact Weyl-group invariant subset disjoint from the Weyl chamber walls $\{\mu\in\mathfrak a_{\mathbb C}^*:w.\mu=\mu\,\text{for some }w\in W,w\neq 1\}$ and $T>1$ be a large parameter. Fix some $\nu_0\in\Omega$. For $\nu\in\mathfrak a_{\mathbb C}^*$ we set $\psi(\nu):=\exp(3(\nu_1^2+\nu_2^2+\nu_3^2))$ and
\begin{align*}
P(\nu):=\prod_{0\leq n\leq A}\prod_{j=1}^3\frac{\nu_j^2-\frac19(1+2n)^2}{T^2}
\end{align*}
for some large, fixed $A$ to compensate poles of the spectral measure in a large tube. Now we define the weight function
\begin{align}\label{weight-function}
h_T(\nu):=P(\nu)^2\left(\sum_{w\in W}\psi\left(\frac{w.\nu-T\nu_0}{T^{1-\eta}}\right)\right)^2
\end{align}
for some very small $\eta>0$. Note that $h_T$ is non-negative and $h_T(\nu_\phi)\asymp 1$ for $\phi\in\widetilde{\mathcal H}_T$. We write $X_p$ for the set of Hecke-Maass cusp forms for the group $\mathrm{SL}_3(\mathbb Z)$ not satisfying the Ramanujan-Petersson conjecture at $p$. Then it follows from \cite{Lau-Ng-Wang-2020} that
\begin{align}\label{exceptions}
\sum_{\phi\in X_p}h_T(\nu_\phi)\ll\left(\sum_\phi h_T(\nu_\phi)\right)\left(\frac{\log p}{\log T}\right)^3.
\end{align} 
Let $\phi$ be a Maass cusp form of type $(\nu_1,\nu_2)\in\mathbb C^{2}$ for the group $\mathrm{SL}_3(\mathbb Z)$. By analogue to the classical situation, it follows that for every integer $m\geq 1$, there is a Hecke operator given by
\begin{align*}
T_m\phi(z):=\frac1{m^{5/2}}\sum_{\substack{\prod_{\ell=1}^3 c_\ell=m\\
0\leq c_{i,\ell}<c_\ell\,\,(1\leq i<\ell\leq 3)}}\phi\left(\begin{pmatrix}
c_1 & c_{1,2} & c_{1,3}\\
 & c_2 & c_{2,3}\\
 & & c_3
\end{pmatrix}\cdot z\right)
\end{align*}
acting on the space $L^2(\mathrm{SL}_3(\mathbb Z)\backslash\mathrm{SL}_3(\mathbb R)/\mathrm{SO}_3)$ of square-integrable automorphic functions. Unlike in the classical situation, these operators are not self-adjoint, but they are normal. If a Maass cusp form $\phi$ is an eigenfunction of every Hecke operator, it is called a Hecke-Maass cusp form. We remark that if the Fourier coefficient $A(1,1)$ is zero, then the form vanishes identically. For more about the theory of Hecke operators for the group $\mathrm{SL}_3(\mathbb Z)$, see \cite[Section 9.3.]{Goldfeld}.

Fourier coefficients and Satake parameters of a given Hecke-Maass cusp form are closely related by the work of Shintani \cite{Shintani} together with results of Casselman and Shalika \cite{Casselman-Shalika}. They showed that for any prime number $p$ and $\beta_1,\beta_2\in\mathbb Z_+\cup\{0\}$ one has
\begin{align}\label{Shintani}
A_\phi(p^{\beta_1},p^{\beta_{2}})=S_{\beta_{2},\beta_1}(\alpha_{1,p}(\phi),\alpha_{2,p}(\phi),\alpha_{3,p}(\phi)), 
\end{align} 
where 
\begin{align}\label{Schur}
S_{\beta_{2},\beta_1}(x_1,x_2,x_3):=\frac1{V(x_1,x_2,x_3)}\det\left[\begin{pmatrix}
x_1^{2+\beta_{2}+\beta_{1}} & x_2^{2+\beta_{2}+\beta_{1}} & x_3^{2+\beta_{2}+\beta_{1}}\\
x_1^{1+\beta_{2}} & x_2^{1+\beta_{2}} & x_3^{1+\beta_{2}}\\
1 & 1 & 1
\end{pmatrix}
\right]
\end{align}
is a Schur polynomial, and $V(x_1,x_2,x_3)$ is the Vandermonde determinant given by
\begin{align*}
V(x_1,x_2,x_3):=\prod_{1\leq i<j\leq 3}(x_i-x_j).
\end{align*}
%Kowalski and Ricotta proved in \cite[Proposition B.1]{Kowalski-Ricotta} that there exists a polynomial $P_n(\textbf{x},\textbf{y},T)$, where $\textbf{x}=(x_1,...,x_n)$, $\textbf{y}=(y_1,...,y_n)$, and $T$ are indeterminates, such that
%\begin{align}\label{schuridentity}
%\sum_{k\geq 0}S_{0,...,0,k}(\textbf{x})S_{0,...,0,k}(\textbf{y})T^k=\frac{P_n(\textbf{x},\textbf{y},T)}{\prod_{1\leq j,k\leq n}(1-x_jy_kT)}.
%\end{align}
Next, we define an important notion of a dual Maass cusp form. Let 
\begin{align*}
\widetilde \phi(z):=\phi(w\cdot\,^t(z^{-1})\cdot w),\quad\text{where} \qquad w:=\begin{pmatrix}
& & -1  \\
& 1 & \\
 1 & & 
\end{pmatrix}.   
\end{align*}
Then $\widetilde \phi$ is a Maass cusp form of type $(\nu_{2},\nu_1)\in\mathbb C^{2}$ for the group $\mathrm{SL}_3(\mathbb Z)$ and it is called the dual Maass cusp form of $\phi$. We say that Hecke-Maass form $\phi$ is self-dual if $\phi=\widetilde\phi$. It turns out that 
\begin{align}\label{dualmaasscoeff}
A_\phi(m_1,m_{2})=A_{\widetilde \phi}(m_{2},m_1)
\end{align}
for every $m_1,m_{2}\geq 1$. 

The Fourier coefficients of a Hecke-Maass cusp form satisfy the multiplicativity relation
\begin{align}\label{Hecke-relation1}
A(m,1)A(m_1,m_{2})=\sum_{\substack{\prod_{\ell=1}^3 c_\ell=m\\
c_j|m_j\,\,\text{for }1\leq j\leq 2}}A\left(\frac{m_1c_3}{c_1},\frac{m_2c_1}{c_2}\right),
\end{align}
which, together with M\"obius inversion, gives
\begin{align}\label{Mobius-relation}
A(m_1,m_2)=\sum_{d|(m_1,m_2)}\mu(d)A\left(\frac{m_1}d,1\right)A\left(1,\frac{m_2}d\right)
\end{align}
for positive integers $m,m_1$, and a non-negative integer $m_{2}$. Furthermore, the relation
\begin{align*}
A(m_1,m_{2})A(m_1',m_{2}')=A(m_1m_1',m_{2}m_{2}')
\end{align*}
holds if $(m_1 m_{2},m_1'm_{2}')=1$. In particular, the coefficients $A(m,1)$ and $A(m,m)$ are multiplicative. For the proofs of these facts, see \cite[Theorem 9.3.11.]{Goldfeld}

For a Hecke eigenfunction, one can use M\"obius inversion to show that the relation
\begin{align*}
A(m_1,m_{2})=\overline{A(m_2,m_1)}
\end{align*} 
holds \cite[Theorem 9.3.6, Theorem 9.3.11, Addendum]{Goldfeld}. In particular, together with the relation (\ref{dualmaasscoeff}) this yields that 
\begin{align*}
\overline{A_\phi(m,1)}=A_{\widetilde \phi}(m,1),
\end{align*}
and consequently the Hecke eigenvalues $A(m,1)$ are real-valued for self-dual forms. The fact that $A(m,m)$ is always real-valued follows from (\ref{Mobius-relation}).

Associated to the form $\phi$ is the $L$-series given by
\begin{align*}
L(s,\phi):=\sum_{m=1}^\infty\frac{A_\phi(m,1)}{m^s},
\end{align*}
which converges for $\sigma>1$. This has an entire continuation to the whole complex
plane via the functional equation
\begin{align}\label{functional-equation}
L(s,\phi)=\pi^{3s-3/2}\frac{G(1-s,\widetilde \phi)}{G(s,\phi)} L(1-s,\widetilde \phi),
\end{align}
where
\begin{align*}
G(s,\phi):=\prod_{j=1}^3\Gamma\left(\frac{s-\alpha_j}2\right)\quad\text{and so }\quad G(s,\widetilde \phi)=\prod_{j=1}^3\Gamma\left(\frac{s-\widetilde\alpha_j}2\right).
\end{align*}
Recall that here $\alpha_j$ and $\widetilde\alpha_j$ are the Langlands parameters of $\phi$ and $\widetilde \phi$, respectively. This
defines an $L$-function attached to the form $\phi$ called the Godement-Jacquet $L$-function. For more information, see \cite[Chapter 6.5.]{Goldfeld}. The generalised Lindel\"of hypothesis predicts that 
\begin{align*}
L\left(\frac12+it,\phi\right)\ll_\varepsilon(3+|t|)^{\varepsilon}
\end{align*} 
for any $\varepsilon>0$. Currently we know that 
\begin{align*}
L\left(\frac12+it,\phi\right)\ll_\varepsilon(3+|t|)^{3/5+\varepsilon}
\end{align*}
for self-dual $\phi$ \cite{ref2}.

In what follows we will make use of the following result of Jiang and L\"u \cite{JiangLu}.

\begin{lemma}\label{Jiang-Lu}
For a self-dual Hecke eigenform we have
\begin{align*}
\sum_{m\leq x}|A(m,1)|\gg_\varepsilon x^{1-\varepsilon}
\end{align*} 
for any $\varepsilon>0$.
\end{lemma}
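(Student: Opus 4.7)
The plan is to combine a lower bound on $\sum_{m\leq x}|A(m,1)|^2$ (via Rankin-Selberg theory) with an upper bound on $\sum_{m\leq x}|A(m,1)|^3$ (via the underlying $\mathrm{GL}_2$ form), using Cauchy-Schwarz.

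First I would establish $\sum_{m\leq x}|A(m,1)|^2\gg x$. The Rankin-Selberg $L$-function $L(s,\phi\times\widetilde\phi)$, which for self-dual $\phi$ equals $L(s,\phi\times\phi)=\zeta(s)L(s,\mathrm{Ad}\,\phi)$, has a simple pole at $s=1$ with positive residue. Its Dirichlet coefficient at a prime $p$ is $|A(p,1)|^2$, and the Hecke-relation identity $|A(m,1)|^2=\sum_{d\mid m}A(d,d)$ (which follows, for any Hecke-Maass form on $\mathrm{GL}_3$, from expanding $A(m,1)A(1,m)$ via (\ref{Hecke-relation1}) and then using $\overline{A(m,1)}=A(1,m)$) yields $\sum_m|A(m,1)|^2 m^{-s}=\zeta(s)\sum_d A(d,d)d^{-s}$; this Dirichlet series inherits a simple pole at $s=1$ with positive residue, and the lower bound follows from the Wiener-Ikehara Tauberian theorem applied to the non-negative multiplicative function $|A(m,1)|^2$.

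Next I would establish $\sum_{m\leq x}|A(m,1)|^3\ll_\varepsilon x^{1+\varepsilon}$. By the Gelbart-Jacquet theorem, every self-dual cuspidal $\phi$ on $\mathrm{GL}_3$ arises as $\phi=\mathrm{sym}^2 f$ for some Hecke-Maass cusp form $f$ on $\mathrm{GL}_2$, so $A_\phi(p,1)=\lambda_f(p)^2-1$ at primes. The elementary estimate $|A_\phi(p,1)|\leq\lambda_f(p)^2+1$ then gives
\begin{align*}
\sum_{p\leq x}|A_\phi(p,1)|^3\leq\sum_{p\leq x}\bigl(\lambda_f(p)^2+1\bigr)^3=\sum_{p\leq x}\bigl(\lambda_f(p)^6+3\lambda_f(p)^4+3\lambda_f(p)^2+1\bigr)\ll\pi(x),
\end{align*}
where the last bound uses the automorphy of $\mathrm{sym}^{2k}f$ for $k\leq 3$ (Clozel-Thorne, building on Kim-Shahidi and Kim), which ensures that $L(s,\mathrm{sym}^{2k}f)$ is holomorphic at $s=1$ for $k\geq 1$. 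Since $|A(m,1)|^3$ is multiplicative, a Shiu-type or Selberg-Delange argument promotes this prime bound to $\sum_{m\leq x}|A(m,1)|^3\ll x(\log x)^{O(1)}\ll_\varepsilon x^{1+\varepsilon}$.

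Finally, Cauchy-Schwarz applied to $|A|^2=|A|^{1/2}\cdot|A|^{3/2}$ yields $(\sum|A|^2)^2\leq(\sum|A|)(\sum|A|^3)$, hence
\begin{align*}
\sum_{m\leq x}|A(m,1)|\geq\frac{\bigl(\sum_{m\leq x}|A(m,1)|^2\bigr)^2}{\sum_{m\leq x}|A(m,1)|^3}\gg\frac{x^2}{x^{1+\varepsilon}}=x^{1-\varepsilon},
\end{align*}
proving the lemma. The main obstacle is the 3rd-moment bound: the choice of the 3rd power (rather than the 4th) is deliberate, since it minimises the symmetric-power automorphy required, using only $\mathrm{sym}^{\leq 6}f$ (Clozel-Thorne) rather than $\mathrm{sym}^8 f$ (Newton-Thorne). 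Self-duality is used in an essential way both in the Rankin-Selberg step and in the Gelbart-Jacquet reduction to a $\mathrm{GL}_2$ problem; for non-self-dual $\mathrm{GL}_3$ forms no analogous argument is currently available.
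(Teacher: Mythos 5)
Your strategy --- a Rankin--Selberg second-moment lower bound plus a third-moment upper bound, combined by Cauchy--Schwarz --- is sound in outline and close in spirit to the cited Jiang--L\"u result, who instead bound the \emph{fourth} moment $\sum_{m\leq x}|A(m,1)|^4\ll_\varepsilon x^{1+\varepsilon}$ and apply H\"older; both routes deliver the stated $x^{1-\varepsilon}$.

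There is, however, a genuine gap in your third-moment estimate. To bound $\sum_{p\leq x}\lambda_f(p)^6$ you invoke automorphy of $\mathrm{sym}^6 f$ via Clozel--Thorne, but those results (and Newton--Thorne's) are for cohomological, i.e.\ holomorphic, $\mathrm{GL}_2$ forms. By Ramakrishnan's theorem used in the paper, the $f$ you need here is a level-one $\mathrm{GL}_2$ \emph{Maass} cusp form, and for Maass forms the automorphy of $\mathrm{sym}^n f$ for $n\geq 5$ remains open; so the step as written is not justified. Fortunately the gap is repairable without $\mathrm{sym}^{\geq 5}$: writing $\lambda_f(p)^3=\lambda_{\mathrm{sym}^3 f}(p)+2\lambda_f(p)$ (Kim--Shahidi) gives $\lambda_f(p)^6=\lambda_{\mathrm{sym}^3 f}(p)^2+4\lambda_{\mathrm{sym}^3 f}(p)\lambda_f(p)+4\lambda_f(p)^2$, and each piece is controlled at primes by a Rankin--Selberg $L$-function built from $f$ and $\mathrm{sym}^3 f$, namely $L(s,\mathrm{sym}^3 f\times\mathrm{sym}^3 f)$, $L(s,\mathrm{sym}^3 f\times f)=L(s,\mathrm{sym}^4 f)L(s,\mathrm{sym}^2 f)$, and $L(s,f\times f)$ --- all available unconditionally for Maass forms using only $\mathrm{sym}^{\leq 4}$ (Gelbart--Jacquet, Kim--Shahidi, Kim). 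The term $\lambda_f(p)^4$ is handled analogously via $L(s,\mathrm{sym}^2 f\times\mathrm{sym}^2 f)$. You should also make explicit the prime-power input needed for the Shiu/Selberg--Delange promotion: from the Kim--Sarnak $\mathrm{GL}_2$ bound one gets $|A_\phi(p^j,1)|\ll_j p^{7j/32}$, and since $3\cdot 7/32<1$ the tails $\sum_{j\geq 2}|A_\phi(p^j,1)|^3/p^j$ converge uniformly in $p$. With these repairs your route is correct.
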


\begin{remark}\label{lower-bound}
This improves the bound $\gg_\varepsilon x^{1-\vartheta-\varepsilon}$, which follows from the Rankin-Selberg theory and the pointwise bound towards the Ramanujan-Petersson conjecture. 
\end{remark}
\noindent The proof of Theorem \ref{Hecke-aa} is based on a reduction to the lower rank setting. This is achieved by the following result of Ramakrishnan \cite{Ramakrishnan}. 

\begin{lemma} There is a bijection between the set of non-dihedral Maass cusp forms for the group $\mathrm{SL}_2(\mathbb Z)$ and the set of self-dual Maass cusp forms for the group $\mathrm{SL}_3(\mathbb Z$) given by the symmetric square lift. 
\end{lemma}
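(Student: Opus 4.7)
The plan is to establish the bijection through the symmetric square functorial lift $\mathrm{Sym}^2 \colon \pi \mapsto \mathrm{Sym}^2 \pi$ from $\mathrm{GL}_2$ to $\mathrm{GL}_3$. First I would construct this lift via the Gelbart--Jacquet theorem: given a cuspidal automorphic representation $\pi$ of $\mathrm{GL}_2(\mathbb{A}_\mathbb{Q})$ with trivial central character, the representation $\mathrm{Sym}^2 \pi$ of $\mathrm{GL}_3(\mathbb{A}_\mathbb{Q})$ is characterised locally at unramified places by having Satake parameters $\{\alpha_v^2, 1, \beta_v^2\}$ whenever $\pi_v$ has Satake parameters $\{\alpha_v, \beta_v\}$. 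Automorphicity of this candidate is produced by the converse theorem of Jacquet--Piatetski-Shapiro--Shalika, applied to the symmetric square $L$-function whose analytic continuation and functional equation were established by Shimura and (in the twisted setting needed for the converse theorem) by Shimura's method combined with the Rankin--Selberg integrals for $\mathrm{GL}_3 \times \mathrm{GL}_1$ and $\mathrm{GL}_3 \times \mathrm{GL}_2$.

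Next I would show that the image lands in self-dual cuspidal representations on $\mathrm{GL}_3$. Self-duality of $\mathrm{Sym}^2 \pi$ is immediate from the fact that when $\omega_\pi = 1$ one has $\widetilde{\pi} \cong \pi$, and the symmetric square commutes with contragredient up to central twist. Cuspidality of $\mathrm{Sym}^2 \pi$ is equivalent to $\pi$ being non-dihedral: if $\pi$ is induced from an idele class character $\chi$ of a quadratic extension $K/\mathbb{Q}$, then $\mathrm{Sym}^2 \pi$ splits as an isobaric sum involving the automorphic induction of $\chi^2$ together with a one-dimensional piece, hence is non-cuspidal; conversely, non-cuspidality of $\mathrm{Sym}^2 \pi$ forces $\pi$ to admit a self-twist by a quadratic character and therefore be dihedral.

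For injectivity I would invoke strong multiplicity one for $\mathrm{GL}_n$. If $\mathrm{Sym}^2 \pi_1 \cong \mathrm{Sym}^2 \pi_2$, then matching Satake parameters at almost all primes yields $\{\alpha_{1,v}^2, \beta_{1,v}^2\} = \{\alpha_{2,v}^2, \beta_{2,v}^2\}$, which forces $\pi_2 \cong \pi_1 \otimes \chi$ for some quadratic idele class character $\chi$. Since $\pi_1$ is non-dihedral, no such non-trivial self-twist exists, so $\pi_1 \cong \pi_2$. Translating back to the classical language via the dictionary between cuspidal automorphic representations and Hecke--Maass cusp forms then gives the bijection at the level of forms.

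The main obstacle, and the deep content of Ramakrishnan's contribution, is surjectivity: given a self-dual cuspidal automorphic representation $\Pi$ of $\mathrm{GL}_3(\mathbb{A}_\mathbb{Q})$ with trivial central character, one must produce a non-dihedral $\pi$ on $\mathrm{GL}_2$ with $\Pi \cong \mathrm{Sym}^2 \pi$. The strategy is to use that $\Pi$ is self-dual with trivial central character, so it is of orthogonal type, which by the work of Jacquet--Shalika and Bump--Ginzburg is equivalent to the exterior square $L$-function $L(s, \Pi, \wedge^2)$ having a pole at $s = 1$. Theta-correspondence / descent from $\mathrm{GL}_3$ to the split form of $\mathrm{SO}_3(\mathbb{A}_\mathbb{Q}) \cong \mathrm{PGL}_2(\mathbb{A}_\mathbb{Q})$ then yields a generic cuspidal representation $\pi$ whose symmetric square lift recovers $\Pi$; the non-dihedrality of $\pi$ is forced by the cuspidality of $\Pi$ together with the analysis in the second step.
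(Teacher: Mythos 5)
The paper does not prove this lemma at all; it simply cites Ramakrishnan's paper \cite{Ramakrishnan} and records, in the accompanying remark, that the ``adjoint square'' lift of Ramakrishnan's statement coincides with the symmetric square lift because the forms in question have trivial central character. Your proposal, by contrast, is an outline of what is actually inside Ramakrishnan's theorem: Gelbart--Jacquet for existence and the cuspidality dichotomy, strong multiplicity one plus a twist classification for injectivity, and descent from orthogonal $\mathrm{GL}_3$ to $\mathrm{PGL}_2\cong\mathrm{SO}_3$ for surjectivity. That is a reasonable and essentially faithful account of the machinery, and it would be a legitimate way to reconstruct the cited result rather than quote it; surjectivity is indeed the nontrivial part and your description of it (self-duality plus trivial central character forces orthogonality, i.e.\ a pole of $L(s,\Pi,\wedge^2)$, and one descends to $\mathrm{SO}_3$) is the right picture.

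There is, however, a genuine error in your injectivity step. From $\mathrm{Sym}^2\pi_1\cong\mathrm{Sym}^2\pi_2$ and Ramakrishnan's twist classification one gets $\pi_2\cong\pi_1\otimes\chi$ for some quadratic idele class character $\chi$. You then conclude $\pi_1\cong\pi_2$ ``since $\pi_1$ is non-dihedral, no such non-trivial self-twist exists,'' but this reasons in the wrong direction: non-dihedrality says precisely that $\pi_1\otimes\chi\not\cong\pi_1$ for every non-trivial $\chi$, which makes the fibre of $\mathrm{Sym}^2$ as large as possible (it consists of \emph{all} the twists $\pi_1\otimes\chi$), not as small as possible. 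What actually forces $\chi=1$ is not non-dihedrality but the level-one hypothesis: both $\pi_1$ and $\pi_2$ are spherical, everywhere unramified automorphic representations attached to Maass cusp forms for $\mathrm{SL}_2(\mathbb Z)$, so any character $\chi$ with $\pi_2\cong\pi_1\otimes\chi$ must itself be unramified at every place of $\mathbb Q$, and the only such quadratic Hecke character of $\mathbb Q$ is trivial. With that correction the injectivity argument is fine; the rest of your outline is consistent with the content of the result the paper is invoking.
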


\begin{remark}
This follows from Ramakrishnan's main result as cusp forms we consider are assumed to have a trivial central character in which case the adjoint square lift is identical to the symmetric square lift.
\end{remark}

\noindent Moreover, if a self-dual Hecke-Maass cusp form $\phi$ on $\mathrm{GL}_3$ is a symmetric square lift of a Hecke-Maass cusp form $g$ on $\mathrm{GL}_2$, then their Fourier coefficients at primes are related by the relation $A_\phi(p,1)=\lambda_g(p^2)=\lambda_g(p)^2-1$.   

Let us then discuss the measures involved in this article following \cite{Buttcane-Zhou}. Let $\Phi$ be the root system of $\mathrm{SL}_3(\mathbb C)$ and $\Phi^+$ be the set of positive roots. Write $\rho:=\frac12\sum_{\alpha\in\Phi^+}\alpha$ for the half-sum of positive roots. Let $\mathcal P^+$ be the positive Weyl chamber of dominant weights. Finally, let $p$ be a prime number and let $\mathrm d s$ be the normalised Haar measure on
\begin{align*}
T_0:=\left\{(e^{i\theta_1},e^{i\theta_2},e^{i\theta_3}):\,e^{i(\theta_1+\theta_2+\theta_3)}=1\right\}\subset\mathrm{SL}_3(\mathbb C).
\end{align*}

\noindent Then the (generalised) Sato-Tate measure on $T_0/W$ is given by
\begin{align*}
\mathrm d \mu_\infty:=\frac1{|W|}\prod_{\beta\in\Phi}\left(1-e^\beta(s)\right)\mathrm d s.
\end{align*}
The $p$-adic Plancherel measure on $\mathrm{SL}_3(\mathbb C)$, which is supported on $T_0/W$, is given by
\begin{align}\label{p-adic-plancherel}
\mathrm d \mu_p:=\frac{W(p^{-1})}{\prod_{\beta\in\Phi}\left(1-p^{-1}e^{\beta}(s)\right)}\mathrm d \mu_\infty
\end{align}
with
\begin{align*}
W(q):=\sum_{w\in W}q^{\text{length}(w)},
\end{align*} 
where the length of $w\in W$ is the smallest $\ell\in\mathbb N$ such that $w$ is a product of $\ell$ reflections by simple roots. 

Explicitly, integration with respect to these measures is given by
\begin{align*}
\int_{T_0/W}f\,\mathrm d \mu_\infty=\frac1{24\pi^2}\int\limits_0^{2\pi}\int\limits_0^{2\pi}f(\theta_1,\theta_2)\prod_{1\leq \ell<j\leq 3}\left|e^{i\theta_\ell}-e^{i\theta_j}\right|^2\,\mathrm d\theta_1\mathrm d\theta_2
\end{align*}
and
\begin{align}\label{explicit-formula}
\int_{T_0/W}f\,\mathrm d \mu_p=\frac{(1-p^{-2})(1-p^{-3})}{6(1-p^{-1})^2}\int\limits_0^{2\pi}\int\limits_0^{2\pi}f(\theta_1,\theta_2)\prod_{1\leq \ell<j\leq 3}\left|\frac{e^{i\theta_\ell}-p^{-1}e^{i\theta_j}}{e^{i\theta_\ell}-e^{i\theta_j}}\right|^{-2}\mathrm d \theta_1\mathrm d \theta_2
\end{align}
for any continuous function $f:T_0/W\longrightarrow\mathbb C$, respectively. Here we have identified $T_0/W$ with a subset of $\{(e^{i\theta_1},e^{i\theta_2}):\theta_1,\theta_2\in[0,2\pi]\}$. For discussion on the support of the measure $\mathrm d \mu_p$, see \cite{Blomer-Buttcane-Raulf}.

Next we need some facts about Kazhdan-Lusztig polynomials. For a symbol $q$ and weight $\beta$ we define the Kostant $q$-partition by
\begin{align*}
P_q(\beta):=\sum_{\substack{\beta=\sum n(\alpha)\alpha\\
\alpha\in\Phi^+,\,n(\alpha)\geq 0}}q^{\sum n(\alpha)}
\end{align*}
and the Kazhdan-Lusztig $q$-polynomial by
\begin{align*}
\mathfrak M_\lambda^\beta(q):=\sum_{w\in W}(-1)^{\text{length}(w)}P_q(w.(\lambda+\rho)-(\beta+\rho))
\end{align*}
for any root $\lambda$.

%We shall need the fact \cite[Lemma 2.3.]{Buttcane-Zhou} that 
%\begin{align*}
%\mathfrak M_{\aleph(\ell_2,\ell_1)}^0(q)=\begin{cases}
%\sum_{i=\max(\ell_1,\ell_2)}^{\ell_1+\ell_2}q^i\quad\text{if }3|\ell_1-\ell_2\\
%0\qquad\quad\qquad\qquad\text{otherwise}
%\end{cases}
%\end{align*}
Let $\ell_1,\ell_2\geq 0$ be natural numbers. We write $\aleph(\ell_2,\ell_1):=\lambda_1\ell_1+\lambda_2\ell_2$ with $\lambda_1$ being the highest weight in $\mathcal P^+$ for the standard inclusion $\mathrm{SL}_3(\mathbb C)\hookrightarrow\mathrm{GL}_3(\mathbb C)$ and $\lambda_2$ is the highest weight in $\mathcal P^+$ for the exterior power representation $\wedge^2 V_{\lambda_1}$. A crucial fact for us is that $\mathfrak M_{\aleph(\ell_2,\ell_1)}^0(p^{-1})$ is related to Schur polynomials by the formula 
\begin{align}\label{Kato}
\mathfrak M_{\aleph(\ell_2,\ell_1)}^0(p^{-1})=\int_{T_0/W}S_{\ell_1,\ell_2}(\sigma)\,\mathrm d \mu_p(\sigma),
\end{align}
which can be found in the work of Kato \cite{Kato}.

We recall the following result due to Buttcane and Zhou \cite{Buttcane-Zhou}. The spectral measure, which counts the number of Maass cusp forms, is given by
\begin{align*}
\text{spec}(\nu)\,\mathrm d \nu:=\frac3{256\pi^5}\prod_{j=1}^3\left(3\nu_j\tan\left(\frac{3\pi}2\nu_j\right)\right)\,\mathrm d \nu_1\mathrm d \nu_2,
\end{align*}
where $\nu=(\nu_1,\nu_2)\in\mathbb C^2$ and $\nu_3=-\nu_1-\nu_2$.
% Let $\Omega\subset i\mathfrak a^*$ be a compact Weyl group-invariant subset, which is disjoint from the walls of the Weyl chamber and $\nu_0\in\Omega$. Let $T>1$ be a parameter, $\psi(\nu):=\exp(3(\nu_1^2+\nu_2^2+\nu_3^2))$, and set
%\begin{align*}
%P(\nu):=\prod_{0\leq n\leq A}\prod_{j=1}^3\frac{\nu_j^2-\frac19(1+2n)^2}{T^2}
%\end{align*}
%for some large integer $A$. Then we define
%\begin{align*}
%h_T(\nu):=P(\nu)^2\left(\sum_{w\in W}\psi\left(\frac{W.\nu-T\nu_0}{T^{1-\delta}}\right)\right)^2
%\end{align*}
%for some small fixed $0<\delta<1/2$. Notice that $T^\delta$ such test functions majorises the characteristic function of the set $T\Omega$. 

Let $h_T$ be the weight function defined in (\ref{weight-function}). Then we have the following \cite[Theorem 3.1.]{Buttcane-Zhou}.
\begin{lemma}\label{asymptotic}
For integers $\ell_1,\ell_2\geq 0$ we have
\begin{align*}
\sum_\phi A_\phi(p^{\ell_1},p^{\ell_2})h_T(\nu_\phi)=\mathfrak M_{\aleph(\ell_2,\ell_1)}^0(p^{-1})\left(\int_{\Re\nu=0}h_T(\nu)\text{spec}(\nu)\,\mathrm d\nu\right)+O\left(T^{14/3+\varepsilon}p^{(\ell_1+\ell_2)/2+\varepsilon}\right)
\end{align*}
for any $\varepsilon>0$.
\end{lemma}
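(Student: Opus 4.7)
The plan is to apply the $\mathrm{GL}_3$ Kuznetsov trace formula with $h_T$ as the archimedean test function on the spectral side and the Hecke eigenvalue $A_\phi(p^{\ell_1},p^{\ell_2})$ inserted as an additional weight. The formula gives, schematically,
\[
\sum_\phi A_\phi(p^{\ell_1},p^{\ell_2})\,h_T(\nu_\phi) + \mathcal E \;=\; \Delta_{\mathrm{id}} + \sum_{w\neq e} \Delta_w,
\]
where $\Delta_{\mathrm{id}}$ is the identity/diagonal contribution, the $\Delta_w$ are the contributions indexed by the non-trivial relevant Weyl elements and involve $\mathrm{GL}_3$ Kloosterman sums paired against integral transforms of $h_T$, and $\mathcal E$ is the contribution of the continuous and residual spectrum.

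For the main term, I would use the Shintani-Casselman-Shalika identity (\ref{Shintani}) to rewrite $A_\phi(p^{\ell_1},p^{\ell_2})$ as the Schur polynomial $S_{\ell_2,\ell_1}$ evaluated at the Satake parameters. Spherical Plancherel inversion then factorises the identity contribution as
\[
\Delta_{\mathrm{id}} \;=\; \left(\int_{T_0/W} S_{\ell_2,\ell_1}\,\mathrm d\mu_p\right)\left(\int_{\Re\nu=0} h_T(\nu)\,\text{spec}(\nu)\,\mathrm d\nu\right).
\]
By Kato's identity (\ref{Kato}), the first factor equals $\mathfrak M^0_{\aleph(\ell_2,\ell_1)}(p^{-1})$, yielding exactly the claimed main term.

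For the error, the Eisenstein/residual contribution $\mathcal E$ is controlled by standard Rankin-Selberg estimates for the Fourier coefficients of $\mathrm{GL}_3$ Eisenstein series (built either from $\mathrm{GL}_2$ cusp forms or from Hecke characters) and is much smaller than the stated bound. For each non-trivial relevant Weyl element $w$, the contribution $\Delta_w$ is a sum of $\mathrm{GL}_3$ Kloosterman sums against a Bessel/Whittaker transform of $h_T$. Weil-type bounds for the Kloosterman sums supply the $p^{(\ell_1+\ell_2)/2+\varepsilon}$ factor, while stationary-phase analysis of the transform against $h_T$, which is supported in a ball of radius $T^{1-\eta}$ around $T\nu_0$ in the two-dimensional spectral parameter space, supplies the $T^{14/3+\varepsilon}$ savings.

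The main obstacle is controlling the long-element Kloosterman transform of $h_T$. The corresponding integral transform is an oscillatory double integral whose asymptotic analysis requires delicate contour manipulation combined with stationary phase on the two-torus of spectral parameters; the polynomial prefactor $P(\nu)^2$ in the definition (\ref{weight-function}) of $h_T$ is included precisely to compensate the poles of the spectral measure $\text{spec}(\nu)$ that would otherwise be encountered when shifting contours off the tempered line. Pushing this transform analysis through uniformly in $p$ while retaining the polynomial-in-$T$ saving is the technical heart of the argument.
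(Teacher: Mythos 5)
The paper does not contain a proof of this lemma at all: it is quoted verbatim as Theorem~3.1 of Buttcane--Zhou \cite{Buttcane-Zhou}, with no argument reproduced. So the only meaningful comparison is with the cited reference. Your sketch correctly identifies the broad framework that Buttcane and Zhou use --- the $\mathrm{GL}_3$ Kuznetsov formula, Shintani--Casselman--Shalika (\ref{Shintani}), Kato's identity (\ref{Kato}), Kloosterman bounds, and stationary-phase analysis of the archimedean transforms, with the polynomial $P(\nu)^2$ in $h_T$ taming the poles of $\mathrm{spec}(\nu)$ under contour shifts.

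There are, however, two points where the sketch is either imprecise or silent on genuine work. First, you attribute the main-term factor $\mathfrak M^0_{\aleph(\ell_2,\ell_1)}(p^{-1})=\int_{T_0/W}S_{\ell_1,\ell_2}\,\mathrm d\mu_p$ to ``the identity/diagonal contribution'' of the Kuznetsov formula together with spherical Plancherel inversion. But in the $\mathrm{GL}_3$ Kuznetsov formula applied with Fourier indices $(p^{\ell_1},p^{\ell_2})$ against $(1,1)$, the delta term on the geometric side is $\delta_{(p^{\ell_1},p^{\ell_2})=(1,1)}$, which vanishes whenever $\ell_1+\ell_2\geq 1$. The $p$-dependent main term therefore does not simply drop out of the identity element; it has to be assembled from the exact evaluation of the Kloosterman contributions at the small moduli dividing $p^{\ell_1+\ell_2}$ (equivalently, from Hecke relations combined with the orthogonality relation), and only then does Kato's formula identify the resulting $p$-dependent combinatorial factor as the $\mu_p$-integral of a Schur polynomial. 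Describing this as ``Plancherel inversion of the identity contribution'' conflates the structure of the Arthur--Selberg trace formula (where a Hecke operator is inserted as a bi-$K$-invariant test function and the identity orbital integral literally is the Plancherel pairing) with the Kuznetsov relative trace formula (where the Hecke data enters through Fourier-coefficient indices). Second, the spectral side of the $\mathrm{GL}_3$ Kuznetsov formula comes with the arithmetic weight $L(1,\mathrm{Ad}^2\phi)^{-1}$, which is absent from the statement of the lemma; removing or absorbing this weight so as to obtain the unweighted sum $\sum_\phi A_\phi(p^{\ell_1},p^{\ell_2})h_T(\nu_\phi)$ with the stated error term is a nontrivial step in Buttcane--Zhou that your outline does not mention.
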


\begin{remark}\label{number-of-forms}
It is known that 
\begin{align*}
\sum_\phi h_T(\nu_\phi)\asymp\int_{\Re\nu=0} h_T(\nu)\text{spec}(\nu)\,\mathrm d \nu\sim C\cdot T^{5-2\eta}
\end{align*}
for some absolute constant $C>0$. 
\end{remark}

\noindent The following result of Lau and Wang \cite{Lau-Wang} will be needed in the proof of Theorem \ref{Hecke-aa}. Let $X_T$ be the set of classical Hecke-Maass cusp forms with spectral parameter bounded by $T$. 
\begin{lemma}\label{Lau-Wang}
Let $p$ be a prime. Then for any interval $[\alpha,\beta]\subset [-2,2]$ we have
\begin{align*}
\frac{\#\left\{g\in X_T:\,\lambda_g(p)\in [\alpha,\beta]\right\}}{\#X_T}=\int\limits_\alpha^\beta\mathrm d \mu_p+O\left(\frac{\log p}{\log T}\right), 
\end{align*}
where $\mathrm d \mu_p$ is the $p$-adic Plancherel measure for the group $\mathrm{SL}_2(\mathbb C)$ (see \cite{Sarnak}). 
\end{lemma}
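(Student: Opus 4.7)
\emph{Proof plan.} The plan is to adapt the Murty--Sinha strategy \cite{Murty-Sinha} for effective Sato--Tate, originally devised in the holomorphic setting, to the Maass form setting. The starting point is that for a tempered unramified form $g$ with $\lambda_g(p)=2\cos\theta_g(p)\in[-2,2]$, the Hecke relations give $\lambda_g(p^n)=U_n(\cos\theta_g(p))$, where $U_n$ is the Chebyshev polynomial of the second kind. Hence any polynomial in $\lambda_g(p)$ can be expanded in the basis $\{U_n\}_{n\geq 0}$, and averaging any such polynomial over $g$ reduces to averaging $\lambda_g(p^n)$ over the family $X_T$.

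First I would approximate the indicator $1_{[\alpha,\beta]}$ from above and below by Beurling--Selberg type polynomials $F_N^{\pm}$ of degree at most $N$ in $\lambda$, normalised so that $\int_{-2}^{2}(F_N^{+}-F_N^{-})\,d\mu_p\ll 1/N$. Expanding $F_N^{\pm}(\lambda)=\sum_{n=0}^{N}c_n^{\pm}U_n(\lambda/2)$ with coefficients $|c_n^{\pm}|\ll 1$, the ratio in the lemma is sandwiched between
\[
\int_{\alpha}^{\beta}d\mu_p+O\left(\frac{1}{N}\right)+\frac{1}{\#X_T}\sum_{n=1}^{N}c_n^{\pm}\sum_{g\in X_T}\lambda_g(p^n),
\]
so the task reduces to non-trivially bounding the spectral moments $\sum_{g\in X_T}\lambda_g(p^n)$ for $1\leq n\leq N$.

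The next step is to apply the Kuznetsov trace formula with a suitable weight isolating spectral parameters of size at most $T$. Since $p^n>1$, the diagonal term vanishes; the Kloosterman term, after Weil's bound and the standard Bessel transform analysis, yields $\sum_{g\in X_T}\lambda_g(p^n)\ll_\varepsilon T^{1+\varepsilon}p^{n/2+\varepsilon}$, while Weyl's law gives $\#X_T\asymp T^2$. Summing over $n\leq N$ contributes $O(Np^{N/2}T^{-1+\varepsilon})$ to the ratio in question, and choosing $N=\lfloor 2\log T/\log p\rfloor$ balances this against the Beurling--Selberg error of size $1/N$, producing the claimed bound $O(\log p/\log T)$.

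The main obstacle is executing the Kuznetsov side so that the geometric error remains genuinely $\ll Tp^{n/2+\varepsilon}$ uniformly for $n$ as large as $2\log T/\log p$: this requires the full square-root cancellation from Weil's bound together with careful control of the Bessel transform against a weight concentrated around spectral parameters of size $T$, and any spurious logarithmic losses must be absorbed into $\varepsilon$ lest they erase the $1/\log T$ gain in the final answer.
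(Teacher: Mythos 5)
The paper does not prove this lemma; it quotes it from Lau--Wang \cite{Lau-Wang} and describes it as a real-analytic analogue of the Murty--Sinha result \cite{Murty-Sinha}. Your plan reconstructs precisely that Murty--Sinha route transplanted to the Maass setting via Kuznetsov, so the overall strategy is the right one.

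One genuine arithmetic slip: with $N=\lfloor 2\log T/\log p\rfloor$ you have $p^{N/2}\approx T$, so the geometric-side contribution to the ratio is $\asymp T^{-2}\cdot T^{1+\varepsilon}p^{N/2+\varepsilon}\asymp T^{\varepsilon}p^{\varepsilon}$, which does not tend to zero and in fact dominates rather than balances. You need $p^{N/2}\ll T^{1-\delta}$ for some fixed $\delta>0$, e.g. $N\asymp \log T/\log p$ with a sufficiently small implied constant; this still leaves $1/N\asymp \log p/\log T$, so the final error is unchanged. Two further points you should spell out to close the argument: (a) the Beurling--Selberg (Vaaler) majorants/minorants are naturally trigonometric polynomials in $\theta$, and passing to the $U_n(\cos\theta)$ basis uses $2\cos n\theta = U_n-U_{n-2}$, after which $|c_n^\pm|\ll 1$ does hold; (b) the spectral cutoff "$|t_g|\leq T$" must be replaced by a smooth weight majorizing/minorizing the indicator before applying Kuznetsov, so Weyl's law $\#X_T\asymp T^2$ and the bound $\sum_g h(t_g)\lambda_g(p^n)\ll T^{1+\varepsilon}p^{n/2+\varepsilon}$ come from the same smoothed sum, and the Eisenstein continuous-spectrum term must be shown to be $O(T\log T\cdot p^{\varepsilon})$ so it is absorbed. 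These are standard but are exactly where uniformity in $n$ up to $\asymp\log T/\log p$ must be checked carefully, as you correctly flag.
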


\begin{remark}
Analogous result was proved earlier by Murty and Sinha \cite{Murty-Sinha} for holomorphic cusp forms. 
\end{remark}

\noindent Recently Matz and Templier \cite{Matz-Templier} have proved equidistribution of the Satake parameters of higher rank Maass cusp forms with respect to the $p$-adic Plancherel measure. We formulate the following version of their result, which is used in the proof of Theorem \ref{Hecke-aa} to treat the forms violating the Ramanujan-Petersson conjecture at a prime $p$ (compare to \cite[Theorem 2.1.]{Lau-Ng-Wang-2020}). Let $\mathbb C[x_1^{\pm},x_2^\pm,x_3^\pm]^W$ be the ring of $W$-invariant Laurent polynomials with three variables.  

\begin{theorem}\label{Matz-Templier}
Let $h_T$ be the same weight function as above. Then, for any $f\in\mathbb C[x_1^{\pm},x_2^\pm,x_3^\pm]^W$ and prime $p$, we have
\begin{align*}
\left|\sum_{\phi}h_T(\nu_\phi)f(\alpha_\phi(p))-\left(\sum_\phi h_T(\nu_\phi)\right)\int_{T_0/W}f(\sigma)\,\mathrm d \mu_p(\sigma)\right|\ll T^{9/2-2\eta}p^{A\cdot\text{deg}'(f)}\|f\|_{\max},
\end{align*}
where the implied constant and the constant $A$ are absolute. Here $\|f\|_{\max}$ denotes the maximum of the absolute values of its coefficients and the constant term. The degree function $\text{deg}'(f)$ denotes the degree when $f$ is expressed in terms of the elementary symmetric
polynomials $e_0,...,e_3$ $(e_0:= 1$ and $e_3:= x_1\cdots x_3)$ with $\text{deg}'(e_0)=\text{deg}'(e_3)=0$ and $\text{deg}(e_i) = 1$ for $1\leq i\leq 2$.
\end{theorem}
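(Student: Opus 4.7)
The plan is to expand the test function $f$ in the $\mathbb C$-basis of Schur polynomials $\{S_{\ell_1,\ell_2}\}_{\ell_1,\ell_2\geq 0}$ and apply Buttcane--Zhou's Lemma \ref{asymptotic} term by term. Because $x_1x_2x_3=1$ on $\mathrm{SL}_3(\mathbb C)$, the ring $\mathbb C[x_1^\pm,x_2^\pm,x_3^\pm]^W$ coincides with $\mathbb C[e_1,e_2]$, and the Schur polynomials (being characters of the irreducibles of $\mathrm{SL}_3(\mathbb C)$) form a basis of it. I would write
\begin{align*}
f=\sum_{\ell_1+\ell_2\leq B\deg'(f)}c_{\ell_1,\ell_2}(f)\,S_{\ell_1,\ell_2}
\end{align*}
for some absolute $B$, and use Pieri's rule (iterated from $e_1=S_{1,0}$ and $e_2=S_{0,1}$) or direct Littlewood--Richardson computations for $\mathrm{SL}_3$ to show that only $O(\deg'(f)^2)$ coefficients are nonzero and that $|c_{\ell_1,\ell_2}(f)|\ll_{\deg'(f)}\|f\|_{\max}$, with at most polynomial dependence on $\deg'(f)$.

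Next, Shintani's identity (\ref{Shintani}) gives $S_{\ell_1,\ell_2}(\alpha_\phi(p))=A_\phi(p^{\ell_2},p^{\ell_1})$. Applying Lemma \ref{asymptotic} with the indices transposed yields, for each $(\ell_1,\ell_2)$,
\begin{align*}
\sum_\phi h_T(\nu_\phi)S_{\ell_1,\ell_2}(\alpha_\phi(p))=\mathfrak M^0_{\aleph(\ell_1,\ell_2)}(p^{-1})\int_{\Re\nu=0}h_T(\nu)\,\mathrm{spec}(\nu)\,\mathrm d\nu+O\bigl(T^{14/3+\varepsilon}p^{(\ell_1+\ell_2)/2+\varepsilon}\bigr).
\end{align*}
Kato's formula (\ref{Kato}) identifies the leading Kazhdan--Lusztig factor as $\int_{T_0/W}S_{\ell_2,\ell_1}\,\mathrm d\mu_p$; since $\mu_p$ is a real measure, $\mathfrak M^0$ is real-valued, and $\overline{S_{\ell_1,\ell_2}}=S_{\ell_2,\ell_1}$ on the compact torus $T_0$, this integral agrees with $\int_{T_0/W}S_{\ell_1,\ell_2}\,\mathrm d\mu_p$. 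By Remark \ref{number-of-forms} the spectral integral may be replaced by $\sum_\phi h_T(\nu_\phi)$ up to negligible error.

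Summing the contributions of the $O(\deg'(f)^2)$ Schur terms, the main terms assemble into the desired product $\bigl(\sum_\phi h_T(\nu_\phi)\bigr)\int_{T_0/W}f\,\mathrm d\mu_p$, while the accumulated error is bounded by
\begin{align*}
\deg'(f)^{O(1)}\,\|f\|_{\max}\cdot T^{14/3+\varepsilon}\cdot p^{A\deg'(f)+\varepsilon}
\end{align*}
for a suitable absolute $A$, after absorbing the polynomial-in-$\deg'(f)$ overhead into the exponent of $p$. This fits the shape of the error asserted in Theorem \ref{Matz-Templier}; obtaining the precise exponent $T^{9/2-2\eta}$ requires selecting $\eta$ small enough (relative to $\varepsilon$) so that the Buttcane--Zhou remainder is absorbed into the claimed bound, together with the observation that $\sum_\phi h_T(\nu_\phi)\asymp T^{5-2\eta}$ dominates the error by a $T$-power.

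The main obstacle is the quantitative Schur expansion step: one must show that the coefficients $c_{\ell_1,\ell_2}(f)$ are bounded by $\|f\|_{\max}$ with only polynomial loss in $\deg'(f)$, and that the highest weights appearing are linearly bounded by $\deg'(f)$. This is a purely combinatorial question in the representation theory of $\mathrm{SL}_3(\mathbb C)$, handled by systematically inverting the Schur-to-elementary-symmetric change of basis; any slack here is absorbed by the constant $A$ in the exponent of $p$.
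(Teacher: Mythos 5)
The paper does not actually prove Theorem \ref{Matz-Templier}; it is stated as a reformulation of a result of Matz--Templier (via the trace-formula method, compare \cite[Theorem 2.1.]{Lau-Ng-Wang-2020}). Your attempt to derive it instead from Buttcane--Zhou's Kuznetsov-based Lemma \ref{asymptotic} is a legitimate and genuinely different route, and the structural outline (Schur expansion, Shintani, Kato's formula, and the complex-conjugation argument $\overline{S_{\ell_1,\ell_2}}=S_{\ell_2,\ell_1}$ to reconcile the index convention) is correct. However, it has a fatal quantitative gap.

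The error term in Lemma \ref{asymptotic} is $O\bigl(T^{14/3+\varepsilon}p^{(\ell_1+\ell_2)/2+\varepsilon}\bigr)$, and the claimed bound in Theorem \ref{Matz-Templier} is $T^{9/2-2\eta}p^{A\deg'(f)}\|f\|_{\max}$. Since $14/3 = 4.6\overline{6} > 4.5 = 9/2$, the Buttcane--Zhou remainder is \emph{strictly larger} than the asserted bound in the $T$-aspect, and this cannot be remedied by any choice of $\eta$ or $\varepsilon$: one would need $14/3+\varepsilon \leq 9/2-2\eta$, i.e. $\varepsilon+2\eta \leq -1/6$, which is impossible. Your claim that "selecting $\eta$ small enough" absorbs the remainder into the claimed bound is therefore incorrect --- shrinking $\eta$ only moves $T^{9/2-2\eta}$ up toward $T^{9/2}$, which is still below $T^{14/3}$. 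What your argument proves is a weaker statement with error $T^{14/3+\varepsilon}p^{A\deg'(f)}\|f\|_{\max}$. That weaker version still yields a power saving over the main term $\sum_\phi h_T(\nu_\phi)\asymp T^{5-2\eta}$ (namely $T^{1/3-2\eta-\varepsilon}$ instead of $T^{1/2}$), and a quick check of the subsequent use in the proof of Theorem \ref{Sato-Tate} suggests it would suffice there after reoptimizing $n$ and $\delta$ --- but it does not establish the theorem as stated. To get the exponent $9/2-2\eta$ one really needs the Arthur--Selberg trace formula input of Matz--Templier (or the Lau--Ng--Wang reformulation), not the Kuznetsov-formula bound of Buttcane--Zhou.

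A secondary point worth tightening: you need the replacement of $\int_{\Re\nu=0}h_T(\nu)\,\mathrm{spec}(\nu)\,\mathrm d\nu$ by $\sum_\phi h_T(\nu_\phi)$ to cost at most $O(T^{9/2-2\eta})$. Remark \ref{number-of-forms} only asserts $\asymp$, which by itself allows a bounded but non-negligible multiplicative discrepancy; what you actually need (and what is true by the Weyl law for $\mathrm{SL}_3(\mathbb Z)$ with power-saving error, the difference being $O(T^{4+\varepsilon})$) should be invoked explicitly rather than inferred from the $\asymp$.
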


\noindent On the analytic side of the argument we will rely on the mean value theorem for Dirichlet polynomials \cite[Chapter 9]{Iwaniec-Kowalski}.
\begin{lemma}\label{MVT}
Let $N\geq 1$ and $F(s):=\sum_{n\sim N}a_nn^{-s}$ , where $a_n$ are any complex numbers. Then
\begin{align*}
\int\limits_{-T}^T\left|F\left(\frac 12+it\right)\right|^2\,\mathrm d t\ll (N+T)\sum_{n\sim N}\frac{|a_n|^2}{n}.
\end{align*}
\end{lemma}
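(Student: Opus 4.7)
The plan is to expand $|F(1/2+it)|^2$ as a double sum, integrate termwise, isolate the diagonal, and control the off-diagonal via Hilbert's inequality in the Montgomery--Vaughan form. Writing out the square gives
\begin{align*}
\int_{-T}^{T}\left|F\left(\tfrac12+it\right)\right|^2\mathrm d t=\sum_{m,n\sim N}\frac{a_m\overline{a_n}}{(mn)^{1/2}}\int_{-T}^{T}\left(\frac nm\right)^{it}\mathrm d t,
\end{align*}
and the diagonal $m=n$ contributes exactly $2T\sum_{n\sim N}|a_n|^2/n$, which already accounts for the $T$-part of the target bound.

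For the off-diagonal, the $t$-integral evaluates to $2\sin(T\log(n/m))/\log(n/m)$, which is bounded in absolute value by $2/|\log(n/m)|$. Setting $c_n:=a_n/n^{1/2}$ and $\lambda_n:=\log n$, the off-diagonal is thus at most
\begin{align*}
2\left|\sum_{\substack{m,n\sim N\\ m\neq n}}\frac{c_m\overline{c_n}}{\lambda_n-\lambda_m}\right|.
\end{align*}
Since the frequencies $\lambda_n$ for distinct integers $n\in[N,2N]$ are $\delta$-separated with $\delta\gg 1/N$, the Montgomery--Vaughan form of Hilbert's inequality bounds this expression by $\ll N\sum_{n\sim N}|c_n|^2=N\sum_{n\sim N}|a_n|^2/n$. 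Adding the diagonal and off-diagonal contributions yields $\int_{-T}^T|F(1/2+it)|^2\,\mathrm d t\ll(N+T)\sum_{n\sim N}|a_n|^2/n$, as claimed.

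The only external input is the Montgomery--Vaughan inequality; the rest is direct expansion. A cruder substitute would use $|\log(n/m)|\gg|n-m|/N$ together with Cauchy--Schwarz in the $|n-m|$ variable, but this loses a $\log N$ factor and hence falls short of the sharp form stated. Thus the sole delicate point is invoking the Hilbert-type inequality to avoid the logarithmic loss; all other steps are elementary.
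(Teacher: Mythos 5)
The paper does not prove this lemma; it simply cites \cite[Chapter 9]{Iwaniec-Kowalski}. Your proposed route (diagonal plus Montgomery--Vaughan Hilbert inequality for the off-diagonal) is indeed the standard proof given there, so the choice of approach is fine. However, there is a genuine gap in one step.

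You compute the $t$-integral as $2\sin(T\log(n/m))/\log(n/m)$, note that each term is bounded by $2/|\log(n/m)|$, and then conclude that the off-diagonal is at most
\begin{align*}
2\left|\sum_{\substack{m,n\sim N\\ m\neq n}}\frac{c_m\overline{c_n}}{\lambda_n-\lambda_m}\right|.
\end{align*}
This deduction does not follow. A term-by-term bound of $|2\sin(T(\lambda_n-\lambda_m))/(\lambda_n-\lambda_m)|$ by $2/|\lambda_n-\lambda_m|$ only yields
\begin{align*}
\sum_{\substack{m,n\sim N\\ m\neq n}}\frac{2|c_m||c_n|}{|\lambda_n-\lambda_m|},
\end{align*}
which is a sum of positive terms with no cancellation, and as you yourself observe in the last paragraph, this loses a $\log N$. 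It is \emph{not} bounded by the signed bilinear form $\left|\sum_{m\neq n}c_m\overline{c_n}/(\lambda_n-\lambda_m)\right|$ to which Hilbert's inequality applies; you have discarded the oscillation in the sine factor, which is precisely what makes the sharp bound possible.

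The correct way to reach the Hilbert form is to keep the oscillation and split it multiplicatively: write
\begin{align*}
\frac{2\sin(T(\lambda_n-\lambda_m))}{\lambda_n-\lambda_m}=\frac{e^{iT\lambda_n}e^{-iT\lambda_m}}{i(\lambda_n-\lambda_m)}-\frac{e^{-iT\lambda_n}e^{iT\lambda_m}}{i(\lambda_n-\lambda_m)},
\end{align*}
so that the off-diagonal becomes a difference of two bilinear forms $\sum_{m\neq n}b_m\overline{b_n}/(\lambda_n-\lambda_m)$ with $b_n=c_n e^{\mp iT\lambda_n}$ (hence $|b_n|=|c_n|$). Each of these is exactly of the shape required by the Montgomery--Vaughan inequality, and applying it to each gives $\ll N\sum_n|c_n|^2$ since the $\lambda_n=\log n$ are $\gg 1/N$-separated on $n\sim N$. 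With this repair, the rest of your argument is correct.
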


\noindent We will employ the following result of Matom\"aki and Radziwi{\l\l} \cite[Corollary 6]{Matomaki-Radziwill2} in several places.

\begin{lemma}\label{positprop}
Let $f:\mathbb N\longrightarrow \mathbb R$ be a multiplicative function. Then the sequence $\{f(n)\}$ has a positive
proportion of sign changes if and only if $f(n)<0$ for some integer $n>0$ and $f(n)\neq 0$ for a positive proportion of integers $n$.
\end{lemma}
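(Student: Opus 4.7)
The forward direction is immediate: if $\{f(n)\}_{n\le X}$ has $\gg X$ sign changes then each sign change is witnessed by a pair of non-zero values of opposite signs, so $f(n)\ne 0$ on a positive proportion of $n$ and $f$ must assume a negative value somewhere.

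For the converse, assume $f(m_0)<0$ for some $m_0$ and that $f(n)\ne 0$ on a set of density $\delta>0$. The plan is to first pass to the bounded real multiplicative function $g:=\mathrm{sgn}\circ f$, taking values in $\{-1,0,+1\}$, which has the same zero set, the same sign pattern, and hence the same sign changes as $f$; moreover $g(m_0)=-1$ and $|g|$ has mean value $\delta$. Next I would exploit multiplicativity to show that both $+1$ and $-1$ are attained by $g$ on sets of positive density: if $n$ is coprime to $m_0$ with $g(n)\ne 0$ then $g(nm_0)=-g(n)$, so the map $n\mapsto nm_0$ swaps positive and negative values of $g$ (on a positive density subset). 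Consequently $g$ cannot pretend to be the constant function $1$ in the sense of Hal\'asz, and Hal\'asz's theorem then yields $\sum_{n\le X}g(n)=o(X)$.

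The heart of the proof is now to invoke the Matom\"aki--Radziwi{\l\l} theorem on multiplicative functions in short intervals, applied separately to $g$ and to $|g|$: for $H=H(X)\to\infty$ arbitrarily slowly one has, for almost all $x\sim X$,
\begin{align*}
\frac1H\sum_{x\le n\le x+H}g(n)&=\frac1X\sum_{n\le X}g(n)+o(1)=o(1),\\
\frac1H\sum_{x\le n\le x+H}|g(n)|&=\delta+o(1).
\end{align*}
Since the short sum of $g$ is much smaller in absolute value than the short sum of $|g|$ on a positive proportion of intervals $[x,x+H]$, each such interval must contain values of $g$ of both signs, and therefore at least one sign change. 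This already produces $\gg X/H$ sign changes; a sharper application of Matom\"aki--Radziwi{\l\l} (their result is quantitative in $H$, allowing $H$ bounded as $X\to\infty$) upgrades this to the desired positive proportion.

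\textbf{Main obstacle.} The essential difficulty is the Matom\"aki--Radziwi{\l\l} short-interval theorem itself, whose proof combines Hal\'asz-type estimates, Ramar\'e's identity, and delicate combinatorial decompositions of multiplicative functions; here we use it as a black box. A secondary subtlety is verifying non-pretentiousness of $g$: one must confirm that the multiplicative flip $n\mapsto nm_0$ really forces $g$ to decorrelate from the constant $1$, which comes from the symmetric redistribution of positive and negative values that this flip induces on the positive density set $\{n:g(n)\ne 0,\,(n,m_0)=1\}$.
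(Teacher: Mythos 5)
The paper does not prove this lemma: it is quoted as \cite[Corollary 6]{Matomaki-Radziwill2}, so there is no internal argument to compare against. Your framework---pass to $g=\mathrm{sgn}\circ f$, apply the Matom\"aki--Radziwi{\l\l} short-interval theorem to $g$ and to $|g|$, and force a sign change in a positive proportion of intervals of bounded length---is the right one. But there is a genuine error where you claim that $g$ ``cannot pretend to be the constant function $1$'' and invoke Hal\'asz to conclude $\sum_{n\le X}g(n)=o(X)$. This is false: take $g(n)=(-1)^{v_2(n)}$ with $v_2$ the $2$-adic valuation. This $g$ is real-valued, multiplicative, nowhere zero, and $g(2)=-1<0$, so it satisfies both hypotheses of the lemma; yet $\mathbb D(g,1;X)^2=\sum_{p\le X}\frac{1-g(p)}p=\frac{1-(-1)}{2}=1$ is bounded, so $g$ \emph{does} pretend to be $1$, and indeed $\frac1X\sum_{n\le X}g(n)\to\frac13\ne 0$. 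More generally, the ``flip'' $n\mapsto nm_0$ only touches $g$ at the finitely many primes dividing $m_0$, which contributes $O(\sum_{p\mid m_0}1/p)$ to the pretentious distance; it cannot by itself produce non-pretentiousness.

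The good news is that $\sum_{n\le X}g(n)=o(X)$ is not what you need. For the short-interval comparison it suffices to have a fixed $c>0$ with $\left|\tfrac1X\sum_{n\le X}g(n)\right|\le(1-c)\cdot\tfrac1X\sum_{n\le X}|g(n)|$ for all large $X$, and this is equivalent to both $\{n:g(n)=+1\}$ and $\{n:g(n)=-1\}$ having positive density. Your flip argument proves precisely this, once one checks (via the Euler-product expression for the positive mean value of $|g|$) that the set $\{n: g(n)\ne 0,\ (n,m_0)=1\}$ has positive density, so that the bijection $n\mapsto nm_0$ transports a positive-density set of one sign into the other. So the repair is: drop the Hal\'asz detour entirely, keep the ``both signs have positive density'' conclusion, and run the short-interval comparison with $H$ a sufficiently large fixed constant (the Matom\"aki--Radziwi{\l\l} error term is $O((\log\log H/\log H)^{1/2})$, small for fixed large $H$, so the short averages of $g$ and $|g|$ inherit the gap $c$). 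For a positive proportion of $x\sim X$ the interval $[x,x+H]$ then contains both a $+1$ and a $-1$ of $g$; since each sign change lies in at most $H$ such intervals and $H$ is fixed, this yields $\gg X/H\gg X$ sign changes, as required.
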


\noindent Our last two theorems are based on Elliott-Montgomery-Vaughan type large sieve inequalities. We have the following result due to Xiao ja Xu \cite[Theorem 1]{Xiao-Xu}. Let $h_T$ be the same weight function as in (\ref{weight-function}). 

\begin{lemma}\label{large-sieve}
Suppose $e_1, e_2$ are two fixed non-negative integers, and $e_1+e_2\geq 1$.
Let $\{b_p\}_p$ be a sequence of complex numbers indexed by prime numbers such that $|b_p|\leq B$
for some constant $B$ and for all primes $p$. Then we have
\begin{align*}
&\sum_{\phi}h_T(\nu_{\phi})\left|\sum_{P<p\leq Q}\frac{b_p A_\phi(p^{e_1},p^{e_2})}p\right|^{2k}\\
&\ll_{\varepsilon,\nu_0} T^5\left(\frac{2^9(e_1+e_2)^8B^2k}{P\log P}\right)^k\left(1+\left(\frac{40k\log P}P\right)^{k/3}\right)+T^{14/3+\varepsilon}\left(\frac{8(e_1+e_2)^4BQ^{(e_1+e_2)/2+\varepsilon}}{\log P}\right)^{2k}
\end{align*}
uniformly for $B>0$, $k\geq 1$, and $2\leq P< Q\leq 2P$.  
\end{lemma}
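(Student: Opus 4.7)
The plan is to carry out an Elliott--Montgomery--Vaughan style $2k$-th moment computation in which Lemma \ref{asymptotic} (in its natural multiplicative extension to arbitrary positive integer arguments $N_1,N_2$, obtained via Hecke multiplicativity and the multiplicativity of the local integrals against $\mathrm d\mu_p$ across distinct primes) plays the role of the orthogonality input. Expanding the $2k$-th power as $(\cdot)^k\overline{(\cdot)^k}$ and using the identity $\overline{A_\phi(m_1,m_2)}=A_\phi(m_2,m_1)$, the quantity to be estimated becomes
\begin{align*}
\sum_{\substack{p_1,\ldots,p_k\\ q_1,\ldots,q_k}}\frac{b_{p_1}\cdots b_{p_k}\overline{b_{q_1}}\cdots\overline{b_{q_k}}}{p_1\cdots p_k q_1\cdots q_k}\sum_{\phi}h_T(\nu_\phi)\prod_{i=1}^{k}A_\phi(p_i^{e_1},p_i^{e_2})\prod_{j=1}^{k}A_\phi(q_j^{e_2},q_j^{e_1}).
\end{align*}

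At each prime $p$ appearing with multiplicities $\alpha_p$ and $\beta_p$ in the two $k$-tuples, (\ref{Shintani}) and the Littlewood--Richardson rule expand the local product $S_{e_2,e_1}(\alpha_\phi(p))^{\alpha_p}S_{e_1,e_2}(\alpha_\phi(p))^{\beta_p}$ as a non-negative integer combination of Schur polynomials, with dimension bounds giving at most $O((e_1+e_2+1)^{c(\alpha_p+\beta_p)})$ terms. Hecke multiplicativity across distinct primes then glues these local pieces into single eigenvalues $A_\phi(N_1,N_2)$, to which we apply the multi-prime extension of Lemma \ref{asymptotic}. By (\ref{Kato}) the resulting main part factorises as
\begin{align*}
\left(\int_{\Re\nu=0}h_T(\nu)\text{spec}(\nu)\,\mathrm d\nu\right)\prod_{p}\int_{T_0/W}S_{e_2,e_1}(\sigma)^{\alpha_p}S_{e_1,e_2}(\sigma)^{\beta_p}\,\mathrm d\mu_p(\sigma).
\end{align*}

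The bound $\int_{T_0/W}S_{\ell_1,\ell_2}\,\mathrm d\mu_p\ll_{e_1+e_2}p^{-(\ell_1+\ell_2)/2}$ for non-trivial $(\ell_1,\ell_2)\neq(0,0)$, which follows from (\ref{Kato}) together with the compactness of the support of $\mu_p$, shows that the dominant contribution comes from the balanced configurations with $\alpha_p=\beta_p$ for all $p$. These are counted by $k!$ pairings of the two $k$-tuples, and combining with Mertens's theorem, Remark \ref{number-of-forms}, and Stirling's formula yields the main contribution $T^5\bigl(2^9(e_1+e_2)^8 B^2 k/(P\log P)\bigr)^k$. The auxiliary factor $1+(40k\log P/P)^{k/3}$ captures near-balanced configurations with up to $\sim k/3$ mismatched prime slots, handled by the combinatorial device going back to Soundararajan's work on moments of $\zeta$. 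For the error term in Lemma \ref{asymptotic}, bound it pointwise by $T^{14/3+\varepsilon}Q^{k(e_1+e_2)/2+\varepsilon}$, multiply by $B^{2k}$ and by the $((e_1+e_2)!)^{O(k)}$ combinatorial factor (which gives the $8(e_1+e_2)^4$ per slot), and sum over the at most $(\pi(Q)-\pi(P))^{2k}\ll(P/\log P)^{2k}$ configurations; this produces the second term in the stated bound.

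The main obstacle is the combinatorial bookkeeping: one needs to track both the constants from iterated Littlewood--Richardson decompositions (to keep them polynomial in $e_1+e_2$) and the precise balance between the number of near-diagonal tuples and the $p$-adic decay rate of $\int_{T_0/W}S_\lambda\,\mathrm d\mu_p$ in $|\lambda|$, which is what produces the delicate $1+(40k\log P/P)^{k/3}$ refinement rather than a clean $T^5(cB^2 k/(P\log P))^k$ estimate.
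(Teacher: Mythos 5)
The paper does not prove this lemma: it is stated as a result of Xiao and Xu \cite[Theorem 1]{Xiao-Xu} and used as a black box, so there is no internal proof to compare your attempt against. Your outline is nonetheless the standard Elliott--Montgomery--Vaughan moment strategy, which is in fact the route Xiao and Xu take (and which Lau--Ng--Royer--Wang extend to $\mathrm{GL}_n$), so the overall shape of the argument is appropriate.

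Two places need repair. The bound $\int_{T_0/W}S_{\ell_1,\ell_2}\,\mathrm d\mu_p\ll p^{-(\ell_1+\ell_2)/2}$ for $(\ell_1,\ell_2)\neq(0,0)$ does \emph{not} follow from (\ref{Kato}) together with compactness of the support of $\mu_p$; compactness (or more precisely, the uniform boundedness of the density of $\mu_p$ relative to $\mu_\infty$) only gives boundedness, not $p$-decay. What you actually need, and what the whole diagonal/off-diagonal separation rests on, is that the Lusztig $q$-analogue $\mathfrak M^0_{\aleph(\ell_2,\ell_1)}(q)$ --- a polynomial in $q$ with nonnegative coefficients summing to the zero-weight multiplicity of $V_{\aleph(\ell_2,\ell_1)}$ --- has its lowest-degree term of degree at least $(\ell_1+\ell_2)/2$. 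This is a genuine combinatorial statement about Kostant $q$-partitions for the $A_2$ root system (and, incidentally, the integral vanishes outright unless $\ell_1\equiv\ell_2\pmod 3$, which is where the $k/3$ in the secondary factor comes from). It is true but requires an actual argument, e.g.\ a direct evaluation over the six Weyl group terms; a soft argument does not deliver it, and without it the claim that balanced configurations dominate is unsupported.

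Separately, the error-term accounting contains a factor-of-two slip. In the multi-prime version of Lemma \ref{asymptotic}, the error for $A_\phi(N_1,N_2)$ should be of size $T^{14/3+\varepsilon}(N_1N_2)^{1/2+\varepsilon}$, and in the worst case $N_1N_2\leq Q^{2k(e_1+e_2)}$, giving a pointwise error $\ll T^{14/3+\varepsilon}Q^{k(e_1+e_2)+\varepsilon}$, not $Q^{k(e_1+e_2)/2+\varepsilon}$ as you wrote. Since the stated target $\bigl(8(e_1+e_2)^4BQ^{(e_1+e_2)/2+\varepsilon}/\log P\bigr)^{2k}$ actually carries $Q^{k(e_1+e_2)+2k\varepsilon}$, the slip does not damage the end result, but your intermediate line is off. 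Also be aware that the ``natural multiplicative extension'' of Lemma \ref{asymptotic} to composite arguments is not stated in this paper; in the cited source it is obtained by applying the Kuznetsov/orthogonality machinery directly to $A_\phi(N_1,N_2)$, not by a formal multiplicative deduction from the prime-power case, and both the main term factorisation over primes and the shape of the error need to be justified from that.
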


\noindent Similar result also holds when restricting to self-dual forms. Let $\widetilde h_T$ is the weight function constructed by Guerreiro \cite{Guerreiro}.
 %To this end we define the following test function, which majorises the characteristic function of self-dual $\mathrm{GL}_3$ Maass cusp forms with spectral parameters bounded by $T$,
%\begin{align*}
%\widetilde h(\nu)=\left(T^{\alpha_1^2}+T^{\alpha_2^2}+T^{\alpha_3^2}\right)e^{2(\alpha_1^2+\alpha_2^2+\alpha_3^2)/T^2}\cdot\frac{\left(\prod_{j=1}^3\Gamma\left(\frac{2+3\nu_j}4\right)\Gamma\left(\frac{2-3\nu_j}4\right)\right)^2}{\prod_{j=1}^3\Gamma\left(\frac{1+3\nu_j}2\right)\Gamma\left(\frac{1-3\nu_j}2\right)}
%\end{align*}
%with $\alpha=2\nu_1-\nu_2$, $\alpha_2=\nu_1-\nu_2$ and $\alpha_3=-3\nu_1+2\nu_2$ if $\nu=(\nu_1,\nu_2)$. Then we have the following large sieve result.

\begin{lemma}\label{self-dual-large-sieve}
Suppose $e_1, e_2$ are two fixed non-negative integers, and $e_1+e_2\geq 1$.
Let $\{b_p\}_p$ be a sequence of complex numbers indexed by prime numbers such that $|b_p|\leq B$
for some constant $B$ and for all primes $p$. Then we have
\begin{align*}
&\sum_{\phi}\widetilde h_T(\nu_\phi)\left|\sum_{P<p\leq Q}\frac{b_p A_\phi(p^{e_1},p^{e_2})}p\right|^{2k}\\
&\ll_{\varepsilon,\nu_0} \frac{T^4}{\sqrt{\log T}}\left(\frac{2^9(e_1+e_2)^8B^2k}{P\log P}\right)^k\left(1+\left(\frac{40k\log P}P\right)^{k/3}\right)+T^{11/3+\varepsilon}\left(\frac{8(e_1+e_2)^4BQ^{(e_1+e_2)/2+\varepsilon}}{\log P}\right)^{2k}
\end{align*}
uniformly for $B>0$, $k\geq 1$, and $2\leq P< Q\leq 2P$. 
\end{lemma}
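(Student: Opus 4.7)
The plan is to follow the proof of Lemma \ref{large-sieve} by Xiao and Xu essentially line by line, replacing their asymptotic formula for $\sum_\phi h_T(\nu_\phi) A_\phi(p^{\ell_1}, p^{\ell_2})$ (i.e.\ Lemma \ref{asymptotic}) by the analogous trace formula for self-dual forms that is available through Guerreiro's weight function $\widetilde h_T$ \cite{Guerreiro}. The modifications $T^5 \rightsquigarrow T^4/\sqrt{\log T}$ and $T^{14/3+\varepsilon} \rightsquigarrow T^{11/3+\varepsilon}$ simply reflect the thinner density of self-dual forms within the full cuspidal spectrum and the corresponding change in both the main term and the error term of that trace formula.

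First, expand the $2k$-th power into a sum over $2k$-tuples $(p_1,\ldots,p_{2k}) \in (P,Q]^{2k}$. The self-dual symmetry $\overline{A_\phi(m,n)} = A_\phi(n,m) = A_\phi(m,n)$ (combining (\ref{dualmaasscoeff}) with $\phi=\widetilde\phi$ and the reality of the Hecke eigenvalues) removes the complex conjugates, after which the Hecke multiplicativity (\ref{Hecke-relation1}) expands the product $\prod_j A_\phi(p_j^{e_1}, p_j^{e_2})$ as a finite linear combination, with coefficients bounded by a constant depending only on $e_1, e_2$, of individual Fourier coefficients $A_\phi(m_1, m_2)$ with $m_1 m_2 \leq Q^{k(e_1+e_2)}$. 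Averaging this expansion with the weight $\widetilde h_T(\nu_\phi)$ and applying Guerreiro's self-dual counterpart of Lemma \ref{asymptotic} produces a main term proportional to an integral against the $p$-adic Plancherel measure (of size $\asymp T^4/\sqrt{\log T}$) plus an error of $O(T^{11/3+\varepsilon} (m_1 m_2)^{1/2+\varepsilon})$.

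For the main term, use (\ref{Kato}) to identify the resulting integrals of Schur polynomials against $d\mu_p$ with the Kazhdan--Lusztig values $\mathfrak M^0_{\aleph(\cdot,\cdot)}(p^{-1})$, which are $O(p^{-1})$ whenever the underlying exponents are not both zero; this is precisely the saving that Xiao and Xu exploit. The combinatorial accounting of which pairings of the $2k$ primes yield a nonvanishing contribution, combined with the prime counting bound $\pi(Q) - \pi(P) \ll P/\log P$, then reproduces their factor $(2^9(e_1+e_2)^8 B^2 k / (P \log P))^k (1 + (40 k \log P / P)^{k/3})$ verbatim. The error contribution is estimated trivially: using $(m_1 m_2)^{1/2+\varepsilon} \leq Q^{k(e_1+e_2)/2 + \varepsilon}$ together with the trivial count of at most $(Q/\log P)^{2k}$ tuples, one obtains the second term in the stated bound.

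The main obstacle is verifying that Guerreiro's construction indeed delivers the asymptotic trace formula with the prescribed main-term normalization $T^4/\sqrt{\log T}$ and error $T^{11/3+\varepsilon}$; once these inputs are in hand, the remainder of the argument is a clean transcription of \cite{Xiao-Xu} with the substitutions $h_T \to \widetilde h_T$, $T^5 \to T^4/\sqrt{\log T}$, and $T^{14/3+\varepsilon} \to T^{11/3+\varepsilon}$ propagated through every estimate.
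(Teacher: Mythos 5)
Your proposal matches the paper's intent: the paper proves this lemma in one sentence, by taking Xiao--Xu's argument for Lemma \ref{large-sieve} and substituting the thinner self-dual counting provided by Guerreiro's $\widetilde h_T$, and your plan to replicate the Xiao--Xu argument with self-dual inputs is the same route. The one point worth sharpening: the specific input the paper identifies is not an asymptotic of the shape of Lemma \ref{asymptotic} for $\widetilde h_T$, but rather the single harmonic-weight bound $\sum_{\phi}|\widetilde h_T(\nu_\phi)|/L(1,\mathrm{Ad}^2\phi)\ll T^4/\sqrt{\log T}$, and the factor $1/L(1,\mathrm{Ad}^2\phi)$ (arising from the Kuznetsov normalization that underlies Xiao--Xu's proof) is what must be propagated through the estimates to produce $T^4/\sqrt{\log T}$ in place of $T^5$ and $T^{11/3+\varepsilon}$ in place of $T^{14/3+\varepsilon}$; your write-up leaves this normalization implicit, whereas it is precisely the quantity the paper points to.
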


\begin{remark}
We have that $\widetilde h_T(\nu_\phi)\asymp 1$ when $\phi$ is self-dual and $\|\nu_\phi\|\leq T$. Note that Guerreiro's weight function depends on a scaling parameter $R$, which is needed for technical reasons, but one gets rid of this by renormalisation. Also, the weight function $\widetilde h_T$ depends on some chosen $\nu_0\in i\mathfrak a_{\mathbb C}^*$ similarly as the weight function $h_T$.
\end{remark}

\noindent This follows immediately inserting the estimate 
\begin{align*}
\sum_{\phi}\frac{\left|\widetilde h_T(\nu_\phi)\right|}{L(1,\text{Ad}^2\phi)}\ll\frac{T^4}{\sqrt{\log T}}
\end{align*}
established by Guerreiro \cite{Guerreiro} into the argument of Xiao and Xu. Here $L(s,\text{Ad}^2\phi)$ is the adjoint square $L$-function attached to $\phi$. 

\section{Proof of Theorem \ref{merkkivaihtelu}}

\noindent Let $X^{2\delta}\leq H\ll X$ for some arbitrarily small fixed $\delta>0$. We set $M=X^\delta$ so that $M<H\ll X$. Let us consider the sum
\begin{align*}
\sum_{\substack{x\leq mk\leq x+H\\
m\sim M\\
(m,k)=1}}A(mk,1).
\end{align*}

\noindent Theorem \ref{merkkivaihtelu} will follow from the following two lemmas. Recall that $\theta\geq 0$ is such that $L(1/2+it,\phi)\ll_\varepsilon (3+|t|)^{\theta+\varepsilon}$ for any $\varepsilon>0$. 

\begin{lemma}\label{lemma 1}
Assume that $\theta> 1/2$ and $H\geq X^{1-1/2\theta+10\delta}$. Then we have 
\begin{align}\label{main-sum}
\int\limits_X^{2X}\left|\sum_{\substack{x\leq mk\leq x+H\\
m\sim M\\
(m,k)=1}}A(mk,1)\right|^2\mathrm d x\ll H^2X^{1-\delta^2}.
\end{align}
The same estimate holds for any $H\geq X^{2\delta}$ when $0\leq \theta\leq 1/2$. 
\end{lemma}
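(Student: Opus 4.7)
The plan is to adapt the complex-analytic approach of Matom\"aki--Radziwi{\l\l}~\cite{Matomaki-Radziwill2} to the $\mathrm{GL}_3$ setting. Smooth the sharp cut-off $\mathbf{1}_{[x,x+H]}$ by a non-negative bump $\eta$ of length $H$, and observe that the multiplicativity $A(mk,1)=A(m,1)A(k,1)$ (valid since $(m,k)=1$) lets us rewrite the inner sum as $\sum_n a_n\,\eta((n-x)/H)$ with $a_n=A(n,1)\,r_M(n)$, where $r_M(n)$ records the admissible factorisations $n=mk$. The associated Dirichlet series then factorises as
\[
F(s) := \sum_n \frac{a_n}{n^s} \;=\; L(s,\phi)\cdot G(s),\qquad G(s) := \sum_{m\sim M}\frac{A(m,1)}{m^s}\prod_{p\mid m} L_p(s,\phi)^{-1}.
\]
Since each local inverse $L_p(s,\phi)^{-1}$ is a polynomial in $p^{-s}$ of degree three, $G$ is a Dirichlet polynomial of length $\ll M^4$, and a short Rankin--Selberg-type computation gives $\sum_N |g_N|^2/N \ll (\log X)^{O(1)}$.

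Mellin inversion yields $S_1(x;H)=\frac{1}{2\pi i}\int_{(c)} F(s)\,\check\eta_{x,H}(s)\,ds$, where the Mellin transform of the bump satisfies $\check\eta_{x,H}(\tfrac12+it)\ll Hx^{-1/2}$ with rapid decay once $|t|\gg X^{1+\varepsilon}/H$. Since $F$ is entire, I would shift the contour to the critical line $\sigma=1/2$. Squaring, integrating over $x\in[X,2X]$, and exploiting the oscillation $x^{i(t_1-t_2)}$ in the resulting double $t$-integral (which localises it to $|t_1-t_2|\ll 1$ up to an acceptable logarithmic error), Cauchy--Schwarz reduces the claim to the mean-square estimate
\[
\int_{|t|\le X^{1+\varepsilon}/H} \bigl|L(\tfrac12+it,\phi)\,G(\tfrac12+it)\bigr|^2\,dt \;\ll\; X^{1-\delta^2-\varepsilon}.
\]

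For $\theta>1/2$, I would bound this integral by a dyadic decomposition in $|t|$, on each piece combining the subconvex bound $|L(\tfrac12+it,\phi)|\ll(1+|t|)^{\theta+\varepsilon}$ with Lemma~\ref{MVT} applied to $G$ (which yields $\int_{|t|\le T_0}|G(\tfrac12+it)|^2\,dt\ll (M^4+T_0)(\log X)^{O(1)}$). Writing $T=X^{1+\varepsilon}/H$, the hypothesis $H\ge X^{1-1/(2\theta)+10\delta}$ is equivalent to $T^{2\theta}\le X^{1-O(\delta)}$, which is precisely the threshold at which the subconvex exponent on $L$ absorbs the contribution from the large-$|t|$ region, while the mean-value theorem controls the small-$|t|$ part where $G$ dominates. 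In the complementary case $0\le\theta\le 1/2$ the subconvex bound is already so strong that essentially the same argument works for any $H\ge X^{2\delta}$, yielding the second half of the lemma.

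The main difficulty is this balancing: a direct application of Lemma~\ref{MVT} to $F$ viewed as a single Dirichlet series of length $\asymp X$ gives only $\int|F|^2\,dt\ll X^{1+\varepsilon}$, and the required polynomial saving $X^{-\delta^2}$ can arise only from exploiting the bilinear $(m,k)$-structure that produced the factorisation $F=L\cdot G$, letting us treat $L$ as a single analytic object to which subconvexity applies rather than as an opaque array of coefficients. Without this bilinear structure the argument recovers only the convexity-type saving, which underlies the weaker $\gg X^{2/3-\varepsilon}$ benchmark noted in the introduction.
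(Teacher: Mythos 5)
Your factorisation $F(s)=L(s,\phi)\cdot G(s)$ with $G$ of length $\ll M^4$, and the Mellin/contour-shift/Saffari--Vaughan reduction to a mean-square estimate
\[
\int_{|t|\le T_0}\bigl|L(\tfrac12+it,\phi)\,G(\tfrac12+it)\bigr|^2\,dt\ll X^{1-\delta^2},\qquad T_0\asymp X/H,
\]
are both in the same spirit as the paper (which keeps a truncated factor $K(s)$ of length $\asymp X/M$ in place of $L$, plus a bounded coprimality polynomial $D$, but this is cosmetic). The gap is in the final step. ``Subconvexity on $L$ times MVT on $G$'' does not close: with $H=X^{1-1/(2\theta)+10\delta}$ one has $T_0\asymp X^{1/(2\theta)-10\delta}$, so $M^4=X^{4\delta}\ll T_0$, and Lemma~\ref{MVT} gives $\int_{|t|\le T_0}|G|^2\,dt\ll T_0(\log X)^{O(1)}$; multiplying by $\sup|L|^2\ll T_0^{2\theta+\varepsilon}$ yields $T_0^{2\theta+1+\varepsilon}=X^{1+1/(2\theta)-O(\delta)+\varepsilon}$. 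This exceeds not only the target $X^{1-\delta^2}$ but also the ``convexity benchmark'' $X^{1+\varepsilon}$ you mention; the bilinear structure alone buys nothing here. The reason is that you applied MVT to the \emph{short} factor: on an interval of length $T_0\gg M^4$, MVT for a polynomial of length $M^4$ returns essentially $T_0$, which is no better than trivial.

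The missing idea is the Matom\"aki--Radziwi{\l\l}-style dichotomy on the $t$-range, which is exactly how the paper produces the power saving. One splits $[0,T_0]=\mathcal T\cup\mathcal U$ according to whether the short polynomial $M(s)=\sum_{m\sim M}A(m,1)m^{-s}$ satisfies $|M(\tfrac12+it)|\le M^{1/2-\delta}$. On $\mathcal T$ one bounds $M$ (and $D$) pointwise and applies MVT to the \emph{long} factor $K$ of length $\asymp X/M$, giving $\ll M^{1-2\delta}\cdot(X/M)(\log X)^{O(1)}\ll X^{1-2\delta^2}(\log X)^{O(1)}$; this is where the polynomial gain $X^{-\delta^2}$ actually comes from, and it requires no subconvexity. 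On $\mathcal U$ one first proves $|\mathcal U|\ll X^{4\delta}$ by a moment bound (raising $M$ to the $r$-th power and applying MVT to $M^r$), and only then invokes the subconvex bound $|K(\tfrac12+it)|\ll(1+|t|)^{\theta+\varepsilon}$ pointwise; the hypothesis $H\ge X^{1-1/(2\theta)+10\delta}$ enters solely to make this rare-set contribution $\ll X^{7\delta}T_0^{2\theta+\varepsilon}\ll X^{1-\Omega(\delta)}$. Your ``threshold'' heuristic correctly identifies where subconvexity becomes effective, but without the $\mathcal T/\mathcal U$ split the main contribution to the mean square is uncontrolled and the lemma is not established.
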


\begin{lemma}\label{lemma 2}
Let $\varepsilon>0$ be small but fixed. Then we have 
\begin{align*}
\sum_{\substack{x\leq mk\leq x+H\\
m\sim M\\
(m,k)=1}}|A(mk,1)|>\frac H{X^{\varepsilon}}
\end{align*}
for $\gg_\varepsilon X^{1-3\varepsilon/2}$ of $x\sim X$. 
\end{lemma}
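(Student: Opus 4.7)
The plan is to adapt the commented-out proof of the linear analogue of Lemma \ref{lemma 2} to the present bilinear setting, with Hecke multiplicativity providing the key structural input. The argument combines an $L^1$-averaging step with Jiang--L\"u's lower bound and a Rankin--Selberg truncation.

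First I would establish the mass bound
\[
\Sigma := \sum_{\substack{X \leq mk \leq 2X \\ m \sim M,\ (m,k)=1}} |A(mk,1)| \gg_\varepsilon X^{1-\varepsilon/2}.
\]
By the Hecke multiplicativity relation $A(mk,1) = A(m,1)A(k,1)$ for coprime $m,k$ (recorded just after \eqref{Mobius-relation}), this sum factors as
\[
\Sigma = \sum_{m \sim M}|A(m,1)|\sum_{\substack{X/m \leq k \leq 2X/m \\ (k,m)=1}}|A(k,1)|.
\]
A dyadic, coprime-restricted variant of Lemma \ref{Jiang-Lu}---provable via the same Rankin--Selberg / H\"older-type argument as in \cite{JiangLu}, coupled with M\"obius inversion in the condition $(k,m)=1$---bounds each factor from below, and yields the claim after a second application to the outer sum over $m \sim M$.

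Next I would split $\Sigma = \Sigma_{\mathrm{small}} + \Sigma_{\mathrm{large}}$ according to whether $|A(mk,1)| \leq X^\varepsilon$ or not. Using the pointwise bound $d(n) \ll_\eta n^\eta$ together with the Rankin--Selberg estimate $\sum_{n \leq X}|A(n,1)|^2 \ll X\log X$ gives
\[
\Sigma_{\mathrm{large}} \leq X^{-\varepsilon}\sum_{X \leq n \leq 2X}|A(n,1)|^2\,d(n) \ll_\varepsilon X^{1 - 3\varepsilon/4},
\]
so $\Sigma_{\mathrm{small}} \gg_\varepsilon X^{1-\varepsilon/2}$. Writing $S_{\mathrm{small}}(x)$ for the inner sum in the statement of Lemma \ref{lemma 2} restricted to $|A(mk,1)| \leq X^\varepsilon$ and swapping summation, one obtains $\int_X^{2X} S_{\mathrm{small}}(x)\,dx \gg HX^{1-\varepsilon/2}$ up to negligible boundary contributions. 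Defining the bad set $\mathcal{B} := \{x \sim X : S_{\mathrm{small}}(x) \leq H/X^\varepsilon\}$, its contribution to this integral is at most $|\mathcal{B}|\cdot H/X^\varepsilon \leq HX^{1-\varepsilon}$, so $\mathcal{B}^c$ must carry $\gg HX^{1-\varepsilon/2}$. The pointwise bound $S_{\mathrm{small}}(x) \leq X^\varepsilon \sum_{x \leq n \leq x+H}d(n) \ll HX^{\varepsilon + o(1)}$ then forces $|\mathcal{B}^c| \gg_\varepsilon X^{1-3\varepsilon/2}$ after absorbing the $o(1)$ loss into the flexibility in $\varepsilon$. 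Since $S(x) \geq S_{\mathrm{small}}(x) > H/X^\varepsilon$ for every $x \in \mathcal{B}^c$, the lemma follows.

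The principal technical obstacle lies in Step 1: Lemma \ref{Jiang-Lu} as stated concerns a non-dyadic, unrestricted sum, so one must verify that the underlying Hoffstein--Lockhart / Rankin--Selberg argument extends to dyadic intervals under the coprimality condition $(k,m)=1$ with $m \sim X^\delta$. This is essentially bookkeeping, and once secured, the remaining Rankin--Selberg truncation, averaging, and pigeonhole manipulations are entirely routine, paralleling the argument for the linear analogue indicated in the commented-out portion.
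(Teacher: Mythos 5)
Your proposal follows the same overall architecture as the paper (factor the bilinear sum via Hecke multiplicativity, establish a lower bound for the long sum, truncate the large coefficients via Rankin--Selberg, then average over $x$ and apply the pigeonhole), and the averaging/pigeonhole stages are correct. However, the step you flag as ``essentially bookkeeping''---the coprime-restricted lower bound for the long bilinear sum---is in fact the crux, and your proposed route through it does not obviously work.

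Two issues. First, a dyadic bound $\sum_{k\sim Y}|A(k,1)| \gg Y^{1-\varepsilon}$ does not follow from the stated Lemma~\ref{Jiang-Lu} by subtraction, since one cannot subtract two lower bounds. Second, and more seriously, detecting $(k,m)=1$ by M\"obius inversion $1_{(k,m)=1}=\sum_{d\mid(m,k)}\mu(d)$ produces terms $\sum_{k'\sim X/(dm)}|A(dk',1)|$ with $d>1$, and since $|A(\cdot,1)|$ is \emph{not} multiplicative (only $A(\cdot,1)$ is, and only on coprime arguments) these do not simplify; obtaining the needed cancellation or smallness for these $d>1$ terms is not routine. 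The paper avoids both problems at once: it extracts from the proof in \cite{JiangLu} the stronger, already-dyadic, prime-power-restricted bound
\begin{align*}
\sum_{\ell\ll\log X}\;\sum_{p^\ell\sim X}|A(p^\ell,1)|\gg X^{1-\varepsilon/2}
\end{align*}
(noting in a footnote that this requires patching a gap in \cite{JiangLu} via the zero-free region for self-dual forms), and then, for the inner sum over $k$, simply restricts to prime powers $p^\ell$ with $p\nmid m$. The coprimality is then trivial, and the discarded terms with $p\mid m$ contribute at most $(X/M)^{1/2+\varepsilon}$ by pointwise bounds on Hecke eigenvalues, since $m\sim M=X^\delta$ has $O(\log M)$ prime divisors. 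You would need to adopt this prime-power restriction (or supply a genuinely new argument for the $d>1$ M\"obius terms) to complete the proof. The remaining steps you outline---the $d(n)$-weighted Rankin--Selberg truncation giving $\Sigma_{\mathrm{large}}\ll X^{1-3\varepsilon/4}$, the $L^1$-averaging, the pointwise bound $S_{\mathrm{small}}(x)\ll HX^{\varepsilon+o(1)}$, and the conclusion $|\mathcal B^c|\gg X^{1-3\varepsilon/2}$---match the paper and are sound.
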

\noindent With these lemmas the proof of Theorem \ref{merkkivaihtelu} proceeds as follows. From Lemma \ref{lemma 1} we infer by Chebyshev's inequality that there exists a positive constant $C$ depending on $\delta$ so that
\begin{align*}
\left|\sum_{\substack{x\leq mk\leq x+H\\
m\sim M\\
(m,k)=1}}A(mk,1)\right|\leq CKHx^{-\delta^2/2}
\end{align*}
for at least a proportion $1-1/K^2$ of $x\sim X$ for appropriate $H$. Choosing, say, $\varepsilon=\delta^2/4$ in Lemma \ref{lemma 2} we see that 
\begin{align*}
\left|\sum_{\substack{x\leq mk\leq x+H\\
m\sim M\\
(m,k)=1}}A(mk,1)\right|<\sum_{\substack{x\leq mk\leq x+H\\
m\sim M\\
(m,k)=1}}|A(mk,1)|
\end{align*}
for $\gg X^{1-3\delta^2/8}$ of $x\sim X$. Hence, for $H\geq X^{1-1/2\theta+10\delta}$ (if $\theta> 1/2)$ or $H\geq X^{2\delta}$ (if $0\leq\theta\leq 1/2)$, the sequence $\{A(m,1)\}_m$ has a sign change in the interval $[x,x+H]$ for $\gg X^{1-3\delta^2/8}$ of $x\sim X$. From this we deduce that the number of sign changes up to $X$ is $\gg X^{1/2\theta-3\delta^2/8-10\delta}$ (if $\theta> 1/2)$ or $\gg X^{1-3\delta^2/8-10\delta}$ (if $0\leq\theta\leq 1/2)$, giving the claimed result. \qed

\subsection{Proof of Lemma \ref{lemma 1}} By Perron's formula
\begin{align*}
\sum_{\substack{x\leq mk\leq x+H\\
m\sim M\\
(m,k)=1}}A(mk,1)=\frac1{2\pi i}\int\limits_{(1+\Xi)}\sum_{\substack{m\sim M\\
X/3M\leq k\leq 3X/M\\
(m,k)=1}}\frac{A(mk,1)}{(mk)^s}\cdot\frac{(x+H)^s-x^s}s\,\mathrm d s
\end{align*}
for any $\Xi>0$. Using an idea of Saffari and Vaughan \cite{S-V} we write
\begin{align*}
\frac{(x+H)^s-x^s}s=\frac x{2H}\int\limits_{H/x}^{3H/x}x^s\frac{(1+u)^s-1}s\,\mathrm d u-\frac{x+H}{2H}\int\limits_0^{2H/(x+H)}(x+H)^s\frac{(1+u)^s-1}s\,\mathrm d u. 
\end{align*}
It is enough to consider the contribution of the first term on the right-hand side as the other term can be treated similarly. Hence, 
\begin{align*}
\int\limits_X^{2X}\left|\sum_{\substack{x\leq mk\leq x+H\\
m\sim M\\
(m,k)=1}}A(mk,1)\right|^2\mathrm d x&\ll \frac{X^2}{H^2}\int\limits_X^{2X}\left|\int\limits_{H/x}^{3H/x}\int\limits_{(1+\Xi)}\sum_{\substack{m\sim M\\
X/3M\leq k\leq 3X/M\\
(m,k)=1}}\frac{A(mk,1)}{(mk)^s}x^s\frac{(1+u)^s-1}s\,\mathrm d s\mathrm d u\right|^2\,\mathrm d x\\
&\ll\frac XH\int\limits_{H/2X}^{3H/X}\int\limits_X^{2X}\left|\int\limits_{(1+\Xi)}\sum_{\substack{m\sim M\\
X/3M\leq k\leq 3X/M\\
(m,k)=1}}\frac{A(mk,1)}{(mk)^s}x^s\frac{(1+u)^s-1}s\,\mathrm d s\right|^2\,\mathrm d x\mathrm d u\\
&\ll\int\limits_X^{2X}\left|\int\limits_{(1+\Xi)}\sum_{\substack{m\sim M\\
X/3M\leq k\leq 3X/M\\
(m,k)=1}}\frac{A(mk,1)}{(mk)^s}x^s\frac{(1+u)^s-1}s\,\mathrm d s\right|^2\,\mathrm d x
\end{align*}
for some $u\ll H/X$. 

We detect the condition $(k,m)=1$ by M\"obius inversion, which enables us to write the integral above as 
\begin{align*}
\int\limits_X^{2X}\left|\int\limits_{(1+\Xi)}\sum_{d\leq 2M}\mu(d)\sum_{\substack{m\sim M\\
m\equiv 0\,(d)}}\frac{A(m,1)}{m^s}\sum_{\substack{X/3M\leq k\leq 3X/M\\
k\equiv 0\,(d)}}\frac{A(k,1)}{k^s}x^s\frac{(1+u)^s-1}s\mathrm d s\right|^2\mathrm d x
\end{align*}
using the multiplicativity of Hecke eigenvalues. Due to the presence of the M\"obius function we can restrict to squarefree $d$. In this case comparing Euler products gives
\begin{align*}
\sum_{\substack{m\sim M\\
m\equiv 0\,(d)}}\frac{A(m,1)}{m^s}=\frac1{d^s}\prod_{p|d}\frac{\sum_{j=0}^\infty\frac{A(p^{j+1},1)}{p^{js}}}{\sum_{j=0}^\infty\frac{A(p^j,1)}{p^{js}}}\sum_{m\sim M}\frac{A(m,1)}{m^s}
\end{align*}
and similar factorisation holds also for the other Dirichlet polynomial. 

It is well-known \cite[p. 174]{Goldfeld} that
\begin{align*}
\sum_{j=0}^\infty\frac{A(p^j,1)}{p^{js}}=\left(1-\frac{A(p,1)}{p^{s}}+\frac{A(p,1)}{p^{2s}}-\frac1{p^{3s}}\right)^{-1}
\end{align*}
for $\sigma>1$. Hence, in the same region we have
\begin{align*}
\prod_{p|d}\frac{\sum_{j=0}^\infty\frac{A(p^{j+1},1)}{p^{js}}}{\sum_{j=0}^\infty\frac{A(p^j,1)}{p^{js}}}&=\prod_{p|d}\left(A(p,1)-\frac{A(p,1)}{p^s}+\frac1{p^{2s}}\right).
%&\ll\prod_{p|d}(|A(p,1)|+1)\\
%&\ll d^{\varepsilon}
\end{align*}
%where the last estimate follows from the Ramanujan-Petersson conjecture. The $K$-polynomial is treated analogously. This shows that the $d$-sum converges absolutely
%\begin{align*}
%\left|\sum_{d\leq 2M}\frac{\mu(d)}{d^{2s}}\left(\prod_{p|d}\frac{\sum_{j=0}^\infty\frac{A(p^{j+1},1)}{p^{js}}}{\sum_{j=0}^\infty\frac{A(p^j,1)}{p^{js}}}\right)^2\right|\ll\sum_{d\leq 2M}\frac{d^{2\varepsilon}}{d^{2+2\xi}}\ll M^{2\varepsilon}
%\end{align*}
%on the line $\sigma=1+\xi$.
From now on we set
\begin{align*}
M(s):=\sum_{m\sim M}\frac{A(m,1)}{m^s},\quad\quad K(s):=\sum_{X/3M\leq k\leq 3X/M}\frac{A(k,1)}{k^s},
\end{align*}
and
\begin{align*}
D(s):=\sum_{d\leq 2M}\mu(d)\prod_{p|d}\left(\frac{A(p,1)}{p^s}-\frac{A(p,1)}{p^{2s}}+\frac1{p^{3s}}\right)^2.
\end{align*}
We note that 
\begin{align}\label{d-estimate}
D\left(\sigma+it\right)\ll \sum_{d\leq 2M}\frac{|A(d,1)|^2}{d^{2\sigma}}\ll M^{1-2\sigma}\log M
\end{align}
for $1/2\leq\sigma\leq 1+\Xi$.

At this point our integral is given by
\begin{align*}
\int\limits_X^{2X}\left|\int\limits_{(1+\Xi)}M(s)K(s)D(s)x^s\frac{(1+u)^s-1}s\mathrm d s\right|^2\mathrm d x.
\end{align*}
Let us first shift the line of integration in the $s$-integral from the line $\sigma=1+\Xi$ to the line $\sigma=1/2$. Let $U\gg X/H$ be a large enough parameter and $g$ be smooth weight function supported in the interval $[1/2,4]$, which is identically one in the interval $[1,2]$. Then the integral along the half-line connecting $1+\Xi\pm iU$ to $1+\Xi\pm i\infty$ contributes 
\begin{align*}
&\ll\int\limits_{\mathbb R}g\left(\frac xX\right)\left|\int\limits_{1+\Xi\pm iU}^{1+\Xi\pm i\infty}M(s)K(s)D(s)x^s\frac{(1+u)^s-1}s\mathrm d s\right|^2\mathrm d x\\
&\ll\int\limits_{1+\Xi\pm iU}^{1+\Xi\pm i\infty}\int\limits_{1+\Xi\pm iU}^{1+\Xi\pm i\infty}\left|M(s_1)M(s_2)K(s_1)K(s_2)D(s_1)D(s_2)\frac{(1+u)^{s-1}-1}{s_1}\frac{(1+u)^{s_2}-1}{s_2}\right|\\
&\qquad\qquad\cdot\left|\int\limits_{\mathbb R}g\left(\frac xX\right)x^{s_1+\overline{s_2}}\mathrm d x\right|\mathrm d s_1 \mathrm d s_2\\
&\ll X^{3+2\Xi}\int\limits_{1+\Xi\pm iU}^{1+\Xi\pm i\infty}\frac{|M(s)K(s)D(s)|^2}{|t|^2}\mathrm d s\\
&\ll \frac{X^{3+2\Xi}}{U},
\end{align*}
where we have used the estimate 
\begin{align*}
\frac{(1+u)^s-1}s\ll\min\left\{\frac HX,\frac1{|t|}\right\}
\end{align*} in the penultimate step (recall that $U\gg X/H$) and the fact that $M(s)K(s)D(s)$ is bounded on the line $\sigma=1+\Xi$ in the last step. 

On the horizontal lines $[1/2\pm iU,1+\Xi\pm iU]$ the integral is similarly bounded by
\begin{align*}
&\ll X\int\limits_{1/2}^{1+\Xi}\left|M(\sigma\pm iU)K(\sigma\pm iU)D(\sigma\pm iU)\frac{(1+u)^{\sigma\pm iU}-1}{\sigma\pm iU}\right|^2\,\mathrm d \sigma\\
&\ll\frac X{U^{1/2}}
\end{align*}
using the convexity bound $M(s)K(s)\ll(1+|t|)^{3(1-\sigma)/2}$ in the strip $1/2\leq\sigma\leq 1+\Xi$, which follows from the functional equation of $L(s,\phi)$ and the Phragm\'en-Lindel\"of principle. Choosing $U$ to be sufficiently large in terms of $X$ and $H$ we see that the contributions from the shift can be made arbitrarily small.  

%Hence we get, up to a negligible error term, that the contribution of the first integral on the right-hand side of (\ref{identity}) is 
%\begin{align*}
%&\ll\int\limits_X^{2X}\left|\frac1{2\pi i}\int\limits_{(1/2)}L(s,f)\cdot\frac x{2H}\int\limits_{H/x}^{3H/x}x^s\frac{(1+u)^s-1}s\mathrm d u\,\mathrm d s\right|^2\mathrm d x\\
%&\ll\frac{X^2}{H^2}\int\limits_X^{2X}\left|\int\limits_{(1/2)}L(s,f)\int\limits_{H/x}^{3H/x}x^s\frac{(1+u)^s-1}s\mathrm d s\right|^2\mathrm d x\,\mathrm d u\\
%&\ll\frac{X}{H}\int\limits_{H/X}^{3H/X}\int\limits_X^{2X}\left|\int\limits_{(1/2)}L(s,f)x^s\frac{(1+u)^s-1}s\mathrm d s\right|^2\mathrm d x\,\mathrm d u\\
%&\ll\int\limits_X^{2X}\left|\int\limits_{(1/2)}L(s,f)x^s\frac{(1+u)^s-1}s\mathrm d s\right|^2\mathrm d x
%\end{align*}  
%for some $u\ll H/X$.

Now rest of the integral is
\begin{align*}
&\ll\int\limits_\mathbb R g\left(\frac xX\right)\left|\int\limits_{1/2}^{1/2+iU}M(s)K(s)D(s)x^s\frac{(1+u)^s-1}s\mathrm d s\right|^2\mathrm d x\\
&\ll\int\limits_{1/2}^{1/2+iU}\int\limits_{1/2}^{1/2+iU}\left|M(s_1)M(s_2)K(s_1)K(s_2)D(s_1)D(s_2)\frac{(1+u)^{s_1}-1}{s_1}\frac{(1+u)^{s_2}-1}{s_2}\right|\\
&\qquad\qquad\quad\cdot\left|\int\limits_\mathbb R g\left(\frac xX\right)x^{s_1+\overline{s_2}}\mathrm d x\right|\mathrm d s_1\mathrm d s_2\\
&\ll X^2\int\limits_{1/2}^{1/2+iU}|M(s)K(s)D(s)|^2\min\left\{\frac{H^2}{X^2},\frac1{|t|^2}\right\}\,\mathrm d s\\
&\ll H^2\int\limits_{1/2}^{1/2+iX/H}\left|M(s)K(s)D(s)\right|^2\,\mathrm d s+X^2\int\limits_{1/2+iX/H}^{1/2+iU}\frac{\left|M(s)K(s)D(s)\right|^2}{|t|^2}\,\mathrm d s.
\end{align*}
Hence, the goal is to estimate the integral
\begin{align*}
\int\limits_0^{X/H}\left|M\left(\frac12+it\right)K\left(\frac12+it\right)D\left(\frac12+it\right)\right|^2\,\mathrm d t.
\end{align*}
We set
\begin{align*}
\mathcal T:=\left\{t\in[0,X/H]:\,\left|M\left(\frac12+it\right)\right|\leq M^{1/2-\delta}\right\}
\end{align*}
and $\mathcal U:=[0,X/H]\backslash\mathcal T$. Using the definition of $\mathcal T$ and Lemma \ref{MVT} for the $K$-polynomial we get
\begin{align*}
&\int\limits_{\mathcal T}\left|M\left(\frac12+it\right)K\left(\frac12+it\right)D\left(\frac12+it\right)\right|^2\,\mathrm d t\\
&\ll M^{1-2\delta}\left(\frac XH+\frac XM\right)\sum_{m\sim X/M}\frac{|A(m,1)|^2}m(\log M)^2\ll XM^{-2\delta}(\log X)^3\ll X^{1-\delta^2}
\end{align*}
using the Rankin-Selberg theory. 

Let $r$ be the smallest natural number such that $M^r\geq X/H$. Using Lemma \ref{MVT} for the polynomial $M(1/2+t)^r$ and applying the Cauchy-Schwarz inequality yields
\begin{align*}
|\mathcal U|M^{2r(1/2-\delta)}&\leq\int\limits_0^{X/H}\left|M\left(\frac12+it\right)\right|^{2r}\mathrm d t\\
&\ll\left(\frac XH+M^r\right)\sum_{n\asymp M^r}\left(\sum_{\substack{n=m_1\cdots m_r\\
m_i\sim M}}\frac{|A(m_1,1)\cdots A(m_r,1)|}{(m_1\cdots m_r)^{1/2}}\right)^2\\
&\ll M^r\sum_{n\asymp M^r}d_r(n)\left(\sum_{\substack{n=m_1\cdots m_r\\
m_i\sim M}}\frac{|A(m_1,1)\cdots A(m_r,1)|^2}{m_1\cdots m_r}\right)\\
&\ll M^rX^{\delta^2}\frac1{M^r}\left(\sum_{m\sim M}|A(m,1)|^2\right)^r\\
&\ll M^rX^{\delta^2},
\end{align*}
from which we infer that $|\mathcal U|\ll X^{4\delta}$.

Thus, estimating the $M$-polynomial trivially and the $D$-polynomial by (\ref{d-estimate}) yields
\begin{align*}
\int\limits_{\mathcal U}\left|M\left(\frac12+it\right)K\left(\frac12+it\right)D\left(\frac12+it\right)\right|^2\mathrm d t\ll X^{7\delta}\sup_{|t|\leq X/H}\left|K\left(\frac12+it\right)\right|^2.
\end{align*}
Combining all the estimates above gives
\begin{align*}
&\int\limits_X^{2X}\left|\sum_{\substack{x\leq mk\leq x+H\\
m\sim M\\
(m,k)=1}} A(mk,1)\right|^2\mathrm d x\\
&\ll H^2\left(X^{1-\delta^2}+X^{7\delta}\sup_{|t|\leq X/H}\left|K\left(\frac12+it\right)\right|^2\right)\\
&\qquad+X^2\sup_{\frac XH\leq T\leq U}\frac1{T^2}\int\limits_T^{2T}\left|M\left(\frac 12+it\right)K\left(\frac12+it\right)D\left(\frac12+it\right)\right|^2\,\mathrm d t.
\end{align*}
Using the bound $K(1/2+it)\ll_\varepsilon (3+|t|)^{\theta+\varepsilon}$ (which follows from the same bound for $L(1/2+it)$) the first two terms on the right-hand side are both $\ll H^2X^{1-\delta^2}$ for $H\geq X^{1-1/2\theta+10\delta}$ when $\theta>1/2$ (and respectively for $H\geq X^{2\delta}$ when $0\leq\theta\leq 1/2)$. The penultimate term can be treated by Lemma \ref{MVT} similarly as before by splitting the range of integration into parts:
\begin{align*}
&X^2\sup_{\frac XH\leq T\leq U}\frac1{T^2}\int\limits_T^{2T}\left|M\left(\frac 12+it\right)K\left(\frac12+it\right)D\left(\frac12+it\right)\right|^2\,\mathrm d t\\
&\ll X^2\sup_{X/H\leq T\leq U} \frac{M^{1-2\delta}(\log X)^3}T+X^2\sup_{X/H\leq T\leq U}\frac{XM^{-2\delta}(\log X)^3}{T^2}+X^2\sup_{X/H\leq T\leq U}\frac{X^{7\delta}T^{6/5}(\log X)^2}{T^2}\\
&\ll HX^{1+\delta-2\delta^2}(\log X)^3+X^{1-2\delta^2}H^2(\log X)^3+X^{6/5+7\delta}H^{4/5}\\
&\ll H^2X^{1-\delta^2}
\end{align*}
for $H$ in the same range as before. This finishes the proof. \qed

%So in total 
%\begin{align*}
%\int\limits_X^{2X}\left|\sum_{\substack{x\leq mk\leq x+H\\
%m\sim M\\
%(m,k)=1}} A(mk,1)\right|^2\mathrm d x\ll H^2X^{1-\delta^2}
%\end{align*}
%when $H\geq X^{1/6+10\delta}$, as desired \qed

\subsection{Proof of Lemma \ref{lemma 2}}

Let $\varepsilon>0$ be small but fixed. Observe that the proof of Theorem 4.1. in \cite{JiangLu} gives\footnote{Notice that there is a gap in the proof of \cite[Theorem 4.1.]{JiangLu} as the asymptotics (4.3) does not follow from the display above it as claimed. However, (4.3) follows in the case of self-dual form from the zero-free region of the corresponding L-function and the method of de la Vall\'ee-Poussin \cite[Chapter 5]{Iwaniec-Kowalski}, see e.g. \cite[Corollary 1.2.]{Liu-Wang-Ye}} the bound
\begin{align}\label{Jiang-Lu-estimate}
\sum_{\ell\ll\log X}\sum_{p^\ell\sim X}|A(p^\ell,1)|\gg X^{1-\varepsilon/2}.
\end{align}
Note that 
\begin{align}\label{separation}
\sum_{\substack{mk\sim X\\
m\sim M\\
(m,k)=1}}|A(mk,1)|&=\sum_{m\sim M}|A(m,1)|\sum_{\substack{k\sim X/m\\
(m,k)=1}}|A(k,1)|
\end{align}
by the multiplicativity of Hecke eigenvalues. Let us treat the inner sum first. By restricting to prime powers we have
\begin{align*}
\sum_{\substack{k\sim X/m\\
(m,k)=1}}|A(k,1)|&\geq\sum_{\ell\ll\log(X/M)}\sum_{\substack{p^\ell\sim X/M\\
p\nmid m}}|A(p^\ell,1)|\\
&=\sum_{\ell\ll\log(X/M)}\sum_{p^\ell\sim X/M}|A(p^\ell,1)|-\sum_{\ell\ll\log(X/M)}\sum_{\substack{p^\ell\sim X/M\\
p|m}}|A(p^\ell,1)|.
\end{align*}
The first term on the right-hand side is $\gg (X/M)^{1-\varepsilon/2}$ by (\ref{Jiang-Lu-estimate}). The other term on the right-hand side is
\begin{align*}
\ll\left(\frac XM\right)^{1/2}\sum_{\ell\ll\log X}\sum_{\substack{p^\ell\sim X/M\\
p|m}}1\ll \left(\frac XM\right)^{1/2+\varepsilon}
\end{align*}
by estimating the Hecke eigenvalues pointwise.

We conclude that 
\begin{align*}
\sum_{\substack{k\sim X/m\\
(m,k)=1}}|A(k,1)|\gg\left(\frac XM\right)^{1-\varepsilon/2}
\end{align*}
uniformly for $m\sim M$.

This and (\ref{separation}) yield
\begin{align}\label{long-lower-bound}
\sum_{\substack{mk\sim X\\
m\sim M\\
(m,k)=1}}|A(mk,1)|&\gg \left(\frac XM\right)^{1-\varepsilon/2}\sum_{\ell\ll\log M}\sum_{p^\ell\sim M}|A(p^\ell,1)|\gg X^{1-\varepsilon/2}.
\end{align}
By the Rankin-Selberg theory we have
\begin{align*}
\sum_{\substack{mk\sim X\\
m\sim M\\
(m,k)=1\\
|A(mk,1)|> X^{\varepsilon}}}|A(mk,1)|< X^{-\varepsilon}\sum_{\substack{mk\sim X\\
m\sim M\\
(m,k)=1}}|A(mk,1)|^2\ll X^{1-\varepsilon}.
\end{align*}
Thus 
\begin{align*}
\sum_{\substack{mk\sim X\\
m\sim M\\
(m,k)=1\\
|A(mk,1)|\leq X^{\varepsilon}}}|A(mk,1)|\gg X^{1-\varepsilon/2}.
\end{align*}
We conclude that
\begin{align}\label{long-short-estimate}
X^{1-\varepsilon/2}\ll \sum_{\substack{mk\sim X\\
m\sim M\\
(m,k)=1\\
|A(mk,1)|\leq X^{\varepsilon}}}|A(mk,1)|\leq\frac 1H\int\limits_X^{2X}\sum_{\substack{x\leq mk\leq x+H\\
m\sim M\\
(m,k)=1}}
|A(mk,1)|\,\mathrm d x.
\end{align}
Let us define
\begin{align*}
\mathcal S:=\left\{x\sim X:\, \sum_{\substack{x\leq mk\leq x+H\\
m\sim M\\
(m,k)=1}}|A(mk,1)|\leq \frac H{X^{\varepsilon}}\right\}.
\end{align*}
Points $x\in \mathcal S$ contribute 
\begin{align*}
\ll \frac 1H\cdot X\cdot\frac H{X^{\varepsilon}}\ll X^{1-\varepsilon}
\end{align*}
to the right-hand side of (\ref{long-short-estimate}). This implies that 
\begin{align*}
X^{1-\varepsilon/2}\ll \frac 1H|\mathcal S^c|\cdot HX^{\varepsilon},
\end{align*}
i.e.
\begin{align*}
|\mathcal S^c|\gg X^{1-3\varepsilon/2}.
\end{align*}
For this many $x\sim X$ we have
\begin{align*}
\sum_{\substack{x\leq mk\leq x+H\\
m\sim M\\(m,k)=1}}|A(mk,1)|>\frac H{X^{\varepsilon}}, 
\end{align*}
which concludes the proof.  \qed

\section{Proof of Theorem \ref{Hecke-aa}}

It follows from part (i) of Theorem \ref{merkkivaihtelu} that there is a $m$ so that $A(m,1)<0$ and so by Lemma \ref{positprop} it suffices to show that for all but $\varepsilon\#\mathcal H_T$ forms $\phi\in\mathcal H_T$ we have $A_\phi(m,1)\neq 0 $ for a positive proportion of integers $m$. By \cite[Th\'eor\`eme 14]{Serre} this holds if 
\begin{align*}
\sum_{\substack{p\\
A_\phi(p,1)=0}}\frac1p<\infty.
\end{align*}
Recall that self-dual Maass cusp forms for the group $\mathrm{SL}_3(\mathbb Z)$ are symmetric square lifts of non-dihedral cusp forms for the group $\mathrm{SL}_2(\mathbb Z)$. Furthermore, if a $\mathrm{GL}_3$ form $\phi$ is a lift of a $\mathrm{GL}_2$ form $g$, then the Fourier coefficients at primes are related by $A_\phi(p,1)=\lambda_g(p)^2-1$. It follows immediately from Lemma \ref{Lau-Wang} that 
\begin{align*}
\frac{\#\{g\in X_T:\,\sqrt{1-p^{-\delta}}<|\lambda_g(p)|<\sqrt{1+p^{-\delta}}\}}{\# X_T}\ll \frac 1{p^{\delta}}+\frac{\log p}{\log T}
\end{align*}
for any $\delta>0$, and consequently 
\begin{align*}
\frac{\#\{\phi\in\mathcal H_T:\,|A_\phi(p,1)|<p^{-\delta}\}}{\#\mathcal H_T}\ll \frac 1{p^{\delta}}+\frac{\log p}{\log T}
\end{align*}
as dihedral forms have zero density among the classical Maass cusp forms.

This gives
\begin{align*}
\sum_{\phi\in\mathcal H_T}\sum_{\substack{p\leq X\\
A_\phi(p,1)=0}}\frac1p\ll \#\mathcal H_T\sum_{p\leq X}\frac1p\left(\frac1{p^\delta}+\frac{\log p}{\log T}\right)\ll \#\mathcal H_T
\end{align*}
for any $X>1$ such that $\log X\ll\log T$ when $T$ is sufficiently large. This implies that there is an absolute constant $C>0$ such that, for all but at most $\varepsilon\#\mathcal H_T$ forms in $\mathcal H_T$, we have
\begin{align*}
\sum_{\substack{p\leq X\\
A_\phi(p,1)=0}}\frac 1p\leq\frac C{2\varepsilon}.
\end{align*} 
Letting $T\longrightarrow\infty$ implies the result by the above discussion if $X$ is chosen so that $X\longrightarrow\infty$ along with $T$. \qed
 
\section{Proof of Theorem \ref{aa}}

We will need the following effective Sato-Tate result for the coefficients $A(m,m)$. 
\begin{theorem}\label{Sato-Tate}
Let $p$ be a prime so that $\log p\ll \log T$ and $[\alpha,\beta]\subset[-1,8]$. Let $h_T(\nu)$ be the weight function defined in (\ref{weight-function}). Then we have
\begin{align*}
\frac1{\int_{\Re\nu=0}h_T(\nu)\text{spec}(\nu)\mathrm d \nu}\sum_{\substack{\phi\\
A_\phi(p,p)\in[\alpha,\beta]}}h_T(\nu_\phi)=\int_{T_0/W}1_{[\alpha,\beta]}(S_{1,1}(\sigma))\,\mathrm d \mu_p(\sigma)+O\left(\left(\frac{\log p}{\log T}\right)^{1/5}\right),
\end{align*}
where $S_{1,1}(\sigma)$ is the Schur polynomial defined in (\ref{Schur}) and $\mathrm d \mu_p$ is the $p$-adic Plancherel measure for the group $\mathrm{SL}_3(\mathbb C)$ given in (\ref{p-adic-plancherel}). 
\end{theorem}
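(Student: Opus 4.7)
The plan is a Weyl-style equidistribution argument driven by Lemma \ref{asymptotic} and the identity (\ref{Kato}), with Beurling-Selberg polynomials as test functions. By Shintani's formula (\ref{Shintani}) we have $A_\phi(p,p) = S_{1,1}(\alpha_\phi(p))$, and $S_{1,1}$ is a $W$-invariant polynomial of total degree $2$ taking real values in $[-1,8]$ on the tempered part of $T_0/W$; the goal is therefore to show that the $h_T$-weighted empirical measure of $\alpha_\phi(p)$ on $T_0/W$ is close, after pushing forward via $S_{1,1}$, to the push-forward of $\mathrm d \mu_p$.

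First I would invoke the Beurling-Selberg construction: for every integer $L \geq 1$ there exist real polynomials $P_L^\pm$ of degree at most $L$ satisfying $P_L^-(x) \leq 1_{[\alpha,\beta]}(x) \leq P_L^+(x)$ on $\mathbb R$ and $\int (P_L^+ - P_L^-)\mathrm d x \ll 1/L$. Since $S_{1,1}$ pushes $\mathrm d \mu_p$ to an absolutely continuous measure on $[-1,8]$ with density bounded uniformly in $p$, this gives
\begin{align*}
\int_{T_0/W}\bigl(P_L^+ - P_L^-\bigr)\bigl(S_{1,1}(\sigma)\bigr)\,\mathrm d \mu_p(\sigma)\ll \frac{1}{L}.
\end{align*}
Because $S_{1,1}$ has degree $2$, the composition $P_L^\pm \circ S_{1,1}$ is a $W$-invariant polynomial of degree at most $2L$, and expands uniquely in the Schur basis as
\begin{align*}
P_L^\pm(S_{1,1}(\sigma)) = \sum_{\ell_1+\ell_2 \leq 2L} c^\pm_{\ell_1,\ell_2}\, S_{\ell_1,\ell_2}(\sigma),
\end{align*}
with coefficients controlled by those of $P_L^\pm$ together with Littlewood-Richardson coefficients coming from iterated products of $S_{1,1}$. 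Applying Lemma \ref{asymptotic} term by term and using (\ref{Kato}) to identify $\mathfrak M^0_{\aleph(\ell_2,\ell_1)}(p^{-1})$ with $\int_{T_0/W} S_{\ell_1,\ell_2}\,\mathrm d \mu_p$ yields
\begin{align*}
\sum_\phi P_L^\pm(A_\phi(p,p))\, h_T(\nu_\phi) = \Bigl(\int_{\Re \nu = 0} h_T(\nu)\,\text{spec}(\nu)\,\mathrm d \nu\Bigr)\int_{T_0/W} P_L^\pm(S_{1,1}(\sigma))\,\mathrm d \mu_p(\sigma) + O\bigl(T^{14/3+\varepsilon} p^{L+\varepsilon} C_L\bigr),
\end{align*}
where $C_L := \sum_{\ell_1+\ell_2 \leq 2L}|c^\pm_{\ell_1,\ell_2}|$.

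Finally I would sandwich $\sum_{A_\phi(p,p)\in[\alpha,\beta]} h_T(\nu_\phi)$ between the $P_L^\pm$ sums, normalise by $\int h_T\,\text{spec}\, \asymp T^{5-2\eta}$ (Remark \ref{number-of-forms}), and optimise $L$. The combined error takes the shape $L^{-1} + T^{-1/3+O(\eta)+\varepsilon}\, p^L\, C_L$; choosing $L$ proportional to a small multiple of $(\log T)/(\log p)$ balances these into a power of $\log p/\log T$, the precise exponent $1/5$ being dictated by the polynomial-in-$L$ growth of $C_L$ and the $\eta,\varepsilon$ losses. The forms violating the Ramanujan-Petersson conjecture at $p$, whose $A_\phi(p,p)$ may lie outside $[-1,8]$, contribute a harmless $O((\log p/\log T)^3)$ by the exceptional bound (\ref{exceptions}).

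\textbf{Main obstacle.} The principal technical point is obtaining a sharp polynomial-in-$L$ bound on $C_L$. The Beurling-Selberg coefficients of $P_L^\pm$ can be as large as $C^L$, and iterated Littlewood-Richardson expansion of powers of $S_{1,1}$ introduces further combinatorial inflation, so the final exponent $1/5$ is pinned down by exactly how much of this coefficient growth can be offset by the $T^{-1/3} p^L$ savings coming from Lemma \ref{asymptotic}.
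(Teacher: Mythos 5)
Your overall plan — sandwich $1_{[\alpha,\beta]}$ by polynomials, push the composition with $S_{1,1}$ through Lemma \ref{asymptotic}, identify the main term via Kato's formula (\ref{Kato}), and optimise the degree — is the same skeleton the paper uses (with Bernstein polynomials of the shifted coefficient $\widetilde A_\phi(p,p)=(A_\phi(p,p)+1)/9\in[0,1]$ in place of your Beurling--Selberg polynomials). But there are two genuine gaps.

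First, and most seriously, your treatment of the exceptional (non-tempered) forms is not adequate. You invoke (\ref{exceptions}) to say their contribution is $O((\log p/\log T)^3)$, but (\ref{exceptions}) only controls the \emph{count} $\sum_{\phi\in X_p}h_T(\nu_\phi)$. In the sandwich argument you need to bound
$\sum_{\phi\in X_p}h_T(\nu_\phi)\,P_L^{\pm}(A_\phi(p,p))$,
and for $\phi\in X_p$ the Satake parameters are not on the unit circle, so $A_\phi(p,p)$ can lie far outside $[-1,8]$ and $P_L^{\pm}(A_\phi(p,p))$ can be of size exponential in $L$. Bounding the number of such forms does nothing to control this. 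The paper resolves exactly this point by applying Cauchy--Schwarz together with Theorem \ref{Matz-Templier} to bound $\sum_\phi h_T(\nu_\phi)|f(\alpha_\phi(p))|^2$, which is why that theorem is stated in the preliminaries; your proposal does not use it at all.

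Second, the Beurling--Selberg statement you quote is false as written: there is no non-zero polynomial $P$ on $\mathbb R$ with $P\geq 1_{[\alpha,\beta]}$ and $\int_{\mathbb R}P<\infty$. Polynomial majorants/minorants with $O(1/L)$ $L^1$-error only exist on a compact interval, and then the majorization holds only on that interval — which is precisely where the first gap bites, because the exceptional forms escape it. Beyond that, the obstacle you flag yourself (controlling $C_L$, the $\ell^1$-mass of the Schur coefficients) is real and is not solved by your outline: it is not at all clear that Beurling--Selberg coefficients expand in the Schur basis with only polynomial-in-$L$ growth. The paper gets around this by a specific choice of approximants — Bernstein polynomials in the variable $\widetilde A_\phi(p,p)$ — whose expansion in the basis $\{A_\phi(p^{\ell_1},p^{\ell_2})\}$ has $\ell^1$-coefficient mass $\leq 1$ by the Blomer--Buttcane--Raulf estimate (\ref{induction-estimate}); this is what makes the error from Lemma \ref{asymptotic} only $O((2p)^n T^{-1/3+\eta'})$ and ultimately pins down the exponent $1/5$ after balancing against the Bernstein approximation error $O(n^{-1/3}\delta^{-2/3})$. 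Without an analogous coefficient bound for your $P_L^{\pm}$, the optimisation step in your outline is not justified.
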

%We will model the Fourier coefficients $A_\phi(p,p)$ by the random variable $X_p=|\text{Tr}(g_p)|^2-1$, where $g_p$ is chosen from the set $T_0/W$ with respect to the $\mathrm{GL}_3$ $p$-adic Plancherel measure. Let us explain why this is a good random model. Recall that $A(p,p)=|A(p,1)|^2-1$ and 
%\begin{align*}
%A_\phi(p,1)=\text{Tr}\begin{pmatrix}\alpha_{p,1} & & \\
%& \alpha_{p,2} & \\
%& & \alpha_{p,3}
%\end{pmatrix},
%\end{align*}
%where $\alpha_{p,1},\alpha_{p,2}$ and $\alpha_{p,3}$ are the Satake parameters of the form $\phi$ at the prime $p$. Now it is clear that the model is sufficient as Satake parameters at a prime $p$ are distributed according to the $p$-adic Plancherel measure.  
\begin{remark}
An easy computation shows that $S_{1,1}(\alpha_1,\alpha_2,\alpha_3)=(\alpha_1+\alpha_2)(\alpha_2+\alpha_3)(\alpha_3+\alpha_1)$ for $(\alpha_1,\alpha_2,\alpha_3)\in T_0$ and that $A_\phi(p,p)=(\alpha_{1,p}(\phi)+\alpha_{2,p}(\phi))(\alpha_{2,p}(\phi)+\alpha_{3,p}(\phi))(\alpha_{3,p}(\phi)+\alpha_{1,p}(\phi))$.
\end{remark}
\begin{proof} 
%Using this we compute that
%\begin{align*}
%\frac1{|\mathcal H_T|}\sum_\phi e^{iuA_\phi(p,1)}h_T(\nu_\phi)&=\frac1{|\mathcal H_T|}\sum_{\phi\in\mathcal H_T}\sum_{\ell=1}^\infty\frac{(iu)^\ell A_\phi(p,1)^\ell h_T(\nu_\phi)}{\ell!}\\
%&=\frac1{|\mathcal H_T|}\sum_{\ell=1}^\infty\sum_{\ell_1+\ell_2\leq\ell}\frac{(iu)^\ell}{\ell !}\cdot\alpha_{\ell_1,\ell_2,p}\sum_{\phi}A_\phi(p^{\ell_1},p^{\ell_2})h_T(\nu_\phi)\\
%&=\sum_{\ell=1}^\infty\frac{(iu)^\ell}{\ell !}\sum_{\ell_1+\ell_2\leq\ell}\alpha_{\ell_1,\ell_2}M_{\ell_1,\ell_2}(p)\\
%&=\sum_{\ell=1}^\infty\frac{(iu)^\ell}{\ell !}\int_{(S^1)^3/\mathfrak S_3}\sum_{\ell_1+\ell_2\leq\ell}\alpha_{\ell_1,\ell_2}s_{\ell_1,\ell_2}\mathrm d\mu_p \\
%&=\sum_{\ell=1}^\infty\frac{(iu)^\ell}{\ell !}\int_{(S^1)^3/\mathfrak S_3}s_{\ell_1,\ell_2}(p)^\ell\,\mathrm d\mu_p\\
%&=\sum_{\ell=1}^\infty\frac{(iu)^\ell}{\ell !}\mathbb E X_p^\ell\\
%&=\mathbb E e^{iuX_p},
%\end{align*}
%where we have used the fact that 
%\begin{align*}
%M_{\ell_1,\ell_2}(p)=\int_{(S^1)^3/\mathfrak S_3}s_{\ell_1,\ell_2}\mathrm d\mu_p.
%\end{align*}

%Now, applying the Berry-Esseen theorem we conclude that
%\begin{align*}
%\mathbb P(|A_\phi(p,1)|<p^{-\delta})=\mathbb P(|X_p|<p^{-\delta})+O( ). 
%\end{align*}

%Hence,
%\begin{align*}
%\frac{\#\{\phi\in\mathcal H_T:\,|A_\phi(p,1)|<p^{-\delta}\}}{|\mathcal H_T|}&\ll\frac{\sum_\phi 1_{|A_\phi(p,1)|<p^{-\delta}}h(\nu_\phi)}%{\sum_\phi h(\nu_\phi)}\\
%&\ll\int +O( ).
%\end{align*}

Let $0<\delta<(\beta-\alpha)/2$ be a small parameter optimised later. Let $w_1$ be a non-negative weight function, which is supported in the interval $[(\alpha-\delta+1)/9,(\beta+\delta+1)/9]$, identically one in $[(\alpha+1)/9,(\beta+1)/9]$, and satisfies $\|w_1'\|_\infty\ll 1/\delta$. It is well-known that such smooth function can be approximated by the means of Bernstein polynomials \cite{Bernstein}. Indeed, we have
\begin{align}\label{Bernstein-approx}
w_1\left(x\right)=\sum_{j=0}^n w_1\left(\frac j{n}\right)\binom nj x^j(1-x)^{n-j}+O\left(\frac{\|w_1\|_\infty^{1/3}}{n^{1/3}\delta^{2/3}}\right)
\end{align}
for any $x\in[0,1]$, where $n$ is a natural number specified later.

For simplicity let us set
\begin{align*}
\widetilde A_\phi(p,p):=\frac{A_\phi(p,p)+1}9.
\end{align*}
Observe that under the Ramanujan-Petersson conjecture we have $\widetilde A_\phi(p,p)\in[0,1]$.
 
Note also that
\begin{align}\label{weight-majorization}
1_{[\alpha,\beta]}(A_\phi(p,p))\leq w_1\left(\widetilde A_\phi(p,p)\right).
\end{align}
It follows from the Hecke relations that 
\begin{align*}
\left(\widetilde A_\phi(p,p)\right)^\ell&=\frac1{9^\ell}|A_\phi(p,1)|^{2\ell}\\
&=\sum_{\ell_1+\ell_2\leq 2\ell}\alpha_{\ell_1,\ell_2,\ell} A_\phi(p^{\ell_1},p^{\ell_2})
\end{align*}
for some real numbers $\alpha_{\ell_1,\ell_2,\ell}$. By induction we have
\begin{align}\label{induction-estimate}
\sum_{\ell_1+\ell_2\leq 2\ell}|\alpha_{\ell_1,\ell_2,\ell}|\leq 1,
\end{align} 
see \cite[Estimate (14)]{Blomer-Buttcane-Raulf}.

Hence
\begin{align}\label{upper-bound-january}
&\frac1{\int_{\Re\nu=0}h_T(\nu)\text{spec}(\nu)\mathrm d \nu}\sum_{\substack{\phi\\
A_\phi(p,p)\in[\alpha,\beta]}}h_T(\nu_\phi)\nonumber\\
&\leq\frac1{\int_{\Re\nu=0}h_T(\nu)\text{spec}(\nu)\mathrm d \nu}\sum_\phi w_1\left(\widetilde A_\phi(p,p)\right)h_T(\nu_\phi)\nonumber\\
&=\frac1{\int_{\Re\nu=0}h_T(\nu)\text{spec}(\nu)\mathrm d \nu}\sum_\phi\sum_{j=0}^n w_1\left(\frac j{n}\right)\binom nj \widetilde A_\phi(p,p)^j(1-\widetilde A_\phi(p,p))^{n-j}h_T(\nu_\phi)\nonumber\\
&\qquad\qquad\qquad+O(n^{-1/3}\delta^{-2/3})+O\left(\left(\frac{\log p}{\log T}\right)^{3/2}\right)\nonumber\\
&=\frac1{\int_{\Re\nu=0}h_T(\nu)\text{spec}(\nu)\mathrm d \nu}\sum_{j=0}^n w_1\left(\frac j{n}\right)\binom nj\sum_\phi\sum_{k=0}^{n-j}\binom{n-j}k (-1)^k \widetilde A_\phi(p,p)^{n-k}h_T(\nu_\phi)\nonumber\\
&\qquad\qquad\qquad+O(n^{-1/3}\delta^{-2/3})+O\left(\left(\frac{\log p}{\log T}\right)^{3/2}\right)\nonumber\\
&=\frac1{\int_{\Re\nu=0}h_T(\nu)\text{spec}(\nu)\mathrm d \nu}\sum_{j=0}^n w_1\left(\frac j{n}\right)\binom nj\sum_{k=0}^{n-j}\binom{n-j}k (-1)^k\nonumber\\
&\qquad\qquad\quad\sum_{\ell_1+\ell_2\leq 2(n-k)}\alpha_{\ell_1,\ell_2,n-k}\sum_\phi A_\phi\left(p^{\ell_1},p^{\ell_2}\right)h_T(\nu_\phi)+O(n^{-1/3}\delta^{-2/3})+O\left(\left(\frac{\log p}{\log T}\right)^{3/2}\right).
\end{align}
%Here the first step follows using (\ref{weight-majorization}) for forms satisfying the Ramanujan-Petersson conjecture and the contribution of forms not satisfying Ramanujan-Petersson conjecture is estimated by theorem due to Lau, Ho Ng, and Wang (\ref{exceptions}). 

\noindent In the second step we use (\ref{Bernstein-approx}) for forms satisfying the Ramanujan-Petersson conjecture at prime $p$, but the treatment of the rest of the forms requires additional explanation. Let us define the polynomial $f:\mathbb C^3\longrightarrow\mathbb C$ by
\begin{align*}
&f(x_1,x_2,x_3)\\
&\quad:=\sum_{j=0}^n w_1\left(\frac j{n}\right)\binom nj((x_1+x_2)(x_2+x_3)(x_3+x_1))^j(1-(x_1+x_2)(x_2+x_3)(x_3+x_1))^{n-j}.
\end{align*}
Clearly $f\in\mathbb C[x_1^\pm,x_2^\pm,x_3^\pm]^W$. Note that this is well-defined regardless whether or not $x_1,x_2,x_3\in S^1$.

Recall that $X_p$ is the set of Hecke-Maass cusp forms for the group $\mathrm{SL}_3(\mathbb Z)$ not satisfying the Ramanujan-Petersson conjecture at $p$. We have to estimate the sums
\begin{align*}
\frac1{\int_{\Re\nu=0}h_T(\nu)\text{spec}(\nu)\mathrm d \nu}\sum_{\phi\in X_p}h_T(\nu_\phi)\quad\text{and}\quad\frac1{\int_{\Re\nu=0}h_T(\nu)\text{spec}(\nu)\mathrm d \nu}\sum_{\phi\in X_p}h_T(\nu_\phi)|f(\alpha_\phi(p))|.
\end{align*}
The first sum is $\ll (\log p/\log T)^{3}$ by (\ref{exceptions}). By the Cauchy-Schwarz inequality the second sum is 
\begin{align*}
&\ll\left(\frac1{\int_{\Re\nu=0}h_T(\nu)\text{spec}(\nu)\mathrm d \nu}\sum_{\phi\in X_p}h_T(\nu_\phi)\right)^{1/2}\left(\frac1{\int_{\Re\nu=0}h_T(\nu)\text{spec}(\nu)\mathrm d \nu}\sum_{\phi}h_T(\nu_\phi)|f(\alpha_\phi(p))|^2\right)^{1/2}.
\end{align*}
The second factor on the right-hand side is
\begin{align*}
&\ll\left(\int_{T_0/W}|f(\sigma)|^2\,\mathrm d\mu_p(\sigma)+O\left(T^{-1/2}\left(4p^{A}\right)^{2n}\right)\right)^{1/2}\\
&\ll 1+T^{-1/4}p^{An}\\
&\ll 1
\end{align*}
using Theorem \ref{Matz-Templier} and the choice of $n$ we make below, justifying the second step in (\ref{upper-bound-january}).  

Returning to (\ref{upper-bound-january}), by Lemma \ref{asymptotic} the innermost sum is
\begin{align}\label{spectral-sum}
\mathfrak M_{\aleph(\ell_2,\ell_1)}^0(p^{-1})\left(\int_{\Re(\nu)=0}h_T(\nu)\text{spec}(\nu)\,\mathrm d\nu\right)+O_\varepsilon\left(T^{14/3+\varepsilon}p^{(\ell_1+\ell_2)/2+\varepsilon}\right).
\end{align}
Let us first concentrate on the contribution of the main term. The latter factor in that term is cancelled by the denominator in (\ref{upper-bound-january}). By using Kato's formula (\ref{Kato}) the contribution from the main term
\begin{align*}
&\sum_{j=0}^n w_1\left(\frac j{n}\right)\binom nj\sum_{k=0}^{n-j}\binom{n-j}k (-1)^k\\
&\qquad\qquad\sum_{\ell_1+\ell_2\leq 2(n-k)}\alpha_{\ell_1,\ell_2,n-k}\int_{T_0/W}S_{\ell_1,\ell_2}(\sigma)\,\mathrm d \mu_p(\sigma)+O(n^{-1/3}\delta^{-2/3})+O\left(\left(\frac{\log p}{\log T}\right)^{3/2}\right).
\end{align*} 
By the relation $A_\phi(p^{\ell_1},p^{\ell_2})=S_{\ell_1,\ell_2}(\alpha_{1,p}(\phi),\alpha_{2,p}(\phi),\alpha_{3,p}(\phi))$ it follows that 
\begin{align*}
\left(\widetilde S_{1,1}(\sigma)\right)^\ell=\sum_{\ell_1+\ell_2\leq2\ell}\alpha_{\ell_1,\ell_2,\ell} S_{\ell_1,\ell_2}(\sigma)
\end{align*}
for any $\sigma\in T_0$, where we set
\begin{align*}
\widetilde S_{1,1}(\sigma):=\frac{S_{1,1}(\sigma)+1}9.
\end{align*}
Note that $\widetilde S_{1,1}(\sigma)\in[0,1]$ for $\sigma\in T_0$.

Therefore the main term above is 
\begin{align*}
&\int_{T_0/W}\sum_{j=0}^n w_1\left(\frac j{n}\right)\binom nj \widetilde S_{1,1}(\sigma)^j(1-\widetilde S_{1,1}(\sigma))^{n-j}\,\mathrm d \mu_p(\sigma)\\
&=\int_{T_0/W}w_1\left(\widetilde S_{1,1}(\sigma)\right)\,\mathrm d \mu_p(\sigma)+O\left(n^{-1/3}\delta^{-2/3}\right)+O\left(\left(\frac{\log p}{\log T}\right)^{3/2}\right).
\end{align*}
The contribution of the error term in (\ref{spectral-sum}) is 
\begin{align*}
&\ll\frac1{\int_{\Re\nu=0}h_T(\nu)\text{spec}(\nu)\mathrm d \nu}\sum_{j=0}^n w_1\left(\frac jn\right)\binom nj\sum_{k=0}^{n-j}(-1)^{n-j-k}\binom{n-j}k\sum_{\ell_1+\ell_2\leq 2(n-k)}\alpha_{\ell_1,\ell_2,n-k}T^{14/3+\varepsilon}p^{(\ell_1+\ell_2)/2+\varepsilon}\\
&\ll \frac{T^{14/3+\varepsilon}}{\int_{\Re\nu=0}h_T(\nu)\text{spec}(\nu)\mathrm d \nu}\sum_{j=0}^n w_1\left(\frac jn\right)\binom nj\sum_{k=0}^{n-j}\binom{n-j}k p^{(n-k)}\sum_{\ell_1+\ell_2\leq 2(n-k)}|\alpha_{\ell_1,\ell_2,n-k}|\\
&\ll \frac{T^{14/3+\varepsilon}}{\int_{\Re\nu=0}h_T(\nu)\text{spec}(\nu)\mathrm d \nu}\sum_{j=0}^n w_1\left(\frac jn\right)\binom nj p^{j}(1+p)^{n-j}\sum_{\ell_1+\ell_2\leq 2(n-k)}|\alpha_{\ell_1,\ell_2,n-k}|\\
&\ll \frac{T^{14/3+\varepsilon}}{\int_{\Re\nu=0}h_T(\nu)\text{spec}(\nu)\mathrm d \nu} (2p)^{n}\\
&\ll \frac{(2p)^n}{T^{1/3-\eta'}}
\end{align*}
for some very small $\eta'>0$, where in the penultimate step we have used the estimate (\ref{induction-estimate}).

Thus we have showed that 
\begin{align*}
&\frac1{\int_{\Re\nu=0}h_T(\nu)\text{spec}(\nu)\mathrm d \nu}\sum_{\substack{\phi\\
A_\phi(p,p)\in[\alpha,\beta]}}h_T(\nu_\phi)\\
&\qquad\leq \int_{T_0/W}w_1\left(\widetilde S_{1,1}(\sigma)\right)\,\mathrm d \mu_p(\sigma)+O\left(n^{-1/3}\delta^{-2/3}+\frac{(2p)^{n}}{T^{1/3-\eta'}}+\left(\frac{\log p}{\log T}\right)^{3/2}\right).
\end{align*}
Similar computation shows that  
\begin{align*}
&\int_{T_0/W}w_2\left(\widetilde S_{1,1}(\sigma)\right)\,\mathrm d \mu_p(\sigma)+O\left(n^{-1/3}\delta^{-2/3}+\frac{(2p)^{n}}{T^{1/3-\eta'}}+\left(\frac{\log p}{\log T}\right)^{3/2}\right)\\
&\qquad\leq\frac1{\int_{\Re\nu=0}h_T(\nu)\text{spec}(\nu)\mathrm d \nu}\sum_{\substack{\phi\\
A_\phi(p,p)\in[\alpha,\beta]}}h_T(\nu_\phi)
\end{align*}
%where
%\begin{align*}
%\widetilde S_{p,\alpha,\beta,\delta}^{(2)}(\sigma):=\frac{S_{1,1}(\sigma)-\alpha}{\beta-\alpha},
%\end{align*}
for a non-negative weight function $w_2$, which is supported in the interval $[(\alpha+1)/9,(\beta+1)/9]$, identically one in $[(\alpha+\delta+1)/9,(\beta-\delta+1)/9]$, and satisfies $\|w_2'\|_\infty\ll 1/\delta$.

Now it is enough to observe that by (\ref{explicit-formula}), 
\begin{align*}
\int_{T_0/W}\left(1_{[\alpha,\beta]}\left(S_{1,1}(\sigma)\right)-w_i\left(\widetilde S_{1,1}(\sigma)\right)\right)\,\mathrm d \mu_p(\sigma)\ll\delta
\end{align*}
uniformly on $p$ for $i\in\{1,2\}$, and to optimise the choice of $\delta$ and $n$. Choose $\delta\asymp n^{-1/5}$ and $n=\large\lfloor \frac{W\left(5 T^{5/3-5\eta'}\log(2p)\right)}{8A\log (2p)}\large\rfloor$, where $W$ is the Lambert $W$-function. These choices, using the estimate $W(x)\asymp \log x$, yield
\begin{align*}
\delta\asymp\frac1{n^{1/3}\delta^{2/3}}\asymp\frac{(2p)^{n}}{T^{1/3-\eta'}}\asymp\left(\frac{\log p}{\log T}\right)^{1/5}
\end{align*}
%(we can assume that there exists $\theta>0$ so that $\log p\leq\theta\log T$ as otherwise statement of the theorem is trivially true) 
and so we have shown that
\begin{align*}
\frac1{\int_{\Re\nu=0}h_T(\nu)\text{spec}(\nu)\mathrm d \nu}\sum_{\substack{\phi\\
A_\phi(p,p)\in[\alpha,\beta]}}h_T(\nu_\phi)=\int_{T_0/W}1_{[\alpha,\beta]}(S_{1,1}(\sigma))\,\mathrm d \mu_p(\sigma)+O\left(\left(\frac{\log p}{\log T}\right)^{1/5}\right),
\end{align*}
as desired. 
\end{proof}

\begin{remark}
The exponent $1/5$ in the error term is probably not optimal. Conjecturally it should be one (compare to the theorems of Lau-Wang and Murty-Sinha). We have not tried to optimise the exponent and used direct methods (i.e. Bernstein polynomials) in order to get a result, which is sufficient for the applications given below.  
\end{remark}

\begin{remark}
Actually just an upper bound for the sum
\begin{align*}
\frac1{\int_{\Re\nu=0}h_T(\nu)\text{spec}(\nu)\mathrm d \nu}\sum_{\substack{\phi\\
A_\phi(p,p)\in[\alpha,\beta]}}h_T(\nu_\phi)
\end{align*}
would suffice for our purposes. We have written down an asymptotic formula as it might have applications in other situations. 
\end{remark}

\noindent We are now ready to prove Theorem \ref{aa}. It follows from \cite[Corollary 2]{Xiao-Xu} that for almost all forms $\phi\in\widetilde{\mathcal H}_T$ there exists a natural number $m$ so that $A_\phi(m,m)<0$ and so by Lemma \ref{positprop} it suffices to show that for all but $\varepsilon\#\widetilde{\mathcal H}_T$ forms $\phi\in\widetilde{\mathcal H}_T$ we have that $A_\phi(m,m)\neq 0 $ for a positive proportion of integers $m$. By \cite[Th\'eor\`eme 14]{Serre} it is enough to show that 
\begin{align*}
\sum_{\substack{p\\    
A_\phi(p,p)=0}}\frac1p<\infty
\end{align*}
for all but $\varepsilon\#\widetilde{\mathcal H}_T$ $\phi$ of $\widetilde{\mathcal H}_T$. We will do this by employing our version of effective Sato-Tate theorem. It is just enough to note that by Theorem \ref{Sato-Tate} we have, for any $\delta>0$,
\begin{align*}
\sum_{\substack{\phi\in\widetilde{\mathcal H}_T}}\sum_{\substack{p\leq X\\
A_\phi(p,p)=0}}\frac1p &\ll\sum_{p\leq X}\frac1p\sum_{\substack{\phi\\
|A_\phi(p,p)|\leq p^{-\delta}}}h_T(\nu_\phi) \\
&\ll\left(\sum_\phi h_T(\nu_\phi)\right) \sum_{p\leq X}\frac1p\left(\frac1{p^\delta}+\left(\frac{\log p}{\log T}\right)^{1/5}\right)\\
&\ll \#\widetilde{\mathcal H}_T
\end{align*}
for any $X>0$ such that, say, $\log X\ll(\log T)^{1/5}$ provided that $T>0$ is sufficiently large. Here the first step follows from the fact that $h_T(\nu_\phi)\gg 1$ for $\phi\in\widetilde{\mathcal H}_T$. This implies that there is an absolute constant $C>0$ such that, for all but at most $\varepsilon\#\widetilde{\mathcal H}_T$ forms in $\widetilde{\mathcal H}_T$, we have
\begin{align*}
\sum_{\substack{p\leq X\\  
A_\phi(p,p)=0}}\frac 1p\leq\frac C{2\varepsilon}.
\end{align*} 
Letting $T\longrightarrow\infty$ gives the desired result if $X$ is chosen so that $X\longrightarrow\infty$ along with $T$. \qed

\section{Proofs of Theorems \ref{non-vanishing-theorem} and \ref{positive-negative}}

It follows from the multiplicativity of Hecke eigenvalues that
\begin{align}\label{non-vanishing}
\#\{m\leq X\,:\,A(m,1)\neq 0\}\asymp X\prod_{\substack{p\leq X\\
A(p,1)\neq 0}}\left(1-\frac 1p\right)
\end{align}
assuming that $A(p,1)<0$ for a positive proportion of primes $p$ \cite[Lemma 2.4.]{Matomaki-Radziwill1} and so we shall show the latter statement for almost all forms. For this we will follow the approach in \cite{Lester-Matomaki-Radziwill}.

In the self-dual case note that as we assume the Ramanujan-Petersson conjecture for $\mathrm{GL}_2$ Maass cusp forms, it follows that $A(p,1)\in[-1,3]$. Hence, the Hecke relations imply 
\begin{align*}
\sum_{\substack{p\leq X\\
A(p,1)<0}}\frac1p &\geq\sum_{p\leq X}\frac{A(p,1)^2-3A(p,1)}{4p}\\
&\geq\sum_{\log X\leq p\leq X^{1/10000}}\frac{1+A(p,p)-3A(p,1)}{4p}\\
&=\frac{1+o(1)}4\log\log X+\sum_{\log X\leq p\leq X^{1/10000}}\frac{A(p,p)-3A(p,1)}{4p}.
\end{align*}
Dividing the sum on the right-hand side into dyadic intervals it follows immediately from Lemma \ref{self-dual-large-sieve} that the sum is $o(\log\log X)$ for almost all self-dual forms if $X\longrightarrow\infty$ as $T\longrightarrow\infty$, concluding the proof in this case.  

For the coefficients $A(p,p)$ we note that $A(p,p)\in[-1,8]$. This follows from the identity $A(p,p)=|A(p,1)|^2-1$ at primes $p$ and the fact that we assume the Ramanujan-Petersson conjecture for $\mathrm{GL}_3$ Maass cusp forms. Using the Hecke relations we have
\begin{align*}
A(p,p)^2-8A(p,p)%&=(|A(p,1)|^2-1)^2-8A(p,p)\\
%&=|A(p,1)|^4-2|A(p,1)|^2+1-8A(p,p)\\
%&=|A(p,1)|^4-2(A(p,p)+1)+1-8A(p,p)\\
%&=|A(p,1)|^2(A(p,p)+1)-2(A(p,p)+1)+1-8A(p,p)\\
%&=A(1,p)(A(p,1)A(p,p)+A(p,1))-2(A(p,p)+1)+1-8A(p,p)\\
%&=A(1,p)(A(p^2,p)+A(1,p^2)+A(p,1))-2(A(p,p)+1)+1-8A(p,p)\\
=1-6A(p,p)+A(p^2,p^2)+A(p^3,1)+A(1,p^3).
\end{align*}
This yields
\begin{align*}
\sum_{\substack{p\leq X\\
A(p,p)<0}}\frac1p &\geq\sum_{p\leq X}\frac{A(p,p)^2-8A(p,p)}{9p}\\
&\geq\sum_{\log X\leq p\leq X^{1/10000}}\frac{1-6A(p,p)+A(p^2,p^2)+A(p^3,1)+A(1,p^3)}{9p}\\
&=\frac{1+o(1)}9\log\log X+\sum_{\log X\leq p\leq X^{1/10000}}\frac{-6A(p,p)+A(p^2,p^2)+A(p^3,1)+A(1,p^3)}{9p}.
\end{align*}
Again, by dividing the sum on the right-hand side into dyadic intervals and using Lemma \ref{large-sieve} we see that it is $o(\log\log X)$ for almost all forms as $X\longrightarrow\infty$ as $T\longrightarrow\infty$. This completes the proof of Theorem \ref{non-vanishing-theorem}. \qed

Finally, we prove Theorem \ref{positive-negative}. It follows from \cite[Lemma 2.4.]{Matomaki-Radziwill1} that asymptotically half of the non-zero coefficients $A(m,1)$ are positive (and half are negative) if there exists functions $K,L:\mathbb R_+\longrightarrow\mathbb R_+$ so that $K(X)\longrightarrow 0$ and $L(X)\longrightarrow\infty$ as $X\longrightarrow\infty$, and for which
\begin{align*}
\sum_{\substack{p\geq X\\
A(p,1)=0}}\frac1p\leq K(X)\qquad\text{and}\qquad\sum_{\substack{p\leq X\\
A(p,1)<0}}\frac1p\geq L(X)
\end{align*} 
for every $X\geq 2$. 

Above we have shown that 
\begin{align*}
\sum_{\substack{p\\
A(p,1)=0}}\frac 1p<\infty
\end{align*}
for all but $\varepsilon\#\widetilde{\mathcal H}_T$ forms in $\widetilde{\mathcal H}_T$ as $T\longrightarrow\infty$ and that
\begin{align*}
\sum_{\substack{p\leq X\\
A(p,1)<0}}\frac1p\geq\frac{1+o(1)}4\log\log X+o(\log\log X)
\end{align*}
for almost all forms in $\widetilde{\mathcal H}_T$ when $X\longrightarrow\infty$ along with $T$ assuming the Ramanujan-Petersson conjecture for classical Maass cusp forms. Therefore the desired conclusion follows immediately. The second part for the coefficient $A(m,m)$ follows analogously (assuming the Ramanujan-Petersson conjecture for $\mathrm{GL}_3$ Maass cusp forms) from the corresponding results established above. This concludes the proof. \qed

\section{Acknowledgements}

The author wishes to thank Kaisa Matom\"aki for helpful discussions and for an idea, which led to an improvement on the exponent in part (i) of Theorem \ref{merkkivaihtelu} compared to the previous draft. The author would also like to thank Stephen Lester and Abhishek Saha for useful comments and discussions. This work was supported by the Finnish Cultural Foundation and the Engineering and Physical Sciences Research Council [grant number EP/T028343/1].

\end{document}